\def\bra{\langle}
\def\ket{\rangle}
\def\N{{\mathbb N}}
\def\R{{\mathbb R}}
\def\Z{{\mathbb Z}}
\def\cE{{\mathcal E}}
\def\cI{{\mathcal I}}
 \def\cK{{\mathcal K}}
 \def\bB{{\bf B}}
\def\eqnn{\begin{eqnarray*}}
\def\eeqnn{\end{eqnarray*}}
\def\eqn{\begin{eqnarray}}
\def\eeqn{\end{eqnarray}}
\theoremstyle{plain}
\newtheorem{theorem}{Theorem}[section]
\newtheorem{definition}[theorem]{Definition}
\newtheorem{proposition}[theorem]{Proposition}
\newtheorem{lemma}[theorem]{Lemma}
\theoremstyle{remark}
\newtheorem{remark}{Remark}
\newenvironment{prf}{{\it Proof:}}{$\Box$}
\numberwithin{equation}{section}
\begin{document}

\parskip=8pt
\parindent=0pt
\abovedisplayskip=12pt plus 3pt minus 9pt
\abovedisplayshortskip=0pt plus 3pt
\belowdisplayskip=12pt plus 3pt minus 9pt
\belowdisplayshortskip=7pt plus 3pt minus 4pt


\title[Mittag-Leffler moments to solutions of BTE]
{\bf{On Mittag-Leffler moments for the  Boltzmann equation for hard potentials without cutoff}}

\author[M. Taskovi\'{c}]{Maja Taskovi\'{c}}
\address{M. Taskovi\'{c},
Department of Mathematics, University of Pennsylvania.}
\email{taskovic@math.upenn.edu}

\author[R. J. Alonso]{Ricardo J. Alonso}
\address{R. Alonso,  
Department of Mathematics, PUC-Rio.}
\email{ralonso@mat.puc-rio.br}

\author[I. M. Gamba] {Irene M. Gamba}
\address{I. M. Gamba,
Department of Mathematics, The University of Texas at Austin.}
\email{gamba@math.utexas.edu}

\author[N. Pavlovi\'{c}]{Nata\v{s}a Pavlovi\'{c}}
\address{N. Pavlovi\'{c},  
Department of Mathematics, The University of Texas at Austin.}
\email{natasa@math.utexas.edu}

\begin{abstract}
We establish the $L^1$ weighted propagation properties for solutions of the Boltzmann equation with hard potentials and non-integrable angular components in the collision kernel. Our method identifies null forms by angular averaging and deploys moment estimates of solutions to the Boltzmann equation whose summability is achieved by introducing the new concept of Mittag-Leffler moments - extensions of $L^1$ exponentially weighted norms.
Such $L^1$ weighted norms of solutions to the Boltzmann equation are, both, generated and propagated in time and the characterization of their corresponding Mittag-Leffler weights depends on the angular singularity and potential rates in the collision kernel. These estimates are a fundamental step in order to obtain $L^\infty$ exponentially weighted estimates for solutions of the Boltzmann equation being developed in a follow up work.

\end{abstract}

\maketitle
\section{Introduction} \label{sec-intro}

We study generation and propagation in time of  $L^1$  exponentially weighted norms, referred to as exponential moments, associated to probability density functions that solve the Boltzmann equation \cite{bo872, bo64} modeling the evolution of monoatomic rarefied gases. Binary interactions of gas particles are described by transition rates from before and after such interactions, usually referred to as collision kernels. Such kernels are modeled as a product of potential functions of local relative speed and functions of the scattering angle between the pre and post relative velocities. This angular function may or may not be integrable.  When integrable, the collision kernel is said to satisfy an angular cutoff condition. The particular case when the angular part of the kernel is bounded, is known as the Grad's cutoff condition \cite{gr63}. Otherwise, its non-integrability, referred to as an angular non-cutoff, satisfies specific conditions (for details see Section 2).

The concept of exponential moments  is associated to the notion of large energy decay rates for tails. A time dependent probability distribution function $f(t,v)$ is said to have $L^1$ exponential moment (tail behavior) of order $s$ and rate $r(t)$ if, for any fixed $t>0$,
\begin{align}\label{tt}
 \int_{\R^d} f(t,v) e^{r(t) \; \langle v\rangle^s \; dv} 
 \ \ \ \ \mbox{is  positive and finite.}
\end{align}
This concept was introduced  by Bobylev in \cite{bo84, bo97} and Gamba, Panferov and Villani in \cite{gapavi09}, where they show uniform in time propagation of $L^1$ Maxwellian tails  (i.e. Gaussian in $v$-space, that is $s=2$) for several type of collision kernels ranging from Maxwell-type  to hard sphere interactions with angular cutoff conditions, and by Bobylev, Gamba, Panferov in \cite{bogapa04} for different values of $s\in(0,2]$ in the study of inelastic interaction with internal heating sources.
These ground breaking works conceived the idea of controlling exponential moments by proving the summability of power series expansions on a parameter $r(t)$. Such formulation was motivated by formally commuting integration in $v$-space and the infinite sum derived from the power series of the exponential function in \eqref{tt}, upon which one obtains
 \begin{align} \label{tt1}
 \int_{\R^d} f(t,v) \sum_{q=0}^\infty \frac{r^q(t) \langle v\rangle^{sq}}{\Gamma(q+1)} 
	\; = \; \sum_{q=0}^\infty \frac{r^q(t) \; m_{sq}(t)}{\Gamma(q+1)}.
\end{align}
The terms $m_{sq}(t)$, called polynomial moments,  are $\langle v \rangle^{sq}$ -weighted $L^1$    norms of the distribution function $f(v,t)$ that solves the Boltzmann equation.  Representation \eqref{tt1}  replaces the quest of  $L^1$ exponential integrability with a given order and rate,  with study of summability of infinite sums (time series forms).

A fundamental technique for accomplishing this task (see \cite{bo97, bogapa04, gapavi09, alga08,allo13, mo06}) consists of controlling the weak form of the collision operator by the means of angular averaging. These estimates are used to derive a sequence of ordinary differential inequalities for the polynomial moments of the collisional form. These differential inequalities are an algebraic sum of a negative term of moments of highest order and a positive term of bilinear sums of moments of lower orders.

Recently  Alonso, Ca\~nizo, Gamba and Mohout \cite{alcagamo13} introduced a new technique (based on analyzing partial sums corresponding to the infinite sum appearing in \eqref{tt1}), to prove the generation of  exponential moments with  orders up to the potential rate and  the  propagation of exponential moments with orders up to $s=2$, under an angular integrability condition. 
It is interesting to note that these results do not rely on the rate of Povzner estimates for angular averaging, and so the resulting order $r(t)$ may not be optimal.

All results mentioned above were developed for the case of an integrable angular collision kernel. This brings us to the setting of this manuscript, the non-cutoff regime. 
 This manuscript focuses on the study of both generation and propagation in time of  exponential moments for solutions to the initial value problem for the $d$-dimensional Boltzmann equation for elastic collisions, in the space homogeneous case, for hard potentials without the angular cutoff assumption.
In this direction, Lu and Mouhot  \cite{lumo12} showed generation of exponential moments of order up to the potential rate in the collision kernel. In this work, we considerably extend their result by showing that rates and orders of exponential moments depend on the initial data, as well as  potential and angular singularity rates in collision kernels.

In order to treat the non-cutoff regime, we develop angular averaged estimates that account for the cancellation of non-integrable angular singularities by means of null forms averaging.
The other important component is summability of moments, which is achieved  by introducing Mittag-Leffler moments.

Indeed, the most significant point of this paper is the introduction of Mittag-Leffler moments, as $L^1$ Mittag-Leffler weighted norms. They enabled us to extend the range of orders of exponential moments that can be propagated uniformly in time for the non-cutoff case. To obtain our result, we encounter the need to study \eqref{tt1} where $\Gamma(q+1)$ is replaced with $\Gamma(aq+1)$, for a noninteger $a>1$ (which is reminiscent of some of the tools used in \cite{bogapa04}, although no summing of such renormalized moments was performed there)
\begin{align} \label{tt1'}
 \int_{\R^d} f(t,v) \sum_{q=0}^\infty \frac{r^q(t) \langle v\rangle^{sq}}{\Gamma(aq+1)} 
	\; = \; \sum_{q=0}^\infty \frac{r^q(t) \; m_{sq}(t)}{\Gamma(aq+1)}.
\end{align}
 We observed that the sum appearing on the left-hand side of \eqref{tt1'} is exactly the well-known Mittag-Leffler function $\cE_a (r(t) \langle v \rangle^s)$, where  $\cE_a$ is defined as
\eqn
	\cE_a (x) := \sum_{q=0}^\infty \frac{x^q}{\Gamma(aq+1)}.
\eeqn
In analogy to \eqref{tt}, this led us to introduce a concept of Mittag-Leffler moments
\begin{equation}
			\displaystyle\int_{\R^d}  f(t,v) \;\; \cE_{a} (\alpha^{a} \, \bra v \ket^2 ) \; dv
		\;\; = \;\; \displaystyle \sum_{q=0}^\infty \frac{m_{2q}(t) \, \alpha^{aq}}{\Gamma(aq+1)},	
\end{equation}
which are a natural generalization of exponential moments.

Another important aspect of our main result is that the highest order of exponential moment which can be propagated in time, depends continuously on the singularity rate of the angular cross-section. The less singular the angular kernel is, the higher order exponential moment can be propagated. See details in Remark~\ref{mainrmk}.

Let us mention one application of $L^1$ weighted estimates. In \cite{gapavi09}, Gamba, Panferov and Villani, gave a proof to close the open problem of propagation of $L^\infty$-Maxwellian weighted bounds, uniformly in time, to solutions of the Boltzmann equation with hard potential with a cutoff in the angular kernel.
 Their result follows from an application of  a maximum principle of parabolic type, due to the dissipative nature of the collisional integral, and  estimates on the Carleman representation of the gain (positive) part of the collision operator 
 that depend on the  $L^1$-Maxwellian weighted bounds  uniformly  propagated in time.   
We mention here that the extension of such result on propagation of $L^\infty(\R^d)$-exponential weights is currently being worked out for the non-cutoff and hard potential case in a forthcoming manuscript \cite{gapata15} using the $L^1$ weighted estimates obtained in this manuscript.

\subsection*{Organization of the paper} Section \ref{sec-prelim}  presents the  Boltzmann equation without the angular cutoff condition, exponential and Mittag-Leffer moments and the statements of the two main results of the manuscript - the angular averaged Povzner inequalities with angular singularity cancellation in Lemma~\ref{averaged-pov},   and  the generation and propagation of Mittag-Leffler moments in Theorem~\ref{thm}. Section 3 contains the proof of the angular averaged Povzner inequalities for non-integrable angular singularity, i.e. Lemma~\ref{averaged-pov}. This lemma is the main tool for the formation of ordinary differential inequalities for polynomial moments of all orders, which are covered in Section 4.   Section 5 provides  details of the proof of the propagation of Mittag-Leffler moments, while in Section 6 we give a new proof of the generation of exponential moments of order up to the rate of potentials. The final section, Appendix, gathers known and technical yet fundamental results used throughout this manuscript.

\section{Preliminaries and Main Results}\label{sec-prelim}

\subsection{The Boltzmann equation}
\noindent We consider the Cauchy problem for the spatially homogeneous (i.e. $x$-space independent) Boltzmann equation
 \eqn \label{eq-be}
	\left\{
		\begin{array}{l}
			\partial_t f(t,v) \, = \, Q(f,f)(t,v),  \quad t\in\R^+, v\in\R^d, \quad d\geq 2 \vspace{5pt}\\ 
			f(0,v) \, = \, f_0(v).
		\end{array}
	\right. 
\eeqn
\noindent The function  $f(t,v)$ models the particle density at time $t$ and velocity $v$ of a rarefied gas in which particle collisions are elastic and predominantly binary. The collisional operator $Q(f,f)$ is a quadratic integral operator  defined via
\eqn \label{eq-q}
	Q(f,f)(t, v) \, = \, \int_{\R^d} \int_{S^{d-1}} \, \big( f'  f'_* - f f_* \big) \, B(|u|, \hat{u}\cdot\sigma) \, d\sigma \, dv_*,
\eeqn
\noindent where we use the abbreviated notation $f_*= f(t,v_*)$,
 $f'=f(t,v')$, and $f'_*=f(t,v'_*)$.  Vectors $v', v'_*$ denote pre-collisional velocities and $v, v_*$ are their corresponding post-collisional velocities. Relative velocity is denoted by $u=v-v_*$, and its normalization by $\hat{u}=u/|u|$. Being an elastic interaction of reversible character that conserves momentum $v+v_* =v' +v'_*$ and energy $|v|^2 + |v_*|^2 = |v'|^2 + |v'_*|^2$,  pre and postcollisional velocities are related by formulas represented in center of mass $V={(v+v_*)}/{2}$ and relative velocity $u=v-v_*$ coordinates  as follows
\eqn\label{velo-pre}
	v \, = \,  V'\, + \,  \frac{|u'|}{2}\; \sigma,
		\quad  \quad
		v_* \, = \, V' \, - \,  \frac{|u'|}{2} \; \sigma,
		\quad \quad 
\sigma \in S^{d-1}.
\eeqn

\begin{wrapfigure}{r}{0.35 \textwidth}
\setlength{\unitlength}{1cm}
\begin{tikzpicture}[scale=0.45]
\centering
\begin{scope}
   	\draw (0,0) circle (4);
	\draw [->] (-4,0)-- (4,0);
	\node at (-4.5, -0.5) {$v_*$};
	
	\node at (3.7, 0.3)  {$u$};
	\node at (2.4, 2.5)  {$u'$};
	
	\draw [->] [thick] (0,0)--(2,0);
	\node at (2, .4)  {$\hat{u}$};
	\node at (4.2, -0.5) {$v$};
	\node at (3.5, 2.8) {$ v'$};

	\draw [->] (-3,-2.6) -- (3,2.6);
	\node at (-3.3, -3.3)  {$v'_*$};

	\draw [->] [thick] (0,0)--(1.5, 1.3);
	\node at (1.5, 1.6)  {$\boldsymbol{\sigma}$};

	\draw[->] [dashed] (2,-6)--(4,0);
	\draw[->] [dashed] (2,-6)--(-4,0);
	\draw[->] [dashed] (2,-6)--(-3,-2.6);
	\draw[->] [dashed] (2,-6)--(3,2.6);
	\node at (2, -6.5) {O};
\end{scope}
\centering
\end{tikzpicture}
\caption [] {\hskip -0.1in \begin{minipage}[c]{1.\linewidth} Pre-post collisional \\ velocities  \end{minipage}}
\vspace{-0.4in}
\end{wrapfigure}

The unit vector $\sigma \in S^{d-1} $, referred to as the scattering direction, has the direction of the pre-collisional relative velocity $u'=v'-v'_*$. We  bring to the reader's attention that the pre to post  collisional  exchange of coordinates satisfy 
\begin{flalign*}
& v' - v \, =  \,  \frac12(|u|\; \sigma -u),\\ 
& v'_*  - v_*\, = \, -\frac12 (|u|\; \sigma -u). &
\end{flalign*}
This representation embodies the relation  of the exchange of velocity directions as just functions of the relative velocity $u$ and the scattering direction $\sigma$.

\bigskip
The collisional kernel $B(|u|, \hat{u}\cdot\sigma)$ is assumed to take the form
\begin{align}\label{eq-kernel}
	B(|u|, \hat{u}\cdot\sigma) \, = \, |u|^\gamma \, b(\cos{\theta}),
\end{align}
where $\theta \in [0,\pi]$ is the angle between the pre and post collisional relative velocities, and thus it satisfies 
$\cos\theta = \hat u \cdot \sigma$.  In this manuscript we work in the variable hard potentials case, that is
\begin{align} \label{eq-gamma}
0<\gamma\leq 1.
\end{align}

We assume that the angular kernel is given by a positive measure $b(\hat{u} \cdot \sigma) $ over the sphere $S^{d-1}$. In many models, this function is non-integrable over the sphere, while its weighted integral is finite. In this manuscript we assume that for some $\beta \in (0,2]$ the following weighted integral is finite (with  $V_{d-2}= \frac {\pi^{(d-2)/2}}{\Gamma((d-1)/2)}$ being the volume of the $d-2$ dimensional unit sphere)
\begin{align}\label{eq-nc} 
A_\beta  &: =  \int_{S^{d-1}} b(\hat{u} \cdot \sigma) \sin^{\beta}{\theta}  \; d\sigma \nonumber \\
& = V_{d-2}  \int_0^\pi b(\cos{\theta})   \; \sin^{ \beta}{\theta} \; \sin^{d-2}{\theta}\; d\theta < \infty.
\end{align}
When $\beta = 0$ (a case that we do not consider), this condition is known as the angular cutoff assumption, under which 
the collisional operator can be split into the gain and loss terms
\begin{align} \label{qsplit}
Q(f,f) = Q^+(f,f) - Q^-(f,f),
\end{align}
where
 \begin{align*}
& Q^+(f,f)(t, v) \, = \, \int_{\R^d} \int_{S^{d-1}} \, f'  f'_*  \, B(|u|, \hat{u}\cdot\sigma) \, d\sigma \, dv_*, \\ 
& Q^-(f,f)(t, v) \, = \, f(v) \, \int_{\R^d} \int_{S^{d-1}} \,  f_* \, B(|u|, \hat{u}\cdot\sigma) \, d\sigma \, dv_*.
\end{align*}
In 1963 Grad \cite{gr63} proposed considering a bounded angular kernel $b(\cos\theta)$ and pointed out  that different cutoff conditions could be implemented too. Since then the cutoff theory developed extensively, with the belief that removing the singularity of the angular kernel should not affect properties of the equation. Recently, however, it has been observed (see for example \cite{li94}, \cite{de95}, \cite{dego00}, \cite{dewe04}) that the singularity of $b(\cos\theta)$ carries regularizing properties.  This, in addition to the analytical challenge, motivated further study of the non-cutoff regime.

The typical non-cutoff assumption in the literature is the condition \eqref{eq-nc} with $\beta =2$.
However, we work in the non-cutoff regime where the parameter  $\beta \in (0,2]$ is allowed to vary and we will see how the strength of the singularity of $b$ influences our main result.
In this setting, the splitting \eqref{qsplit} is not valid, which is one of the technical challenges that non-cutoff setting brings. In order to address this obstacle 
we exploit angular cancellation properties (for details please see Section \ref{sec-aux}).

\begin{remark}
In the physically relevant case corresponding to the dimension $d=3$,  when forces between particles are governed by an inverse power law long range interaction potential $\phi(x) = C x^{-(p-1)}$,  $C>0$, $p>2$, the angular kernel $b(\cos{\theta})$ has been derived by H. Grad \cite{gr63}  (see also  \cite{ce88}) and is shown to have the following form
\begin{align}\label{intrapot-exp}
& b(\cos{\theta}) \; \sin{\theta} \sim C \; \theta^{-1 -\nu}, \qquad \theta \rightarrow 0^+, \nonumber\\
& \nu = \frac{2}{p-1}, \qquad  \gamma = \frac{p-5}{p-1}, \qquad p>2.
\end{align}
Note that this model satisfies \eqref{eq-nc} with any $r>\nu$.

 \end{remark}

\subsection*{Weak formulation of the collision operator $\boldsymbol{Q(f,f)}$}
Thanks to the symmetries associated to the collisional form $Q(f,f)$, defined in the strong form  \eqref{eq-q}, the collisional operator has a weak formulation that is very important for the analytical manipulation of the equation. Indeed,  for any test function $\phi(v)$, $v\in{\R^d}$, one has (see for example \cite{ce88})
\begin{align}\label{weak-Q}
	\int\limits_{\R^d} Q(f,f)(t,v) \, \phi(v) dv 
		&=  \frac{1}{2}\; \iint\limits_{\R^{2d}} f(v)\, f(v_*) \; G_{\phi}(v,v_*)\, dv_*\, dv, \\
	G_{\phi}(v,v_*) 
	&=\int_{S^{d-1}} \left(  \phi(v') + \phi(v'_*) -\phi(v) -\phi(v_*)\right) B(|u|, \hat{u}\cdot \sigma) \;  \; d\sigma.  \nonumber
\end{align}

The key aspect of the equation in the weak formulation is expressed in the weight $G_\phi$ as it carries all the information about collisions through the collisional kernel $B$, which is averaged over the unit sphere against test functions $\Delta \phi =  \phi(v') + \phi(v'_*) -\phi(v) -\phi(v_*)$.  Crucial estimates on the function $G_\phi$  referred to in the Boltzmann equation literature as  Povzner estimates are described below.

In the angular cutoff case,  positive and negative contributions are treated separately and such estimates  are used to estimate the positive part of $G_\phi$.  A sharp
form of  angular averaged Povzner estimates from \cite{bo97, bogapa04, gapavi09} is obtained for general test functions $\phi(v)$ which are positive and  convex.  They are crucial for the study of moments summability, the main point of this manuscript.

When $\phi(v)=(1+|v|^2)^{k/2} = \langle v\rangle^k$,  these estimates, originally developed by Povzner \cite{po62}, yield ordinary differential inequalities for moment estimates that lead to an existence theory and generation and propagation of moments as developed in Elmroth \cite{el83},  Desvillettes \cite{de93}  Wennberg \cite{we97} and   Mischler, Wennberg \cite{miwe99}. These estimates were also obtained in the  non-cutoff case  by Wennberg \cite{we96} for hard potentials. Uniqueness theory  to solutions of the Boltzmann equation for hard potentials was first developed by Di Blassio in \cite{dibl74}.

When the angular part of the collision kernel is not integrable, i.e. the non-cutoff case, one needs to expand $\Delta \phi$  in  terms of  $v'- v$ and $v'_*  - v_*$, since both  
are a multiples of  $|u|\, \sin{\theta/2}$. For this strategy to succeed, the spherical integration variable $\sigma  \in S^{d-1}$ must be decomposed as $\sigma = \hat u \cos \theta + \omega \sin \theta$, corresponding to the polar direction of the relative velocity $u$, and the azimuthal direction $\omega \in S^{d-1}$  satisfying $u\cdot \omega=0$. 
This decomposition also plays a fundamental role in our  derivation of the angular averaged Povzner with singularity cancellation in the proof of  
Lemma~\ref{averaged-pov}.

\begin{remark}
We note that the identity \eqref{weak-Q} can also be expressed in a double mixing (weighted) convolutional form (\cite{gata09, alcaga10, alga11})
\begin{align*}
 \int\limits_{\R^d} Q(f,f)(t,v) \, \phi(v) dv &=  \frac{1}{2}\; \iint\limits_{\R^{2d}} f(v)f(v-u)\; G_{\phi}(v,u)\, du\, dv  \\
G_{\phi}(v,u) &=\int_{S^{d-1}} \left(  \phi(v') + \phi(v'-u') -\phi(v) -\phi(v-u)\right) B(|u|, \hat{u}\cdot \sigma) \;  \; d\sigma  \nonumber
\end{align*}
since both $v'$ and $v'_*$ can be written as functions of  $v, u$ and $\sigma$ from \eqref{velo-pre},  and so the  weight function $G_\phi(v,u)$ is an average over $\sigma\in S^{d-1}$.
\end{remark}

\subsection{Moments of solutions to the Boltzmann equation} 
From the probabilistic viewpoint, moments of a probability distribution density $f(t,v)$ with respect to the variable $v$ are integrals of such density weighted by functions $\phi(v)$. These are important objects to study as they express average quantities that have significant meaning for the model under consideration. They are the so called observables. In this sense  polynomial moments correspond to such integrals for polynomial weights, and exponential moments are for exponential weights.

We now recall definitions of polynomial and exponential moments and we here introduce the Mittag-Leffler moments, which are a natural generalization of the exponential moments.

\begin{definition}[Polynomial and exponential moments]
 \label{def-mom}
Polynomial moment of order $q$  and exponential moment of order $s$ and rate $\alpha$ are respectively defined by:
\begin{align}
	& m_q(t) \, := \, \int_{\R^d} f(t,v) \;  \bra v \ket^q \; d(v), \label{def-polymom} \\
	&\mathcal{M}_{\alpha, s}(t) := \int_{\R^d}  f(t,v) \; e^{\alpha \, \bra v \ket^s} \; dv. \label{def-expmom}
\end{align}

\end{definition} 

\begin{remark} 
Using the Taylor series expansion, the exponential moment of order $s$ and rate $\alpha$ can also be written as the following sum
\eqn\label{def-expmomsum}
	\mathcal{M}_{\alpha, s}(t) \; = \;\sum_{q=0}^\infty \frac{m_{qs}(t) \; \alpha^q}{q!}.
\eeqn
\end{remark}

\begin{remark}
Polynomial moments can be expressed in terms of the norm of a natural Banach space in the context of the Boltzmann equation. Namely, if we denote
\begin{align*}
L^1_k = \{f \in L^1(\R^d): \int_{\R^d} f    \langle v\rangle^{k} dv =   \int_{\R^d} f    \left(1 + |v|^2 \right)^{k/2} dv <\infty\},
\end{align*}
then
\begin{align}
m_q(t) = \|f \|_{L^1_q}(t).
\end{align} 
Also, note that 
\begin{align}\label{moml1}
\|f\|_{L^1_q} \leq \|f\|_{L^1_{q'}}, \quad \mbox{for any} \,\, q \leq q'.
\end{align}
\end{remark}

Note that this expression is associated to the notion of $L^1$ exponential tail behavior described in \eqref{tt} and \eqref{tt1}. Consequently, finiteness of exponential moments can be understood as implying that the function $f(t,v)$ has an  exponential tail in $v$. In this paper, we study whether this property can be generated or propagated in time for the case of variable hard potentials in the non-cutoff case.

\smallskip

Because our summability estimates lead to expressions  similar to that of \eqref{def-expmomsum}, yet  having  $\Gamma (aq + 1)$  as a generalization of factorials with non-integer $a>1$,  we are  motivated  to use Mittag-Leffler functions, as they are conceived as a generalization of the Taylor expansion of the exponential function. More precisely, for a parameter $a>0$, Mittag-Leffler function is defined via
\eqn\label{def-ml}
	\cE_a (x) := \sum_{q=0}^\infty \frac{x^q}{\Gamma(aq+1)}.\label{eq-mlfc}
\eeqn
Note that for $a=1$, the Mittag-Leffler function  coincides with the Taylor expansion of the classical exponential function $e^x$. It is also well known (see e.g. \cite{emot53}, page 208.) that for any $a>0$, the Mittag-Leffler function asymptotically behaves like an exponential function of order $1/a$,   that is
\eqnn
 \cE_a (x)  \sim  e^{x^{1/a}}, \qquad \mbox{as} \,  x \rightarrow \infty.
\eeqnn

Since $\bra v \ket^2$ is the building block for our calculations, we prefer to have $x^2$ as the argument of Mittag-Leffler function when generalizing $ e^{\alpha \, x^s}$
\begin{align}\label{eq-asym}
 \cE_{2/s} (\alpha^{2/s} \, x^2) \sim  e^{\alpha \, x^s}, \qquad \mbox{for} \,  x \rightarrow \infty.
\end{align}

Hence, they satisfy the following, with some positive constants $c, C$
\begin{align}\label{ml-asymp}
c \; e^{\alpha x^s} \; \le \; \cE_{2/s} (\alpha^{2/s} x^2) \; \le \; C\; e^{\alpha x^s}.
\end{align}

This motivates our definition of Mittag-Leffler moments.
\vspace{5pt}
\begin{definition}[Mittag-Leffler moment] \label{def-mom-ML} 
 Mittag-Leffler moment of  order $s$ and rate $\alpha>0$ of a function $f$ is introduced via
\eqn
	\displaystyle\int_{\R^d}  f(t,v) \;\; \cE_{2/s} (\alpha^{2/s} \, \bra v \ket^2 ) \; dv.
\eeqn
\end{definition} 

\begin{remark} 
In the rest of the paper we will  use the fact that Mittag-Leffler moments can be represented as the following sum (a time series form), which follows from \eqref{eq-mlfc}
\begin{equation}\label{def-mlmomsum}
			\displaystyle\int_{\R^d}  f(t,v) \;\; \cE_{2/s} (\alpha^{2/s} \, \bra v \ket^2 ) \; dv
		\;\; = \;\; \displaystyle \sum_{q=0}^\infty \frac{m_{2q}(t) \, \alpha^{2q/s}}{\Gamma(\frac{2}{s} \, q+1)}.	
\end{equation}
\end{remark}

\begin{remark}
Formally, by taking $k = \frac{2q}{s}$, the above sum becomes 
\eqnn
\sum_{k \in \frac{2}{s} \Z} \frac{m_{ks}(t) \;  \alpha^k}{\Gamma(k+1)},
\eeqnn
that we show it relates to  the time series in \eqref{tt1} with the difference being that the summation here goes over the fractions.

\end{remark}

\subsection{The main results}

There are two important results in this manuscript. The first one relates to the angular averaged Povzner estimate with cancellation. It gives an estimate of the weight function $G_\phi$ in the weak formulation \eqref{weak-Q} when the test function is a monomial $\phi(v) = \langle v \rangle^{rq}$. We denote this weight function by
\begin{align}
G_{rq} := G_{\langle v \rangle^{rq}} :=  \int_{S^{d-1}} \left(  \langle v' \rangle^{rq}  + \langle v'_* \rangle^{rq} 
	-\langle v \rangle^{rq} - \langle v_* \rangle^{rq}\right) B(|u|, \hat{u}\cdot \sigma) \;  \; d\sigma 
\end{align}

\begin{lemma}\label{averaged-pov}
Suppose that the angular kernel $b(\cos\theta)$ satisfies the non-cutoff condition \eqref{eq-nc} with $\beta =2$. Let $r, q>0$. Then the weight function satisfies
\begin{align} \label{part1}
\nonumber
G_{rq}(v, v_*) & \leq |v-v_*|^\gamma \, \left[ - \, A_2 \, \Big( \langle v \rangle^{rq} +  \langle v_* \rangle^{rq}\Big)
		    \quad + \, A_2 \, \Big( \langle v \rangle^{rq-2} \langle v_* \rangle^{2} 
		    + \langle v \rangle^{2} \langle v_* \rangle^{rq-2}\Big) \right.	 \\  
	& \left. \quad+  \, \varepsilon_{qr/2} \, A_2 \, \frac{qr}{2} \left( \frac{qr}{2}-1\right)\,  \langle v \rangle^2 \langle v_* \rangle^2 \,
	    \Big( \langle v \rangle^2 + \langle v_* \rangle^2 \Big)^{ \frac{qr}{2}-2 }\right],
\end{align}
where $ A_2 = |S^{d-2}| \int_0^\pi \,  b(\cos{\theta})  \, \sin^d {\theta} \, d\theta$ is finite by \eqref{eq-nc}.
The sequence  $\varepsilon_{qr/2}=: \varepsilon_{\bf q}$, defined as
\begin{equation} \label{def-eps}
	\varepsilon_{\bf q} := \frac{2}{A_2} |S^{N-2}| \displaystyle \int_0^\pi
      		\left( \int_0^1 t \left( 1 - \frac{\sin^2 \theta}{2} t \right)^{{\bf q}-2} dt\right) b(\cos \theta) \; \sin^N \theta \; d\theta,
\end{equation}
has the following decay properties.  If $b(\cos{\theta})$ satisfies the non-cutoff assumption \eqref{eq-nc} with $\beta \in (0,2]$, then
\begin{equation}\label{eq-epsdecay}
0\ <\ \varepsilon_{\bf q} \; {\bf q}^{1 - \frac{\beta}{2}} \  \rightarrow 0, \ \quad  \mbox{as} \; {\bf q} \rightarrow \infty.
\end{equation}
\end{lemma}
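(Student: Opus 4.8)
The plan is to prove the Povzner-type bound \eqref{part1} by a careful Taylor expansion of the energy increments, followed by angular integration, and then to establish the decay \eqref{eq-epsdecay} of $\varepsilon_{\bf q}$ by a direct asymptotic analysis of the defining integral \eqref{def-eps}. I will treat the two halves of the lemma separately, since the estimate \eqref{part1} uses only $\beta=2$ while the decay statement must track the full range $\beta\in(0,2]$.

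\textbf{Step 1: setting up the expansion.} Using the decomposition $\sigma = \hat u\cos\theta + \omega\sin\theta$ with $\omega\perp u$, I would record the elementary identities $|v'|^2 = |v|^2 - \tfrac12|u|^2(1-\cos\theta) + \tfrac12|u|\,u\cdot\omega\sin\theta$ and the analogous one for $|v'_*|^2$ with the sign of the last term flipped (this is where the azimuthal average kills the cross term). Write $E = \langle v\rangle^2$, $E_* = \langle v_*\rangle^2$, and set $E' = \langle v'\rangle^2 = E - \eta + \tau$, $E'_* = E_* - \eta - \tau$, where $\eta = \tfrac12|u|^2(1-\cos\theta)\ge 0$ is the symmetric transferred-energy term and $\tau = \tfrac12|u|\,(u\cdot\omega)\sin\theta$ the antisymmetric one, so $E' + E'_* = E + E_* - 2\eta$ and $\eta$ conveniently factors as $\eta = |u|^2\sin^2(\theta/2) \le E E_*\sin^2\theta$ after using $|u|^2\le 2(|v|^2+|v_*|^2)\le 2EE_*$ and $\sin^2(\theta/2)\le \tfrac12\sin^2\theta$ for $\theta\in[0,\pi/2]$ (the range $\theta>\pi/2$ being handled by the usual symmetrization $\theta\mapsto\pi-\theta$ that replaces $v'\leftrightarrow v'_*$).

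\textbf{Step 2: second-order Taylor with the convexity/null-form argument.} Apply Taylor's theorem with integral remainder to $g(x) = x^{p}$, $p = qr/2$, around the point $E$ (resp. $E_*$) for $g(E-\eta+\tau)$ and around $E-\eta$ at the intermediate order: the first-order terms in $\tau$ cancel after integrating in $\omega$ over the subsphere $S^{d-2}$ (the null form), the zeroth-order-in-$\tau$ piece gives $g(E-\eta) + g(E_*-\eta) - g(E) - g(E_*)$, and the remainder is a manifestly nonnegative second derivative $p(p-1)(\cdot)^{p-2}$ times $\tau^2$ averaged in $\omega$, which produces the $\langle v\rangle^2\langle v_*\rangle^2(\langle v\rangle^2+\langle v_*\rangle^2)^{p-2}$ structure after bounding $\langle u\cdot\omega\rangle^2 \le |u|^2 \le 2EE_*$ and the convex combination $E-\eta\ge$ something comparable. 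For the $\tau$-free part I would use convexity of $x^p$ once more: $g(E-\eta)+g(E_*-\eta) - g(E) - g(E_*) \le -p\,\eta\big(g'(\xi_1)+g'(\xi_2)\big)$ bounded above, after a standard lemma, by $A_2$ times $-(\langle v\rangle^{rq}+\langle v_*\rangle^{rq}) + (\langle v\rangle^{rq-2}\langle v_*\rangle^2 + \langle v\rangle^2\langle v_*\rangle^{rq-2})$ once the $|u|^\gamma$ and angular constant $A_2 = |S^{d-2}|\int_0^\pi b(\cos\theta)\sin^d\theta\,d\theta$ are extracted. Collecting the three contributions and matching constants gives exactly \eqref{part1}, with the remainder coefficient being precisely $\varepsilon_{\bf q}$ as defined by \eqref{def-eps} once the $t$-integral from the integral form of the Taylor remainder and the $\omega$-average are carried out.

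\textbf{Step 3: decay of $\varepsilon_{\bf q}$.} This is the step I expect to be the real obstacle, because one must extract the sharp power ${\bf q}^{1-\beta/2}$ rather than merely $o(1)$. In \eqref{def-eps}, compute the inner integral exactly: $\int_0^1 t(1 - c t)^{{\bf q}-2}\,dt$ with $c = \tfrac12\sin^2\theta\in(0,1)$ equals $\frac{1}{c^2}\big[\frac{1}{{\bf q}-1} - \frac{(1-c)^{{\bf q}-1}}{{\bf q}-1} + \frac{(1-c)^{{\bf q}}-1}{{\bf q}}\big]$, which is $\asymp \frac{1}{{\bf q}^2 c^2}$ when $c{\bf q}\gg1$ and $\asymp \frac{1}{{\bf q}}$ (its maximum value $\tfrac12$) when $c{\bf q}\lesssim 1$; uniformly it is bounded by $\min\{\tfrac12, \,C/({\bf q}^2 c^2)\}$ and also by $C/({\bf q} c)$. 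Since $c\asymp\theta^2$ for small $\theta$ and $\sin^N\theta \asymp \theta^N$ while the non-cutoff hypothesis \eqref{eq-nc} says $b(\cos\theta)\sin^{d-2}\theta \in L^1(\sin^\beta\theta\,d\theta)$, i.e. $b(\cos\theta)\sin^{\beta+d-2}\theta$ is integrable near $0$, I split $\int_0^\pi = \int_0^{{\bf q}^{-1/2}} + \int_{{\bf q}^{-1/2}}^\pi$: on the first piece use the bound $\tfrac12$ on the $t$-integral and the finiteness of $\int_0^{\delta} b(\cos\theta)\sin^{\beta+d-2}\theta\,d\theta \to 0$, which after the scaling contributes $o({\bf q}^{\beta/2-1})$; on the second piece use the $C/({\bf q}^2 c^2)\sim C/({\bf q}^2\theta^4)$ bound, so the integrand is $\lesssim {\bf q}^{-2}\,b(\cos\theta)\sin^{d-2}\theta\,\theta^{-4}\cdot\theta^{2} = {\bf q}^{-2} b(\cos\theta)\sin^{\beta+d-2}\theta\,\theta^{-2-\beta}$, and $\int_{{\bf q}^{-1/2}}^\pi \theta^{-2-\beta}\,\big(b(\cos\theta)\sin^{\beta+d-2}\theta\big)\,d\theta \lesssim {\bf q}^{(2+\beta)/2}\cdot(\text{tail of an }L^1\text{ function})$, which again yields $o({\bf q}^{\beta/2-1})$ after multiplying by ${\bf q}^{-2}$ and by ${\bf q}^{1-\beta/2}$. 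Dominated convergence (the tails of the integrable weight vanishing) upgrades the $O$ to $o$, giving \eqref{eq-epsdecay}. The delicate point throughout is to keep the split point, the powers of $\theta$ from $\sin^N\theta$ and from $c^2$, and the exponent $\beta$ in the integrable weight perfectly aligned so that the borderline case $\beta=2$ (where ${\bf q}^{1-\beta/2}=1$, i.e.\ one only gets $\varepsilon_{\bf q}\to0$) and the regime $\beta$ small (where the gain is strongest) are both recovered from the same computation.
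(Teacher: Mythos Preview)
Your Step 1 contains concrete errors that derail the argument. Since $\omega\in S^{d-2}$ is by construction orthogonal to $u$, the quantity $u\cdot\omega$ vanishes identically, so your $\tau=\tfrac12|u|(u\cdot\omega)\sin\theta$ is zero and there is nothing for the null-form argument to act on. The correct antisymmetric term involves $V\cdot\omega$ (equivalently, $|v\times v_*|\,(j\cdot\omega)$ in the paper's notation). Also, elastic collisions conserve kinetic energy, so $E'+E'_*=E+E_*$ exactly; your claim $E'+E'_*=E+E_*-2\eta$ with $\eta>0$ cannot hold.

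More seriously, even after fixing the decomposition, your Step 2 misses the structural idea that produces the precise coefficients $-A_2$ and $+A_2$ in \eqref{part1}. The paper does \emph{not} Taylor expand $(\cdot)^{rq/2}$ around $E=\langle v\rangle^2$; it expands around the \emph{convex combination}
\[
E_{v,v_*}(\theta)\;=\;\langle v\rangle^2\cos^2\tfrac\theta2+\langle v_*\rangle^2\sin^2\tfrac\theta2,
\]
so that the zero-order-in-$\omega$ part of $G_{rq}$ becomes
\[
E_{v,v_*}(\theta)^{p}+E_{v,v_*}(\pi-\theta)^{p}-\langle v\rangle^{2p}-\langle v_*\rangle^{2p}
\;=\;(ta+(1-t)b)^p+((1-t)a+tb)^p-a^p-b^p
\]
with $a=\langle v\rangle^2$, $b=\langle v_*\rangle^2$, $t=\cos^2(\theta/2)$, $p=rq/2$. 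The key tool is then the elementary but nontrivial inequality (Lemma~\ref{lem-polyII}):
\[
(ta+(1-t)b)^p+((1-t)a+tb)^p-a^p-b^p\;\le\;-2t(1-t)(a^p+b^p)+2t(1-t)(ab^{p-1}+a^{p-1}b),
\]
which, because $2t(1-t)=\tfrac12\sin^2\theta$, integrates against $b(\cos\theta)\sin^{d-2}\theta\,d\theta$ to give exactly the $\pm A_2$ terms. Your proposal ``convexity of $x^p$ once more: $g(E-\eta)+g(E_*-\eta)-g(E)-g(E_*)\le -p\eta(g'(\xi_1)+g'(\xi_2))$'' yields a mean-value expression with uncontrolled intermediate points $\xi_1,\xi_2$, and I do not see how you pass from that to the sharp algebraic form $-(\langle v\rangle^{rq}+\langle v_*\rangle^{rq})+(\langle v\rangle^{rq-2}\langle v_*\rangle^2+\langle v\rangle^2\langle v_*\rangle^{rq-2})$ with matching constants. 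The second-order remainder $I_2$ is then bounded using $|E_{v,v_*}(\theta)+th\sin\theta(j\cdot\omega)|\le(\langle v\rangle^2+\langle v_*\rangle^2)(1-\tfrac t4\sin^2\theta)$, which is where the precise form of $\varepsilon_{\bf q}$ in \eqref{def-eps} comes from.

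Your Step 3 on the decay of $\varepsilon_{\bf q}$ is essentially correct and in fact goes beyond what the paper does (the paper simply cites \cite{lumo12} for \eqref{eq-epsdecay}); the splitting at $\theta\sim{\bf q}^{-1/2}$ and the dominated-convergence tail argument are the right ideas.
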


The sequence $\varepsilon_{\bf q}$  is the same as in \cite{lumo12}. Its decay properties \eqref{eq-epsdecay} are also proved in \cite{lumo12}, after invoking  angular averaging and the dominated convergence theorem. Condition \eqref{eq-epsdecay} is crucial for finding the highest order $s$ of Mittag-Leffler moment that can be propagated in time.

\smallskip
\begin{remark}
This lemma relies on the polynomial inequality presented in Lemma \ref{lem-polyII}.
The decay rate of $\varepsilon_{\bf q}$ is fundamental for the success of summability arguments, yet is not relevant for the generation and propagation of polynomial moments. In the angular cutoff case when term-by-term techniques were used, the corresponding constant had a rate $\varepsilon_q \approx q^{-r}$, with $r$ depending on the integrability of $b$, see \cite{bo97, bogapa04, gapavi09}. When the partial sum technique was employed in \cite{alcagamo13}, the precise rate was not needed any longer.  Here however, in the non-cutoff case, the knowledge of the precise decay rate of $\varepsilon_{\bf q}$ becomes important again because of extra power of $q$ in the last term of the right-hand side of \eqref{averaged-pov}.
\end{remark}

\medskip

The second main result, presented as an a priori estimate, consists of two parts. First, under the non-cutoff assumption \eqref{eq-nc} with $\beta=2$, we provide a new proof of the generation of exponential moments of order $ s \in (0, \gamma]$. Second, we show the propagation in time of the Mittag-Leffler moments of order $s \in (\gamma, 2)$. When $ s \in (\gamma, 1]$, $\beta=2$ in the non-cutoff \eqref{eq-nc} is assumed. When $s \in (1,2)$, the angular kernel 
is assumed to be less singular. 
Before we state the theorem, we remind the reader of the following notation
\begin{align*}
L^1_k = \{f \in L^1(\R^d): \int_{\R^d} f    \langle v\rangle^{k} dv <\infty\}.
\end{align*}
This is the natural Banach norm to solve the Boltzmann equation. 

\begin{theorem}[Generation and Propagation of Exponential-like moments]\label{thm}
Suppose $f$ is a solution to the Boltzmann equation \eqref{eq-be} with the collision kernel of the form \eqref{eq-kernel} for hard potentials \eqref{eq-gamma}, and with initial data $f_0 \in L^1_{2}$.
\vspace{-0,1in}
\begin{itemize}
\item[(a)] (Generation of exponential moments) If  the angular kernel satisfies the non-cutoff condition \eqref{eq-nc} with $\beta = 2$, then the exponential moment of order $\gamma$ is generated with a rate $r(t)={\alpha \, \min\{t,1\}}$. More precisely, there are positive constants $C, \alpha$, depending only on $b$, $\gamma$ and initial mass and energy, such that
\begin{equation}\label{eq-gener}
	\int_{\R^d}  \, f(t,v) \; e^{\alpha \, \min\{t,1\} \, |v|^\gamma} \, dv \, \leq \, C, \quad \mbox{for} \;\; t\geq 0.
\end{equation}
\item[(b)] (Propagation of Mittag-Leffler moments) Let $s \in (0, 2)$ and suppose that the Mittag-Leffler moment of order $s$ of the initial data $f_0$ is finite with a rate $r=\alpha_0$, that is,
\begin{equation}\label{eq-id}
	\displaystyle\int_{\R^d}  f_0(v) \;\; \cE_{2/s} (\alpha_0^{2/s} \, \bra v \ket^2 ) \; dv < M_0. 
\end{equation}
Suppose also that the angular cross-section satisfies assumption  \eqref{eq-nc}
\begin{align}\label{s_condition}
	& \mbox{with} \;\; \beta=2,    &  \mbox{if} \;\;s\in (0, 1] \nonumber \\
	&\mbox{with} \;\; \beta = \frac{4}{s}-2,   & \mbox{if} \;\; s\in (1, 2).
\end{align}
Then, there exist positive constants $C, \alpha$, depending only on $M_0$, $\alpha_0$, $b$, $\gamma$ and initial mass and energy such that the Mittag-Leffler moment of order $s$ and rate $r(t) =\alpha$ remains uniformly bounded in time, that is 
\begin{equation}\label{eq-prop}
	\displaystyle\int_{\R^d}  f(t,v) \;\; \cE_{2/s} (\alpha^{2/s} \, \bra v \ket^2 ) \; dv < C, \quad \mbox{for} \; \; t\geq 0.
\end{equation}
\end{itemize}
\end{theorem}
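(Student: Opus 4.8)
The plan is to translate the problem into a question about the polynomial moments $m_{2q}(t)$ and their summability, exactly along the lines suggested by the series representation \eqref{def-mlmomsum}. The first ingredient is the system of ordinary differential inequalities for $m_{2q}$: using the weak formulation \eqref{weak-Q} with $\phi(v)=\langle v\rangle^{2q}$ and the angular averaged Povzner estimate of Lemma~\ref{averaged-pov}, one obtains, after integrating against $f(v)f(v_*)$ and using $|v-v_*|^\gamma\le \langle v\rangle^\gamma\langle v_*\rangle^\gamma$ together with moment interpolation, a differential inequality of the schematic form
\begin{align*}
\frac{d}{dt} m_{2q} \ \le\ -\,K\,m_{2q+\gamma} \ +\ C\,\Big(\text{bilinear sums }S_q\text{ of lower-order moments}\Big) \ +\ \varepsilon_{\mathbf q}\,C\,q^2\,(\text{another bilinear sum}),
\end{align*}
where the highest-order moment enters with a negative sign because of the $-A_2(\langle v\rangle^{rq}+\langle v_*\rangle^{rq})$ term, and where the extra factor $q^{1-\beta/2}$ in \eqref{eq-epsdecay} must beat the $q^2$ coming from the last Povzner term; this is precisely where the condition \eqref{s_condition} on $\beta$ enters. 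I would first carefully record this ODE system (this is the content of Section~4 in the paper) and obtain the sharp form of the bilinear sums, isolating the top binomial-type terms $\binom{q}{1}$-weighted moments that will drive the renormalized series.

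Next I would set up the renormalized generating function. For part~(b), define the partial sums
\begin{align*}
\mathcal E_q^a(t)\ :=\ \sum_{p=0}^{q} \frac{m_{2p}(t)\,z^{ap}}{\Gamma(ap+1)},\qquad a=\tfrac{2}{s},
\end{align*}
with $z$ a small rate parameter to be chosen, and derive from the ODE system a differential inequality for $\mathcal E_q^a$ that closes uniformly in $q$. The key combinatorial step is that when one multiplies the bilinear moment sums by $z^{ap}/\Gamma(ap+1)$ and sums in $p$, the Beta-function identity $\sum \binom{\cdot}{\cdot}$-type bound turns $\sum_p \frac{1}{\Gamma(a k+1)\Gamma(a(p-k)+1)}$ into something controlled by $\frac{C}{\Gamma(ap+1)}$ (using $\Gamma(ak+1)\Gamma(a(p-k)+1)\ge c\,\Gamma(ap+1)$ for $a\ge 1$, which is the only place $a\ge1$, i.e.\ $s\le 2$, is genuinely needed), so the positive contributions are again of the form $C\,\mathcal E_q^a$ times lower powers; meanwhile the negative term $-K m_{2p+\gamma}$ provides, after summation, a coercive term that dominates for $z$ small because $\langle v\rangle^{\gamma}$ grows. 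One then shows that $\mathcal E_q^a$ satisfies an inequality of the type $\frac{d}{dt}\mathcal E_q^a \le \mathcal E_q^a\big(C - c\,(\text{something growing})\big)$, or more simply a Riccati-type bound $\frac{d}{dt}\mathcal E_q^a\le C_1\mathcal E_q^a - C_2 (\mathcal E_q^a)^{1+\gamma/2}$-ish after interpolating the coercive moment from below by a power of $\mathcal E_q^a$; this yields a bound on $\mathcal E_q^a(t)$ uniform in both $t$ and $q$, provided $\mathcal E_q^a(0)$ is bounded, which is exactly hypothesis \eqref{eq-id}. Letting $q\to\infty$ and using monotone convergence gives \eqref{eq-prop} with $\alpha$ chosen comparable to $z$ (and possibly smaller than $\alpha_0$).

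For part~(a), the generation statement, I would run the same machinery but with $a=1$ (so $s=\gamma$) and with a time-dependent rate $r(t)=\alpha\min\{t,1\}$: here one does not assume any exponential moment on $f_0$, only $f_0\in L^1_2$, so one exploits the well-known fact that under hard potentials polynomial moments are instantaneously generated and satisfy the Elmroth-type bound $m_{2q}(t)\le C_q\,\max\{1,t^{-q}\}$; the factor $\min\{t,1\}^q$ in the generating function exactly compensates the $t^{-q}$ blow-up at $t=0$, while the coercive $-K m_{2q+\gamma}$ term (now with $2q+\gamma \le 2q+2$ so that the exponent matches a power-$\gamma$ tail, forcing $s=\gamma$ as the ceiling for generation) keeps the sum bounded for all later times. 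One then differentiates $\sum_q \frac{m_{2q}(t)\,\alpha^q\min\{t,1\}^q}{q!}$, absorbs the extra term coming from $\frac{d}{dt}\min\{t,1\}^q$ into the good term for $t<1$, and closes the inequality; this is the content of Section~6.

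The main obstacle I expect is the bookkeeping in the renormalized bilinear estimate: one must show that after dividing by $\Gamma(ap+1)$ and summing, \emph{every} positive term (the ordinary interpolation term, the cross term $\langle v\rangle^{rq-2}\langle v_*\rangle^2$, and especially the Povzner remainder weighted by $\varepsilon_{\mathbf q}\,q(q-1)$) is dominated by a fixed multiple of $z^{\delta}\,\mathcal E_q^a$ for some $\delta>0$, with constants independent of $q$ — and for the remainder term this forces the inequality $\varepsilon_{\mathbf q}\,q^2 \lesssim q\cdot q^{1-\beta/2}\cdot(\text{something summable})$, i.e.\ one needs $q^{1-\beta/2}\to$ a constant or better, which is exactly \eqref{eq-epsdecay} and pins down $\beta=\frac4s-2$ when $s>1$. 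Verifying that the negative (coercive) term genuinely beats all of these for small rate $z$, uniformly in $q$, and that the resulting scalar differential inequality has a global-in-time bounded solution, is the technical heart of the argument; the rest is Gamma-function asymptotics (Stirling) and standard moment interpolation.
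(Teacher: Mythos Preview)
Your broad strategy is right---partial sums, Povzner-driven ODE inequalities, Beta-function combinatorics---but two structural elements of the paper's argument are missing from your sketch, and without them the inequality does not close.

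First, the bilinear Povzner term does \emph{not} sum to something of the form $C z^\delta\,\mathcal E^n_a$. After the Beta-function manipulation (the paper's Lemma~\ref{lem-sumB1} gives the precise decay $\sum_k\binom{q-2}{k-1}B(ak+1,a(q-k)+1)\le C_a(aq)^{-(1+a)}$, which exactly cancels the $q(q-1)(aq+1)$ prefactor up to a factor $q^{2-a}$), the double sum factors as a \emph{product}
\[
S_3 \ \le\ C_a\,\varepsilon_{q_0}\,q_0^{\,2-a}\,\mathcal E^n_a\,\mathcal I^n_{a,\gamma},
\]
where $\mathcal I^n_{a,\gamma}=\sum_q m_{2q+\gamma}\alpha^{aq}/\Gamma(aq+1)$ is the shifted sum. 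To absorb this into the negative term $-K_1\mathcal I^n_{a,\gamma}$ you need an \emph{a priori} bound $\mathcal E^n_a\le 4M_0$; the paper obtains this by a continuation (bootstrap) argument---define $T_n$ as the maximal time on which $\mathcal E^n_a<4M_0$, work on $[0,T_n]$, and then show $T_n=+\infty$. This bootstrap is the device that linearises the inequality, and it is absent from your outline.

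Second, the coercive lower bound is not a Riccati-type $(\mathcal E^n_a)^{1+\gamma/2}$; Jensen on each $m_{2q+\gamma}$ does not sum to a power of the partial sum. The paper instead bounds $\mathcal I^n_{a,\gamma}$ \emph{linearly} from below by splitting the $v$-integral at $\langle v\rangle=\alpha^{-1/2}$:
\[
\mathcal I^n_{a,\gamma}\ \ge\ \alpha^{-\gamma/2}\Big(\mathcal E^n_a - m_0\,e^{\alpha^{a-1}}\Big),
\]
which, combined with the bootstrap, yields the linear ODI $\tfrac{d}{dt}\mathcal E^n_a \le -\tfrac{K_1}{2\alpha^{\gamma/2}}\mathcal E^n_a + \mathcal K_0$ and hence a uniform-in-$n$ bound by the maximum principle.

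For part~(a) your parametrisation is off: $a=2/s$, so $a=1$ means $s=2$, not $s=\gamma$. The paper does \emph{not} run the Mittag-Leffler machinery here; it works directly with $E^n_\gamma(\alpha t,t)=\sum_q m_{\gamma q}(t)(\alpha t)^q/q!$ (ordinary exponential, moments indexed by $\gamma q$), uses Proposition~\ref{prop-ode}(a) and the generation bound $m_{\gamma q}(t)\le \mathbf B_{\gamma q}\max\{1,t^{-q}\}$, and the analogous lower bound $I^n_{\gamma,\gamma}\ge (\alpha t)^{-1}(E^n_\gamma-M_0^*)$ by a simple index shift. The $t^{-q}$ singularity is killed by the $(\alpha t)^q$ in the sum, and the same continuation argument on $[0,T_n^*]\subset[0,1]$ closes the estimate; propagation (part~(b)) then takes over for $t\ge 1$.
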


\begin{remark}\label{mainrmk}
The angular singularity condition $\beta = \frac{4}{s}-2$ in the case of Mittag-Leffler moments of order $s \in (1,2)$, continuously changes from $\beta =2$ (for $s=1$) to $\beta = 0$ (for $s=2$). Hence condition $\beta = \frac{4}{s}-2$  continuously interpolates between the most singular kernel typically considered in the literature, which is  \eqref{eq-nc} with $\beta =2$, and  an angular cutoff condition, which corresponds to \eqref{eq-nc} with $\beta=0$. This also tells us that in the most singular case one can propagate exponential moments of order $s\le1$, while in angular cutoff cases one can propagate exponential moments of order $s\le2$ (to be completely rigorous, Theorem \ref{thm} goes up to $\beta >0$ i.e. $s<2$, but \cite{alcagamo13} already established the case $\beta =0$ i.e. $s=2$). In other words, the less singular the angular kernel is, the higher order exponential moment propagate in time.\end{remark}

\begin{remark}
The propagation result of the theorem can be interpreted in two ways. First, for a Mittag-Leffler (or exponential) moment of order $s$ to be propagated, the singularity of $b$ should be such that it satisfies \eqref{eq-nc} with $\beta= \frac{4}{s} - 2$.  On the other hand, given an angular kernel  $b$ that satisfies condition \eqref{eq-nc} with a parameter $\beta \in (0,2]$, one can propagate Mittag-Leffler (and exponential) moments of order $s\le \frac{4}{\beta+2}$.
\end{remark}

\begin{remark}
We note two types of solutions that can be used in the previous theorem. One example are weak solutions, whose existence was proven by Arkeryd \cite{ar81} and later extended by Villani \cite{vi98}, under the assumption that initial data has finite mass, energy, entropy and a moment of order $2+\delta$, for any $\delta>0$. Another type of solutions that could be used are measure weak solutions constructed by Lu and Mouhot \cite{lumo12} (see also the result of Morimoto, Wang and Yang \cite{mowaya16}). These solutions exist if initial mass and energy are finite, provided that the angular kernel satisfies the following condition $\int_0^\pi b(\cos\theta) \sin^d\theta (1+ |\log(\sin\theta)|) < \infty$ , which automatically holds for kernels that satisfy condition \eqref{eq-nc} with $\beta<2$.
\end{remark}

\begin{remark}
Thanks to the fact \eqref{ml-asymp} that Mittag-Leffler functions asymptotically behave like exponential functions, finiteness of exponential moment of order $s$ is equivalent to finiteness of the corresponding Mittag-Leffler moment. This, in turn implies, as a corollary of Theorem \ref{thm} (b), the propagation of classical exponential moments.
\end{remark}

\begin{remark}
In the case of inverse-power law model described via \eqref{intrapot-exp}, in which  hard potentials correspond to $p>5$, the non-cutoff condition \eqref{eq-nc} is satisfied for $\beta>\nu$.  Hence, Mittag-Leffler moments of orders $s <  2 - \frac{2}{p}$ can be propagated in time. In the graph below $y$-axis represents the order of exponential tails. The dashed red line marks the highest order of exponential moments that can be generated, while the blue line marks the highest order of Mittag-Leffler moments that can be propagated in time. This graph visually confirms that our propagation result indeed goes beyond the rate of potentials $\gamma$.

\tikz[scale=0.8]{
\node at (15,.7) {$\gamma = \frac{p-5}{p-1}$};
\node at (15.2,1.8) {$s=2 - \frac{2}{p}$};
\node at (14.5, -0.3) {\small  p};
\draw [help lines,xstep=1cm, ystep=1cm] (2,-3) grid (14,2);
\foreach \x in {0,...,2}{\node [left] at (2,\x) {\tiny \x};}
\foreach \y in {2,...,14}{\node [below] at (\y,0) {\tiny \y};}
\draw [->] (2,0) -- (14.5,0) ;
\draw [->] (2,0) -- (2,2.5) ;
\draw[very thick, -] (5,-3)--(5,2);
\draw [red,thick,dashed] (2,0) plot [domain=2:14] (\x,{(\x-5)/(\x-1)});
\draw [blue,thick,->] (2,0) plot [domain=5:14] (\x, {2 - 2/(\x)});
}

\end{remark}

\subsection{A strategy for proving Theorem \ref{thm}} \label{sub-strategy}
 Details are provided in Section \ref{sec-ml} and Section 6. The proof is inspired by the recent work \cite{alcagamo13}, where propagation and generation of tail behavior \eqref{tt1} is obtained for angular cutoff regimes.
 
 Our goal is to prove that solutions $f(t,v)$ of the Boltzmann equation for hard potentials and angular non-cutoff conditions admit $L^1$-Mittag-Leffler moments  with parameters $a = \tfrac{2}{s}$ and  $\alpha(t) = r(t)$ to be found. Because of the asymptotic behavior \eqref{eq-asym}, that would imply that asymptotic limit for large values of $v$ is, indeed, and exponential tail in $v$-space, with order $s$ and rate $r(t)=\alpha(t)$. Thus, our proof is based on studying partial sums of Mittag-Leffler functions $\cE_a(\alpha^a x^2)$, with parameter $a = \tfrac{2}{s}$ and with rate $\alpha(t)$.

To this end, we work with  $n$-th partial sums associated to Mittag-Leffler functions, defined as
\begin{equation}\label{prop-E}
 	\cE^n_a (\alpha, t) = \displaystyle \sum_{q=0}^n \frac{m_{2 q}(t) \; \alpha^{aq}}{\Gamma(aq+1)}. 
 		\qquad 
  \end{equation}
We need to prove that there exits a positive rate $\alpha(t)$ and a positive parameter $a$, both uniform in $n$,  such the sequence of finite sums converges as $n\to \infty$. In particular, we need show that $\cE^n_a (\alpha, t)$ is bounded by a constant independent of time and independent of $n$. The values for $a, \alpha$ and the bound of the partial sums are found and shown  to depend on data parameters given by the collisional kernel characterization and properties of the initial data.
 
 In order to achieve all of this,  we derive a differential inequality for $\cE^n_a=\cE^n_a(\alpha, t)$. The first step in this direction is to obtain differential inequalities for moments $m_{2 q}(t)$, by studying the balance  
\begin{align}\label{multiply}
 m'_{2q}(t) = \int_{\R^d} Q(f, f) (t,v) \; \bra v \ket^{2q} dv.
\end{align}
that is a consequence of the Boltzmann equation. The right hand side is estimated by bounding the polynomial moments of the collision operator by non-linear forms of moments $m_{k}(t)$ of order up $k=2q+\gamma$, with  $0<\gamma\le 1$.
This requires finding the estimates of the weak formulation \eqref{weak-Q} with test functions $\phi(v) = \langle v \rangle^k$. Consequently, we need to estimate the angular integration within the weight function $G_{\langle v \rangle^{2q}}(v,v')$
\begin{align}
 \int_{\mathcal{S}^{d-1}} \left(\bra v' \ket^{2q} + \bra v'_* \ket^{2q} - \bra v \ket^{2q} - \bra v_* \ket^{2q} \right) b(\cos\theta) d\sigma.
\end{align}
These estimates will lead, thanks to \eqref{multiply} and \eqref{weak-Q}, to the following differential inequality for polynomial moments
 \begin{equation}\label{str-ode-m}
 \begin{array}{lcl}
    \quad\qquad  m'_{2q}  
	  &\leq&  -K_1 \, m_{2q+\gamma} \, + \,  K_2 \, m_{2q}  \\
	  
	  && + \, K_3 \, \varepsilon_q \, q \, (q -1) 
	      \displaystyle \sum_{k=1}^{k_q} \binom{q-2}{k-1} 
	      \left(m_{2k+\gamma} \, m_{2(q-k)} + m_{2k} \, m_{2(q-k) +\gamma}\right).
  \end{array}	      
 \end{equation}
 where $K_1=A_2 C_\gamma$, where  $A_2$ was defined  in \eqref{part1} and $C_\gamma$ just depends on the rate of potentials $\gamma$. Similarly $K_2$ and $K_3$ depend on these data parameters as well. 
The key property of this inequality is that the highest order moment of the right-hand side comes with a negative sign which  is crucial for moments propagation and generation.  
Another important aspect of this differential inequality is the presence of the factor $q(q-1)$ in the last term, which was absent in angular cutoff cases. Because of it, it will be of great importance to know the decay rate for $\varepsilon_q$.

The second step (Section 4) consists in the derivation of a differential inequality for partial sums $\cE^n_a=\cE^n_a(\alpha, t)$ obtained by adding $n$ inequalities corresponding to \eqref{str-ode-m} for renormalized polynomial moments   $m_{2 q}(t)  {\alpha^{aq}}/{\Gamma(aq+1)}$. This will yield
\begin{equation} \label{str-Eode1} 
 \displaystyle 
 	\frac{d }{dt} \cE^n_a 
	\leq c_{q_0} \, + \, \Big( - K_1 \, \cI^n_{a,\gamma} \, + \,  K_1 \,c_{q_0} \, + \,  K_2 \,  \cE^n_a \, + \,  \varepsilon_{q_0} \; q_0^{2-a} \, K_3 \, C \,\cE^n_a \,  \cI^n_{a,\gamma} \Big)\, .
\end{equation} 
In particular we obtain an ordinary differential inequality for the partial sum $\cE^n_a$ that depends on a shifted partial sum $\cI^n_{a,\gamma}$, defined by
\begin{equation}\label{prop-I}
 	\cI^n_{a,\gamma} (\alpha, t) = \displaystyle \sum_{q=0}^n \frac{m_{2 q +\gamma}(t) \; \alpha^{aq}}{\Gamma(aq+1)}.
 \end{equation}
The derivation of the last term in the right hand side of \eqref{str-Eode1} requires a decay property of combinatoric sums of Beta functions. These estimates are very delicate and are presented in detail in Lemma~\ref {lem-sumB1}  and   Lemma~\ref {lem-sumB2} in the Appendix. The constants $K_1, K_2$ and $K_3$ only depend on the  singularity conditions \eqref{eq-nc}, and so  they are independent of $n$ and on any moment $q$.  The constant $c_{q_{0}}$ depends only on a finite number $q_0$ of  moments of the initial data. The choice of  $q_0$ is crucial to control the long time behavior of solutions to inequality \eqref{str-Eode1}, and it
 is done
  such that $\varepsilon_{q_0} \; q_0^{2-a}  \, K_3 < K_1/2 $, after using condition \eqref{eq-epsdecay}   in Lemma~\ref{averaged-pov}.
 
 Finally, after showing that   $\cI^n_{a,\gamma}(\alpha,t)$ is bounded below by sum of two terms depending linearly  on $\cE^n_a(\alpha, t)$ and on mass $m_0$,   and nonlinearly on  the rate $\alpha$,  we obtain the following differential inequality for partial sums in the case of propagation of initial Mittag-Leffler moments
\begin{equation*}
	\frac{d}{dt} \cE^n_a(t) \, \leq \,  - \,\frac{K_1}{2\, \alpha^{\frac{\gamma}{2}}} \cE^n_a (t)
		 \, + \, \frac{K_1 \, m_0 \, e^{\alpha^{1-a}}}{ 2 \alpha^{\frac{\gamma}{2}}}
		\, +   \cK_{0}  \ \  \mbox{\sl (Propagation estimate)}.
\end{equation*}
The constant  $\cK_0$ depends on data parameters characterizing $q_0, c_{q_{0}}$ and   $ K_i$, $i=1,2,3.$. In addition, 
 for the generation case, 
we obtain 
\begin{equation*}
 \frac{d}{dt} E^n_{\gamma} \leq -\frac{1}{t} \left(\frac{K_1(E^n_\gamma -m_0)}{2 \alpha} - C_{q_0}\right)  + \cK_0 \ \ \  \mbox{\sl (Generation estimate)}.
\end{equation*}
Thus,  the differential inequalities \eqref{str-Eode1} are reduced  to  linear ones.
Both inequalities have corresponding solutions for choices on parameters $ a$ and $\alpha$ that are independent on $n$ and time $t$, and will depend on $q_0$, which depends only on data parameters.

\section{Angular averaging lemma} \label{sec-aux}

\noindent  This section is about  the proof of  the angular averaging with cancellation, i.e. Lemma~\ref{averaged-pov}, a crucial step for controlling moments and  summability of their renormalization by the Gamma function.
 One of the tools used in the proof is the following estimate on symmetrized  convex binomial expansions.

\begin{lemma} \label{lem-polyII} {\bf [Symmetrized  convex binomial expansions estimate]}
\noindent Let $a, b \geq 0$, $t\in [0,1]$ and $p \in (0,1] \cup [2,\infty)$. Then
 \begin{align}
    \Big( t a  +  (1-t) b \Big)^p \, & + \, \Big( (1-t) a  + t b \Big)^p \, - \, a^p \, - \, b^p  \nonumber \\ 
     &  \leq \,- \, 2 t (1-t) \Big(a^p + b^p \Big) \, + \, 2 t (1-t) \Big( a b^{p-1}  + a^{p-1} b \Big). \label{poly}
\end{align}
\end{lemma}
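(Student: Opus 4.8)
The plan is to reduce \eqref{poly} to a single one-variable inequality and then dispatch that with a soft convexity/concavity argument rather than any clever algebra.

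First I would use the symmetry of \eqref{poly} under $a\leftrightarrow b$ to assume $a\ge b\ge 0$; the case $a=0$ is trivial, and when $p\in(0,1)$ with $b=0<a$ the right-hand side is $+\infty$, so I may also assume $a>0$ and, in the range $p\in(0,1)$, that $b>0$. Writing $-2t(1-t)(a^p+b^p)=(-1+t^2+(1-t)^2)(a^p+b^p)$ shows that \eqref{poly} is equivalent to
\[
(ta+(1-t)b)^p+((1-t)a+tb)^p\ \le\ (t^2+(1-t)^2)(a^p+b^p)+2t(1-t)\bigl(ab^{p-1}+a^{p-1}b\bigr),
\]
and this follows by adding the single-term inequality
\[
(ta+(1-t)b)^p\ \le\ t^2a^p+(1-t)^2b^p+t(1-t)\bigl(ab^{p-1}+a^{p-1}b\bigr)\tag{$\ast$}
\]
to the same inequality with $t$ replaced by $1-t$. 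So it suffices to prove $(\ast)$.

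Next I would divide $(\ast)$ by $a^p$, set $x=b/a\in[0,1]$ and $w=x+t(1-x)\in[x,1]$ (the endpoints $x=0$ and $x=1$ are immediate), substitute $t=(w-x)/(1-x)$, $1-t=(1-w)/(1-x)$ and clear denominators, turning $(\ast)$ into the claim $f(w)\ge 0$ for all $w\in[x,1]$, where
\[
f(w):=(w-x)^2+(1-w)^2x^p+(w-x)(1-w)(x+x^{p-1})-w^p(1-x)^2 .
\]
One checks $f(x)=f(1)=0$; $f'(x)=x(1-x)\,h(x)$ with $h(x)=1-(p-1)x^{p-2}+(p-2)x^{p-1}$; and $f''(w)=2(1-x)(1-x^{p-1})-p(p-1)(1-x)^2w^{p-2}$. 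Since $(p-1)(p-2)\ge 0$ for every $p\in(0,1]\cup[2,\infty)$, we get $h'(x)=(p-1)(p-2)x^{p-3}(x-1)\le 0$ on $(0,1]$ and $h(1)=0$, hence $h\ge 0$ on $(0,1]$ and so $f'(x)\ge 0$. Moreover $f''$ is non-increasing on $(x,1]$: when $p\ge 2$ one has $p(p-1)\ge 0$ and $w\mapsto w^{p-2}$ non-decreasing, while when $p\in(0,1)$ one has $p(p-1)\le 0$ and $w\mapsto w^{p-2}$ non-increasing. The proof then closes with the elementary fact that a function $f$ on $[x,1]$ with $f(x)=f(1)=0$, $f'(x)\ge 0$, and $f''$ non-increasing must satisfy $f\ge 0$: the set $\{f''\ge 0\}$ is an interval $[x,c]$; on $[x,c]$ convexity together with $f'(x)\ge 0$ forces $f$ non-decreasing, so $f\ge f(x)=0$ there and $f(c)\ge 0$; on $[c,1]$ concavity puts $f$ above the chord joining $(c,f(c))$ and $(1,0)$, which is $\ge 0$.

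The main obstacle is finding this chain of reductions. A direct assault on \eqref{poly} via convexity of $v\mapsto v^p$, or an attempt to show that the natural auxiliary one-variable functions are globally convex or globally concave, breaks down for intermediate exponents — for instance the relevant second derivatives change sign for $p$ near $6$ — and it is precisely the weaker ``convex-then-concave with controlled endpoint data'' structure of $f$, together with the cheap monotonicity of $h$, that makes the argument run uniformly over all $p\in(0,1]\cup[2,\infty)$.
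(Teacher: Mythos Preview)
Your proof is correct, and the route is genuinely different from the paper's. The paper also uses the $a\leftrightarrow b$ symmetry and homogeneity, but instead of splitting off the single-term inequality $(\ast)$ and then varying the convex-combination parameter $w$ with $x=b/a$ fixed, it keeps both terms together, fixes $t$, sets $z=a/b\ge 1$, and studies
\[
F(z)=\bigl(1-2t(1-t)\bigr)(z^p+1)+2t(1-t)(z+z^{p-1})-(tz+(1-t))^p-((1-t)z+t)^p
\]
as a function of $z$. The payoff of that choice of variable is that $F$ is \emph{globally convex} on $[1,\infty)$: for $p\ge 2$ the elementary bounds $tz+(1-t)\le z$ and $(1-t)z+t\le z$ give $(tz+(1-t))^{p-2}\le z^{p-2}$, and one checks $F''(z)\ge 2(p-1)(p-2)\,t(1-t)\,z^{p-3}\ge 0$; then $F(1)=F'(1)=0$ finishes in two lines (the range $p\in(0,1]$ goes through by the obvious sign flips). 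So the paper's choice of independent variable sidesteps exactly the difficulty you flag in your last paragraph --- your $f''$ really does change sign for intermediate $p$, which forces the convex-then-concave endpoint argument, whereas their $F''$ stays nonnegative throughout. What your approach buys is a clean decoupled single-term inequality $(\ast)$ and a treatment that runs uniformly over $p\in(0,1]\cup[2,\infty)$ without splitting into cases; the paper's argument is shorter but handles the two $p$-ranges separately.
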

\begin{prf}
Suppose $p \geq 2$. The case $p \in (0,1]$ can be done analogously. Due to the symmetry of the inequality (\ref{poly}), we may without the loss of generality assume that $a \geq b$.  Since all the terms have homogeneity $p$,  the inequality (\ref{poly}) is equivalent  to showing
\eqnn
F(z) \geq 0, \qquad \forall z \geq 1,
\eeqnn 
where $F(z)$ is defined by
 \begin{equation*}
  F(z) :=  \Big(1- 2t(1-t) \Big) \big( z^p +1 \big) \; + \; 2t(1-t) \big(z + z^{p-1} \big) 
 \; - \; \Big(t z + (1-t) \Big)^p \; - \; \Big( (1-t) z + t \Big)^p.
 \end{equation*} 
 It is easy to check that
\begin{align*}
 	 F''(z) = (p-1) \Bigg[p & \Big(1- 2t(1-t) \Big) z^{p-2} \; + \;   2t(1-t) (p-2) z^{p-3} \\
	  & \; - \; p t^2 \Big(t z + (1-t) \Big)^{p-2} \;  - \;   p(1-t)^2 \Big( (1-t) z + t \Big)^{p-2} \Bigg].
\end{align*}
 As $tz + (1-t)$ and $(1-t)z + t$ are two convex combinations of $z$ and $1$, and since $z\geq 1$, we have that $tz + (1-t) \leq z$ and $(1-t)z + t \leq z$. Since $p\geq 2$, this implies $(tz + (1-t))^{p-2} \leq z^{p-2}$ and $((1-t)z + t)^{p-2} \leq z^{p-2}$. Therefore,
\begin{align*}
   \frac{F''(z)}{p-1} & \geq  p \big(1- 2t(1-t) \big) z^{p-2} \; + \;  2t(1-t) (p-2) z^{p-3} - \; p t^2 z^{p-2} \;-\; p(1-t)^2 z^{p-2} \\
    &  =   2t(1-t) (p-2) z^{p-3} \\
 & \geq 0.
\end{align*}
Thus, $F''(z) \geq 0$ for $z \geq 1$. So, $F'(z)$ is increasing. Since $ F'(1) = 0$, we have that $F'(z) \geq 0$ for $z\geq 1$. Finally using the fact that $F(1) = 0$, we conclude $F(z) \geq 0$ for $z \geq 1$.  
\end{prf}

We are now ready to prove the new form of the angular averaging with cancellation type lemma. For another version, see \cite{lumo12}.

{\it Proof of Lemma~\ref{averaged-pov}} Recall the definition of the weight $G_{rq}$
\begin{align}\label{weak-qr}
	& G_{rq}(v, v_*) := |v-v_*|^\gamma \int_{\mathcal{S}^{d-1}} b(\cos\theta) \, \sin^{d-2}{\theta}
		\;  \Delta\langle v \rangle^{rq} \; d\sigma, \\
	& \mbox{where} \;\; \Delta\langle v \rangle^{rq} = \langle v'\rangle^{rq} + \langle v'_*\rangle^{rq} - \langle v\rangle^{rq} - 
		\langle v_*\rangle^{rq}. \nonumber
\end{align}

\begin{wrapfigure}{R}{0.35 \textwidth}
\setlength{\unitlength}{1cm}
\begin{tikzpicture}[scale=0.45]
\centering
\begin{scope}
   \draw (0,0) circle (4);
  \draw (-4,0) arc [radius = 8, start angle=-120, end angle= -60];
  \draw [dashed] (-4,0) arc [radius = 8, start angle=120, end angle= 60];
  
  \draw [->] (0,0) -- (0,4);
   \node at (0.25,3.6) {$\boldsymbol{\hat{u}}$};
  
  \draw [->] (0,0) -- (2.65,2.15);
  \node at (2.3, 2.1) {$\boldsymbol{\sigma}$};
  \draw [dotted] [thick] (2.65, 2.15) -- (2.65, -0.4);
  
  \draw [->] (0,0) -- (3.1,-.45);
  \node at (3.2, -.7) {$\boldsymbol{\omega}$};
  \draw [dotted] (0,4) arc [x radius = 3.1cm, y radius = 4cm, start angle = 90, end angle = -7];

  \draw [->] (0,0) -- (-3,-.5);
  \node at (-3, -.8) {$\boldsymbol{j}$};
   \draw [dotted] (-3, -.5) arc [x radius = 2.7cm, y radius = 4cm, start angle = 187, end angle = 90];

   \draw [->] (0,0) -- (-1.7, 3.4);
   \node at (-1.2, 3.3) {$\boldsymbol{\hat{V}}$};
   \draw [dotted] (-1.7, 3.4) -- (-1.7, -0.3);
   
   \draw (0,0.8) arc [radius=0.8, start angle=90, end angle = 40];
   \node at (0.2, 0.4) {$\boldsymbol{\theta}$};
   
   \node at (1,-1.4) {$S^{d-2}$};
   \node at (3.5,-3) {$S^{d-1}$};
\end{scope}
\end{tikzpicture}
\caption [] {\hskip -0.1in \begin{minipage}[c]{1.\linewidth} Decomposition of $\sigma$. \end{minipage}}
\vspace{-0.3in}
\end{wrapfigure}

 This integral is rigorous even in cases when $\int_{S^{d-1}}  B(|u|, \cos{\theta}) \; d\sigma$ is unbounded, by an angular cancellation. A natural way of handling the cancellation is to decompose $\sigma \in S^{d-1}$ into $\theta \in [0,\pi]$ and its corresponding azimuthal variable $\omega \in S^{d-2}$, i.e. 
$\sigma = \cos{\theta}\; \hat{u} + \sin{\theta} \; \omega, $    where $S^{d-2}(\hat{u}) = \{ \omega \in S^{d-1} : \omega \cdot \hat{u} = 0 \}$. See Figure 2.

This decomposition allows handling the lack of integrability concentrated at the origin of the polar direction $\theta=0$. However, it requires a specific way of decomposing
$\bra v' \ket^2$ and $\bra v'_* \ket^2$ that separates the part that depends on $\omega$. More precisely, $\bra v' \ket^2$ and $\bra v'_* \ket^2$  are decomposed into a sum of a convex combination of the local energies proportional to a function of the polar angle $\theta$, and another term depending on both the polar angle and $\omega$ (see the Appendix for details)
\begin{flalign}\label{ep}
&\bra v' \ket^2 = E_{v, v_*}(\theta) \; + \; P(\theta, \omega), \\  \nonumber
&\bra v'_* \ket^2 = E_{v,v_*} (\pi - \theta) \; - \; P(\theta, \omega).&
\end{flalign}
Here $P(\theta, \omega)= |v\times v_*| \, \sin\theta \, (j\cdot \omega)$ is a null form in $\omega$ by averaging, i.e. $$\int\limits_{S^{d-2}} P(\theta, \omega) d\omega=0,$$
and $E_{v,v_*}(\theta)$ is a convex combination of $\bra v \ket^2$ and $\bra v'_* \ket^2$ given by
\begin{align*}
& E_{v,v_*}(\theta) \; = \; t \;  \bra v \ket^2  \; + \; (1-t) \; \bra v_* \ket^2, \qquad
 \mbox{where} \quad t \; = \; \sin^2{\frac{\theta}{2}}.
\end{align*}
These two fundamental properties make the weight function $G_{rq}(v,v_*)$ well defined for every $v$ and $v_*$ for sufficiently smooth test functions ($\phi \in C^2(\R^d)$) even under the  non-cutoff assumption \eqref{eq-nc} with $\beta=2$. In fact,  Taylor expansions associated to   $\langle v' \rangle^{rq}$ are a sum of a power of  $E_{v,v_*}(\theta)$, plus a null form in the azimuthal direction, plus a residue proportional to $\sin^2{\theta}$ that will secure the integrability of the angular cross section with respect to the scattering angle $\theta$.
Indeed, Taylor expand $\langle v' \rangle^{rq}$ around $E(\theta)$ up to the second order to obtain 
\begin{align}
 \langle v' \rangle^{rq} \;
	&  = \; \Big( E_{v,v_*}(\theta) \, + \, h \sin(\theta) \, (j\cdot \omega) \Big)^{\frac{rq}{2}} \\ \nonumber
    	& = \; \big(E_{v,v_*}(\theta)\big)^{rq/2}  \, 
		+ \, \frac{rq}{2} \, \big(E_{v,v_*}(\theta)\big)^{\frac{rq}{2}-1} \, h \, \sin{\theta} \, (j \cdot \omega) \\ 	\nonumber	 
	& \qquad +  \frac{rq}{2} \, \left(\frac{rq}{2}-1\right) \, h^2 \, \sin^2{\theta} \, (j \cdot \omega)^2 
	  	\displaystyle \int_0^1 (1-t)  \big[E(\theta) 
		+ t \, h \, \sin{\theta} \, (j \cdot \omega)\big]^{\frac{rq}{2}-2} \; dt.   	\label{eq-taylor}
\end{align}
Similar identity can be obtained for $\bra v'_* \ket^{rq}$.

 Since the collisional cross section is independent of the azimuthal integration we will make use of the following property.  Any vector $j$ laying in the plane orthogonal to the direction of $u$ is nullified by multiplication and averaging with respect to  the azimuthal direction with respect to $u$, that is    $\int_{S^{d-2}} j \cdot \omega \; d\omega = 0$. 
 
 Therefore,  we can write $G_{rq}(v,v_*)$ as the sum of two integrals on the $S^{d-1}$ sphere, whose first integrand contains the zero-order order term  of the Taylor expansion of, both,  
 $\bra v'_* \ket^{rq}$ and  $\bra v'\ket^{rq}$ subtracted by their corresponding un-primed forms, while the second integrand is just the second order term of the Taylor expansion \eqref{eq-taylor}
\begin{align}
& G_{rq}(v,v_*)  = I_{1} \; + \; I_{2} \\ \nonumber
& = \displaystyle \int_0^\pi \displaystyle \int_{S^{d-2}}
	         \Big( E_{v,v_*}(\theta)^{rq/2} + E_{v,v_*}(\pi - \theta)^{rq/2}
		    - \langle v \rangle^{rq} - \langle v_*\rangle^{rq} \Big) \; 
		      b(\cos{\theta}) \;\sin^{d-2}{\theta} \; d \omega \; d\theta   \\ \nonumber
&  \qquad+ \; \frac{rq}{2} \left(\frac{rq}{2}-1\right)  \, h^2 \, \displaystyle \int_0^\pi \sin^d{\theta} \; b(\cos{\theta})
	        \displaystyle \int_{S^{d-2}}   (j \cdot \omega)^2
		 \displaystyle \int_0^1 (1-t) \\ \nonumber
& \quad\qquad    \Big(  \big[E_{v,v_*}(\theta) \, + \, t \, h \, \sin{\theta} \, (j \cdot \omega)\big]^{\frac{rq}{2}-2} 
		      + \big[E_{v,v_*}(\pi - \theta) \, -\, t \, h \, \sin{\theta} \big]^{\frac{rq}{2}-2} \Big)  \; dt d\omega d\theta.  \label{split}
\end{align}

 \noindent At this point we use inequality (\ref{poly}) to estimate the first integral $I_{1}$. We use it with $a=\langle v \rangle^2$,  $b=\langle v_* \rangle^2$ and $t=\cos^2{\frac{\theta}{2}}$, which yields
\begin{equation}\label{Izl}
\begin{array}{lcl}
	I_{1} &\leq& \big|S^{d-2}\big| \displaystyle \int_0^\pi 
	  -\frac{\sin^2{\theta}}{2}  \; 
		\Big( \langle v \rangle^{rq} + \langle v_* \rangle^{rq} \Big)
		 \, b(\cos{\theta}) \; \sin^{d-2}{\theta}\; d\theta\\

         &&\quad \qquad	 \; + \; \displaystyle \int_0^\pi  \frac{\sin^2{\theta}}{2}  \; 
		\Big( \langle v \rangle^{rq-2} \langle v_* \rangle^{2} +
                  \langle v \rangle^{2} \langle v_* \rangle^{rq-2}\Big)
	 \, b(\cos{\theta}) \; \sin^{d-2}{\theta}\; d\theta \\     
                  
  &=&  - A_2 \Big( \langle v \rangle^{rq} +  \langle v_* \rangle^{rq}\Big)
		    \; + \; A_2  \Big( \langle v \rangle^{rq-2} \langle v_* \rangle^{2} 
		    + \langle v \rangle^{2} \langle v_* \rangle^{rq-2}\Big).    
 \end{array} 
 \end{equation}
The constant $A_2$ was defined after \eqref{part1}.

For the second order term $I_{2}$, we use that $(j \cdot \omega)^2 \leq 1$ and $h=|v \times v_*| \leq \bra v\ket \, \bra v_*\ket $, and that (see \cite{lumo12})
 \begin{equation}
  \big|E_{v,v_*}(\theta) + t h \sin{\theta} \; (j \cdot \omega)\big| \leq 
      \Big( \langle v \rangle^2 + \langle v_* \rangle^2 \Big) \left( 1 - \frac{t}{4} \sin^2{\theta} \right),
 \end{equation}
to conclude
\begin{align*}
 I_{2}   \leq   \frac{rq}{2} \left(\frac{rq}{2}-1\right)  \, &\langle v \rangle^2 \langle v_* \rangle^2\, \big|S^{d-2}\big| 	 \displaystyle \int_0^\pi 	\sin^d{\theta} \; b(\cos{\theta}) \\		
	& \displaystyle \qquad\int_0^1
		2 (1-t) \left( \langle v \rangle^2 + \langle v_* \rangle^2 \right)^{\frac{rq}{2}-2} \;
		\left( 1 - \frac{1-t}{4} \sin^2{\theta} \right)^{\frac{rq}{2}-2} \; dt \; d\theta.
\end{align*}
After a simple change of variables $(t \mapsto 1-t)$ and recalling the definition of constant $\varepsilon_{rq/2}$ in \eqref{def-eps}, we see that
\begin{equation}\label{Is}	
I_{2}  \leq   \varepsilon_{rq/2} \, A_2 \, \frac{rq}{2} \left(\frac{rq}{2}-1\right) \,  \langle v \rangle^2 \langle v_* \rangle^2 \,   \Big( \langle v \rangle^2 + \langle v_* \rangle^2 \Big)^{\frac{rq}{2}-2}.
\end{equation}
\noindent Putting together the estimate for $I_{1}$ and for $I_{2}$, we obtain the desired  estimate on the weight $G_{rq}(v,v_*)$.

\section{Ordinary differential inequalities for moments} \label{sec-ode} 

\noindent In this section we present two differential inequalities for polynomial moments (Proposition \ref{prop-ode}) which will 
be essential for the proof of Theorem \ref{thm}. We also state and prove a result about generation of polynomial moments in the non-cutoff case (Proposition \ref{prop-genpoly}). Before we state the proposition, we recall the ``floor function" of a real number, which in the case of a positive real number $x\in\R^+$ coincides with the integer part of $x$
\begin{align}
\lfloor x \rfloor := \mbox{integer part of} \; x.
\end{align}

\begin{proposition}\label{prop-ode}
Suppose all the assumptions of Theorem \ref{thm} are satisfied. Let $q \in \N$, and define  $k_p = \lfloor \frac{p +1}{2} \rfloor$ for any $p\in \R$ to be the integer part of $(p+1)/2$. Then for some constants $K_1, K_2, K_3 >0$ (depending on $\gamma$, $b(\cos \theta)$, dimension $d$) we have the following two ordinary differential inequalities for polynomial moments of the solution $f$ to the Boltzmann equation
\begin{itemize}
\vspace{-0.1in}

\item[{\bf (a)}] The ``$m_{\gamma k}$ version" needed for the generation of exponential moments
\begin{align}  \label{thm-b}
       	m'_{\gamma q}(t)   & \leq -K_1 \, m_{\gamma q+\gamma} \, + \,  K_2 \, m_{\gamma q}\\   
                    &+ K_3 \, \varepsilon_{q\gamma/2} \, \frac{q\gamma}{2} \left(\frac{q\gamma}{2}-1\right)
			\displaystyle \sum_{k=1}^{ 1+ k_{\frac{q}{2} - \frac{2}{\gamma}} } 
	      		\binom{\frac{q}{2} - \frac{2}{\gamma}}{k-1} 
	      		\left(m_{2\gamma k+\gamma} \, m_{\gamma q - 2\gamma k} 
	      		+ m_{2\gamma k} \, m_{\gamma q - 2\gamma k +\gamma}\right).\nonumber 
\end{align}
  
  \item[{\bf (b)}] The ``$m_{2k}$ version" needed for propagation of Mittag-Leffler moments
\begin{align} \label{thm-c}
m'_{2q} \leq -K_1 \, m_{2q+\gamma} \, &+ \,  K_2 \, m_{2q} \\
	  & + \, K_3 \, \varepsilon_q \, q(q-1)
	      \displaystyle \sum_{k=1}^{k_q} \binom{q-2}{k-1} 
	      \left(m_{2k+\gamma} \, m_{2(q-k)} + m_{2k} \, m_{2(q-k) +\gamma}\right).  \nonumber 
\end{align}
 \end{itemize}
 In both cases, the constant $K_1=A_2 C_\gamma$, where  $A_2$ was defined  in \eqref{part1} and $C_\gamma$, to be defined in the proof below, only depends on the $\gamma$ rate of the hard potentials. Similarly $K_2$ and $K_3$, also depend on  data, through the dependence on $A_2$ and $C_\gamma$.
\end{proposition}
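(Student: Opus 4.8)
The plan is to start from the identity $m'_{2q}(t) = \int_{\R^d} Q(f,f)(t,v)\,\bra v\ket^{2q}\,dv$, pass to the weak form \eqref{weak-Q} with test function $\phi(v) = \bra v\ket^{2q}$ (resp. $\phi(v)=\bra v\ket^{\gamma q}$ for part (a)), and then invoke the angular averaging Lemma \ref{averaged-pov} to bound the resulting kernel $G_{2q}(v,v_*)$ pointwise. This reduces the problem to integrating, against $f(v)f(v_*)$, the three groups of terms on the right-hand side of \eqref{part1}: the negative ``gain'' term $-A_2\bigl(\bra v\ket^{2q}+\bra v_*\ket^{2q}\bigr)$, the ``mixed'' term $A_2\bigl(\bra v\ket^{2q-2}\bra v_*\ket^2 + \bra v\ket^2\bra v_*\ket^{2q-2}\bigr)$, and the ``quadratic-in-$q$'' term involving $\varepsilon_{qr/2}$ and $\bigl(\bra v\ket^2+\bra v_*\ket^2\bigr)^{qr/2-2}$. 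Throughout, I will use $|v-v_*|^\gamma \le C_\gamma\bigl(\bra v\ket^\gamma + \bra v_*\ket^\gamma\bigr)$ (or the sharper $|v-v_*|^\gamma \le \bra v\ket^\gamma\bra v_*\ket^\gamma$ where needed, with $C_\gamma$ depending only on $\gamma\in(0,1]$), to convert the relative-velocity weight into products of single-variable brackets; this is the source of the constant $C_\gamma$ in $K_1 = A_2 C_\gamma$.

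The key steps, in order: \textbf{(1)} For the negative term, combining it with $|v-v_*|^\gamma$ and using $|v-v_*|^\gamma \ge c_\gamma\bra v\ket^\gamma - C\bra v_*\ket^\gamma$-type lower bounds (valid after exploiting the symmetry $v\leftrightarrow v_*$ and integrating against the probability density $f$ with bounded mass and energy), one produces the leading negative contribution $-K_1 m_{2q+\gamma}$; the ``error'' created when $|v-v_*|$ is small is absorbed into $+K_2 m_{2q}$ using boundedness of $m_0, m_2$. \textbf{(2)} For the mixed term, $\int\int f(v)f(v_*)|v-v_*|^\gamma\bra v\ket^{2q-2}\bra v_*\ket^2\,dv\,dv_*$ is bounded, via $|v-v_*|^\gamma\le\bra v\ket^\gamma\bra v_*\ket^\gamma$, by $m_{2q-2+\gamma}\,m_{2+\gamma}\le C\,m_{2q}$ (using \eqref{moml1} and finiteness of low-order moments, noting $2q-2+\gamma\le 2q$ for $\gamma\le1$ when… — actually one keeps it as a lower-order term absorbed into $K_2 m_{2q}$, since $2q-2+\gamma < 2q$); this again feeds into the $+K_2 m_{2q}$ term. \textbf{(3)} For the quadratic term, I expand $\bigl(\bra v\ket^2 + \bra v_*\ket^2\bigr)^{qr/2-2}$ by the (generalized/finite) binomial theorem; since $qr/2$ need not be an integer, I split the sum at the midpoint using the floor function $k_q = \lfloor(q+1)/2\rfloor$, pairing each term $k$ with its symmetric partner $q-k$ and using $\bra v\ket^{2a}\bra v_*\ket^{2b}+\bra v\ket^{2b}\bra v_*\ket^{2a}$ symmetry together with $\binom{q-2}{k-1}$-type binomial coefficients to get the combinatorial sum displayed in \eqref{thm-c}; integrating against $f(v)f(v_*)|v-v_*|^\gamma$ and distributing the $|v-v_*|^\gamma$ as $\bra v\ket^\gamma$ onto one factor yields the products $m_{2k+\gamma}m_{2(q-k)} + m_{2k}m_{2(q-k)+\gamma}$, each multiplied by $K_3\,\varepsilon_q\,q(q-1)$. \textbf{(4)} Part (a) is identical with $\phi = \bra v\ket^{\gamma q}$ in place of $\bra v\ket^{2q}$, so that $rq/2$ becomes $\gamma q/2$ and the binomial expansion of $\bigl(\bra v\ket^2+\bra v_*\ket^2\bigr)^{\gamma q/2 - 2}$ produces exponents $2\gamma k$; the bookkeeping of the index range gives $k_{q/2 - 2/\gamma}$.

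The main obstacle I anticipate is \textbf{step (3)}: correctly organizing the non-integer binomial expansion of $(\bra v\ket^2 + \bra v_*\ket^2)^{qr/2-2}$ into a symmetric finite sum with the right binomial-coefficient weights and index range $k=1,\dots,k_q$, while ensuring that the truncation/tail of the (a priori infinite, since the exponent is not a nonnegative integer) expansion is controlled. One must verify that the terms with $k$ near $q$ are genuinely bounded by products of finite-order moments and that no spurious high-order moment above $m_{2q+\gamma}$ sneaks in; this requires the elementary but delicate inequality $\bra v\ket^{2k}\bra v_*\ket^{2(q-k)} \le$ (something absorbable) when combined with $\bra v\ket^2\bra v_*\ket^2$ prefactor from \eqref{part1}, keeping the total degree at $2q+\gamma$ so the negative term dominates. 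A secondary nuisance is making all the constants $K_1,K_2,K_3$ genuinely independent of $q$ (and of $n$, for the later partial-sum argument), which forces one to be careful that every application of \eqref{moml1} and of Young-type inequalities uses only $m_0$ and $m_2$, which are controlled uniformly in time by conservation of mass and the well-known boundedness of energy for hard potentials.
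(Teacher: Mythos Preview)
Your overall architecture is correct and matches the paper's proof: weak form, Lemma~\ref{averaged-pov}, then treat the three groups of terms separately using the pointwise inequalities
\[
|v-v_*|^{\gamma} \le C_\gamma^{-1}\bigl(\bra v\ket^{\gamma}+\bra v_*\ket^{\gamma}\bigr),
\qquad
|v-v_*|^{\gamma} \ge C_\gamma\,\bra v\ket^{\gamma}-\bra v_*\ket^{\gamma},
\]
with $C_\gamma=\min\{1,2^{1-\gamma}\}$. Steps (1) and (2) are fine; for (2) the paper uses Lemma~\ref{trick} to collapse $\bra v\ket^{rq-2+\gamma}\bra v_*\ket^{2}+\cdots$ directly into $\bra v\ket^{rq}\bra v_*\ket^{\gamma}+\cdots$, which after integration gives $m_\gamma\,m_{rq}\le m_2(0)\,m_{rq}$, but your route via \eqref{moml1} would also work.

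Your anticipated obstacle in step (3) is, however, based on a misconception, and in part (a) there is a genuine missing step. For part (b) you have $r=2$, so the exponent $rq/2-2=q-2$ is an \emph{integer} and the binomial expansion is finite and exact; there is no tail to control. For part (a) the exponent $\gamma q/2-2$ is indeed non-integer, but you cannot binomially expand $\bigl(\bra v\ket^{2}+\bra v_*\ket^{2}\bigr)^{\gamma q/2-2}$ and get exponents $2\gamma k$ as you claim---that expansion would produce $\bra v\ket^{2k}\bra v_*\ket^{\gamma q-4-2k}$, which are the wrong moment orders. The paper inserts the preliminary inequality
\[
\bigl(\bra v\ket^{2}+\bra v_*\ket^{2}\bigr)^{\frac{\gamma q}{2}-2}
\;\le\;
\bigl(\bra v\ket^{2\gamma}+\bra v_*\ket^{2\gamma}\bigr)^{\frac{q}{2}-\frac{2}{\gamma}},
\]
valid because $(a+b)^{\gamma}\le a^{\gamma}+b^{\gamma}$ for $0<\gamma\le 1$, and only \emph{then} expands. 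This is what produces the moments $m_{2\gamma k}$ and the index range $k_{q/2-2/\gamma}$ that you wrote down without justification. Finally, even for the non-integer exponent $p=q/2-2/\gamma$, no infinite-series argument is needed: Lemma~\ref{lm-comb} (the Bobylev--Gamba--Panferov polynomial inequality) gives the finite-sum upper bound
\[
(x+y)^{p}\;\le\;\sum_{k=0}^{k_p}\binom{p}{k}\bigl(x^{k}y^{p-k}+x^{p-k}y^{k}\bigr)
\]
for any real $p>1$. This is the tool that dissolves your ``main obstacle''; once you invoke it (and the preliminary inequality above for part (a)), the remainder is the reindexing $k\mapsto k-1$ that you already described, and no moment of order exceeding $2q+\gamma$ (resp.\ $\gamma q+\gamma$) appears.
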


\begin{prf} 
We start the proof by analyzing $m_{rq}$ with a general polynomial weight $\langle v \rangle^{rq}$. Then by setting $r=\gamma$ we shall derive (a) and by setting $r=2$  we shall obtain (b). 
Recall that  after multiplying the Boltzmann equation \eqref{eq-be} by $\langle v \rangle^{rq}$, the weak formulation \eqref{weak-Q}  yields
\begin{equation}\label{sym}
     m'_{rq}(t)  = \frac{1}{2}\displaystyle\iint\limits_{\R^{2d}} f f_* \; \, G_{rq}(v,v_*) \;dv\; dv_*.
 \end{equation}
The weight function $G_{rq}$ can be estimated as in Proposition \ref{averaged-pov}, which yields
\begin{align}\label{aftergrq} \nonumber
& m'_{rq}(t) \leq    - \frac{A_2}{2} \displaystyle\int_{\R^d} \int_{\R^d} f\; f_* \; |v-v_*|^\gamma \;
	  \Big( \langle v \rangle^{rq} +  \langle v_* \rangle^{rq} \Big) dv dv_*  \\  \nonumber
& \qquad+ \frac{A_2}{2} \displaystyle\int_{\R^d} \int_{\R^d} f\; f_* \; |v-v_*|^\gamma \;
	         \Big( \langle v \rangle^{rq-2} \langle v_* \rangle^{2} 
		    + \langle v \rangle^{2} \langle v_* \rangle^{rq-2} \Big) dv dv_* \\
& \qquad+ \,\frac{A_2}{2} \, \varepsilon_{rq/2} \, \frac{rq}{2} \left(\frac{rq}{2}-1\right)
	    \displaystyle\int_{\R^d} \int_{\R^d} f\, f_* \, |v-v_*|^\gamma \,		   
		 \langle v \rangle^2 \langle v_* \rangle^2 \,
	    \Big( \langle v \rangle^2 + \langle v_* \rangle^2 \Big)^{\frac{rq}{2}-2} 	dv dv_* 
\end{align}
We estimate $|v-v_*|^\gamma$ via elementary inequalities
\begin{equation}\label{eq-triangle}
       |v-v_*|^{\gamma} \leq C_\gamma^{-1} \big( \langle v \rangle^{\gamma} + \langle v_* \rangle^{\gamma} \big) 
       \qquad \mbox{and} \qquad
      |v-v_*|^{\gamma} \geq  C_\gamma \langle v \rangle^{\gamma} - \langle v_* \rangle^{\gamma}, 
 \end{equation}
 where $C_\gamma = \min\{1, 2^{1-\gamma}\}$  (see for example \cite{alcagamo13}). As an immediate consequence
 \begin{align}\label{withg1} \nonumber
 	& |v-v_*|^{\gamma}  \Big( \langle v \rangle^{rq} + \langle v_* \rangle^{rq} \Big)  \\ \nonumber
	& \qquad \qquad  \geq \Big( C_\gamma \langle v \rangle^{\gamma} - \langle v_* \rangle^{\gamma}  \Big)
		 \langle v \rangle^{rq} +   \Big( C_\gamma \langle v_* \rangle^{\gamma} - \langle v \rangle^{\gamma}  \Big)
		 \langle v_* \rangle^{rq} \vspace{5pt} \\
           & \qquad \qquad =  C_\gamma \Big( \langle v \rangle^{rq+\gamma} + \langle v_* \rangle^{rq + \gamma} \Big) 
               - \Big( \langle v \rangle^{rq} \langle v_* \rangle^{\gamma}  +  
                       \langle v \rangle^{\gamma} \langle v_* \rangle^{rq}\Big),
\end{align}
and
\begin{align}\label{withg2} \nonumber
	& |v-v_*|^{\gamma} \Big( \langle v \rangle^{rq-2} \langle v_* \rangle^{2} +
            	\langle v \rangle^{2} \langle v_* \rangle^{rq-2}\Big) \\ \nonumber
           & \qquad \qquad \leq C_\gamma^{-1} \Big( \langle v \rangle^{\gamma} + \langle v_* \rangle^{\gamma} \Big) \;
            	\Big( \langle v \rangle^{rq-2} \langle v_* \rangle^{2} +  \langle v \rangle^{2} \langle v_* \rangle^{rq-2}\Big) 	
		\vspace{5pt} \\          
           & \qquad \qquad \leq 2 C_\gamma^{-1} \Big( \langle v \rangle^{rq} \langle v_* \rangle^{\gamma} +
                  \langle v \rangle^{\gamma} \langle v_* \rangle^{rq}\Big),
\end{align}
where the last inequality uses Lemma \ref{trick}.
Combining \eqref{aftergrq} with \eqref{withg1} and \eqref{withg2} we obtain
\begin{align*}
& m'_{rq}(t) \leq   - \; \frac{A_2}{2} \; C_\gamma 
	 \displaystyle\int_{\R^d} \int_{\R^d} ff_* 
	    \Big( \langle v \rangle^{rq+\gamma} + \langle v_* \rangle^{rq + \gamma} \Big)  dv dv_*     \\ \nonumber
 & \qquad+ \,  \frac{A_2}{2}  (1+ 2C_\gamma^{-1}) \displaystyle\int_{\R^d} \int_{\R^d} f\; f_* \; 
		    \Big( \langle v \rangle^{rq} \langle v_* \rangle^{\gamma} +
                  \langle v \rangle^{\gamma} \langle v_* \rangle^{rq}\Big) dv dv_* \\  \nonumber
&\qquad + \; \frac{A_2 \;  \varepsilon_{rq/2}}{ 2 \, C_\gamma} \,  \frac{rq}{2} \left(\frac{rq}{2}-1\right)
		\displaystyle\iint_{\R^{2d}}  f f_* 
		\Big(\langle v \rangle^\gamma + \langle v_* \rangle^\gamma \Big) \;   
                  \langle v \rangle^2 \langle v_* \rangle^2 \;
	    \Big( \langle v \rangle^2 + \langle v_* \rangle^2 \Big)^{\frac{rq}{2}-2} dv dv_*  \\  \nonumber
\ 
&\qquad \leq   - \; \frac{A_2}{2} \; C_\gamma m_{0}(t)  m_{rq+\gamma}(t)  +    \frac{A_2}{2}  (1+ 2C_\gamma^{-1}) m_\gamma(t) m_{rq}(t)  \\
&\quad\qquad 
+\frac{A_2 \;  \varepsilon_{rq/2}}{ 2 \, C_\gamma} \,  \frac{rq}{2} \left(\frac{rq}{2}-1\right)
		\displaystyle\iint_{\R^{2d}}  f f_* 
		(\langle v \rangle^\gamma + \langle v_* \rangle^\gamma) 
                  \langle v \rangle^2 \langle v_* \rangle^2 
	    ( \langle v \rangle^2 + \langle v_* \rangle^2 )^{\frac{rq}{2}-2} dv dv_*. 	 
\end{align*}
Therefore, since  $0<\gamma\leq 1$,  by conservation of mass and energy, $m_0(t)=m_0(0)$ and $m_{\gamma}(t)\leq m_2(0)$, 
\begin{align}	 \label{thm-a}
& m'_{rq}(t) \; \leq \;  -K_1 \, m_{rq+\gamma}(t) + K_2 \, m_{rq}(t)
	+ \,\frac{ K_3}{2}\, \varepsilon_{rq/2} \,  \frac{rq}{2} \left(\frac{rq}{2}-1\right) \\ \nonumber
& \qquad \qquad  \,
		\displaystyle\iint_{\R^{2d}}  ff_* 
		(\langle v \rangle^\gamma + \langle v_* \rangle^\gamma )   
                  \langle v \rangle^2 \langle v_* \rangle^2 
	    ( \langle v \rangle^2 + \langle v_* \rangle^2 )^{\frac{rq}{2}-2}dv dv_*,	  
\end{align}
where $K_1 = A_2 \, C_\gamma \, m_0(0)$, \; $K_2 = A_2 \, (1 + 2C^{-1}_\gamma) \, m_2(0)$, and \; $K_3 = \frac{A_2}{ C_\gamma}$, so 
 these three constants only depend on the initial mass and energy, on the rate of the potential $\gamma$ and on the angular singularity condition \eqref{eq-nc} 
 that determines the constant $A_2$.

From here,  we proceed to prove (a) and (b) separately.

\noindent \textbf{(a)} Setting $r=\gamma$ in \eqref{thm-a}, applying the following elementary polynomial inequality which is valid for $\gamma\in(0,1]$
 \begin{equation}\label{2tos}
   \Big( \langle v \rangle^2 + \langle v_* \rangle^2 \Big)^{\frac{\gamma q}{2}-2} \, \leq \,
     \Big( \langle v \rangle^{2\gamma} + \langle v_* \rangle^{2\gamma} \Big)^{\frac{q}{2} - \frac{2}{\gamma}},
 \end{equation}
and using  the polynomial Lemma \ref{lm-comb} yields
\begin{align*}
 & m'_{\gamma q}(t)   \leq  \; -\; K_1 \; m_{\gamma q+\gamma} + K_2 \;  m_{\gamma q}  
	 +  \frac{K_3}{2} \; \varepsilon_{\gamma q/2}  \; \frac{\gamma q}{2} \left(\frac{\gamma q}{2}-1\right)   \\
& \qquad\qquad\qquad \displaystyle\iint\limits_{\R^{2d}} f\; f_* 
	\Big(\langle v \rangle^\gamma + \langle v_* \rangle^\gamma \Big) \;
	\langle v \rangle^2 \langle v_* \rangle^2 \;
	\Big( \langle v \rangle^{2\gamma} \, + \, \langle v_* \rangle^{2\gamma}\Big)^{\frac{q}{2} - \frac{2}{\gamma}} \; dv dv_*\\          
&\leq  -\; K_1 \; m_{\gamma q+\gamma} + K_2 \; m_{\gamma q}      
	      + \,\frac{K_3}{2} \, \varepsilon_{\gamma q/2} \;  \frac{\gamma q}{2} \left(\frac{\gamma q}{2}-1\right)
		 \displaystyle\iint\limits_{\R^{2d}} f\; f_* \; 
		\Big(\langle v \rangle^\gamma + \langle v_* \rangle^\gamma \Big)  \\
& \qquad \qquad
		\displaystyle \sum_{k=0}^{k_{\frac{q}{2} - \frac{2}{\gamma}}} 
		\binom{\frac{q}{2} - \frac{2}{\gamma}}{k}
		\Big( \langle v \rangle^{2\gamma k+2} \langle v_* \rangle^{\gamma q-2\gamma k -2}  +
		 \langle v \rangle^{\gamma q-2\gamma k -2} \langle v_* \rangle^{2\gamma k+2}\Big) \; dv dv_* \\
& \leq  -\; K_1 \; m_{\gamma q+\gamma} + K_2 \; m_{\gamma q}      
	      +   \, K_3 \, \varepsilon_{\gamma q/2} \;  \frac{\gamma q}{2} \left(\frac{\gamma q}{2}-1\right) \cdot \\
&  \qquad \qquad
		 \displaystyle \sum_{k=0}^{k_{\frac{q}{2} - \frac{2}{\gamma}}} 
		\binom{\frac{q}{2} - \frac{2}{\gamma}}{k}
		\Big( m_{2\gamma k + 2 + \gamma} \, m_{\gamma q - 2 \gamma k - 2} + 
			m_{\gamma q - 2\gamma k - 2 + \gamma} \, m_{2\gamma k +2}\Big) \; dv dv_*.
\end{align*}
Finally, re-indexing $k$ to $k-1$ and applying Lemma \ref{trick} yields
\begin{align*}
 m'_{\gamma q}(t)   \;  & \leq  -\; K_1 \; m_{\gamma q+\gamma} + K_2 \; m_{\gamma q}      
	      +   \, K_3 \, \varepsilon_{\gamma q/2} \;  \frac{\gamma q}{2} \left(\frac{\gamma q}{2}-1\right)  \\
&  \qquad \qquad
		\displaystyle \sum_{k=1}^{1+k_{\frac{q}{2} - \frac{2}{\gamma}}} 
		\binom{\frac{q}{2} - \frac{2}{\gamma}}{k-1}
		\Big( m_{2\gamma k + \gamma} \, m_{\gamma q - 2 \gamma k } + 
			m_{\gamma q - 2\gamma k + \gamma} \, m_{2\gamma k}\Big) \; dv dv_*.
\end{align*}
which completes proof of (a).

 \noindent {\bf (b)} Now, we set $r=2$  in \eqref{thm-a} and apply Lemma \ref{lm-comb} to obtain
\begin{align*}
&m'_{2q}(t)   \leq  \; -\; K_1 \; m_{2q+\gamma} + K_2 \; m_{2q} 
	  +  \; K_3 \; \varepsilon_q \; q(q-1) 
	     \displaystyle\iint\limits_{\R^{2d}} f\; f_* 
		 \Big(\langle v \rangle^\gamma + \langle v_* \rangle^\gamma \Big)  \\
& \qquad \qquad \ \langle v \rangle^2 \langle v_* \rangle^2 \;
	        \displaystyle \sum_{k=0}^{k_{q-2}} \binom{q-2}{k}
	        \left( \langle v \rangle^{2k} \; \langle v_* \rangle^{2(q-2)-2k} +
	        \langle v \rangle^{2(q-2)-2k} \langle v_* \rangle^{2k} \right) \; dv dv_* \\ 
& =-\; K_1 \; m_{2q+\gamma} + K_2 \; m_{2q} + \; K_3 \; \varepsilon_q \; q(q-1) \   		\displaystyle\iint\limits_{\R^{2d}} f\; f_* \; 
		\left(\langle v \rangle^\gamma + \langle v_* \rangle^\gamma \right)\  \\
& \qquad \qquad
		\  \displaystyle \sum_{k=0}^{k_{q-2}} \binom{q-2}{k}
		\Big( \langle v \rangle^{2k+2} \langle v_* \rangle^{2q-2k -2}  +
		 \langle v \rangle^{2q-2k -2} \langle v_* \rangle^{2k+2}\Big) \; dv dv_* \\ 
&=  -\; K_1 \; m_{2q+\gamma} + K_2 \; m_{2q} + \; K_3  \; \varepsilon_q \; q(q-1) \,
	\displaystyle \sum_{k=1}^{k_q} \binom{q-2}{k-1}
	\big( m_{2k+\gamma}\, m_{2q-2k} + m_{2k}\, m_{2q-2k+\gamma}\big).
\end{align*}
The last equality is obtained by re-indexing $k$ to $k-1$ and using that $1+ k_{q-2} = k_q$. This completes proof of (b).
\end{prf}

\medskip

\begin{proposition}[Polynomial moment bounds for the non-cutoff case] \label{prop-genpoly}
Suppose all the assumptions of Theorem \ref{thm} are satisfied. Let $f$ be solution to the homogeneous Boltzmann equation \eqref{eq-be} associated to the  initial data $f_0$.
\vspace{-0.1in}
\begin{enumerate}
	\item Let the initial mass and energy be finite, i.e. $m_2(0)$ bounded, then   for  every  $p>0$  there exists a constant $\bB_{rp} \geq 0$, depending on $2^{rp}$, $\gamma$, $m_2(0)$ and $A_2$ from condition   \eqref{eq-nc}, such that
		\begin{equation} \label{poly-gen}
			m_{rp}(t) \, \leq \, {\bB}_{rp} \, \max \{ 1, t^{-rp/\gamma}\}, \quad  \mbox{for all}\ \  r \in {\mathbb R^+}  \ \ \mbox{and}  \   t \ge 0 \, .
		\end{equation}
	\item Furthermore, if $m_{rp}(0)$ is finite, then the control can be improved to
		\begin{equation}\label{poly-prop}
			m_{rp}(t) \, \leq \, {\bB}_{rp}, \quad   \mbox{for all} \ \ r\in {\mathbb R^+}  \ \ \mbox{and}  \   t \ge 0 .
		\end{equation}
\end{enumerate}
 \end{proposition}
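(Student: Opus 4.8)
The statement is essentially classical (polynomial moment generation/propagation in the non-cutoff regime goes back to Wennberg \cite{we96}), and here it is meant to follow cleanly from the differential inequalities of Proposition~\ref{prop-ode}. The plan is to reduce both parts to a single scalar ordinary differential inequality for $m_{rp}(t)$ and then close it by an elementary ODE comparison. Since moments of order $\le 2$ are controlled by conservation of mass and energy, we may assume the target order $\rho:=rp>2$. I would start from the intermediate inequality \eqref{thm-a} in the proof of Proposition~\ref{prop-ode} (it is valid for a general weight exponent; take the weight $\langle v\rangle^{\rho}$, i.e.\ $q=1$, $r=\rho$), observing that for a \emph{fixed} order the constant $\varepsilon_{\rho/2}$ is merely finite, so its precise decay \eqref{eq-epsdecay} plays no role here. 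Estimating the last integral in \eqref{thm-a} — distinguishing $\rho\ge 4$ from $2<\rho<4$ to bound $(\langle v\rangle^2+\langle v_*\rangle^2)^{\rho/2-2}$ — its positive contribution is dominated by a finite combination of products $m_a\,m_b$ with $a+b=\rho+\gamma$ and $a,b\le\rho-2+\gamma<\rho$, in which one factor always has order at most $2+\gamma$; call this contribution $\Phi_\rho(t)$.

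Second, I would convert the negative term into a genuine super-linear sink: by Jensen's inequality together with \eqref{moml1}, $m_{\rho+\gamma}\ge m_0^{-\gamma/\rho}\,m_\rho^{\,1+\gamma/\rho}$, so \eqref{thm-a} becomes
\begin{equation*}
m'_{\rho}(t)\ \le\ -\,K_1\,m_0^{-\gamma/\rho}\,m_\rho(t)^{\,1+\gamma/\rho}\ +\ K_2\,m_\rho(t)\ +\ C_\rho\,\Phi_\rho(t).
\end{equation*}
Then I would run a strong induction on the order: assuming the asserted bounds for every order in $(0,\rho)$, each moment inside $\Phi_\rho$ is already controlled (for a factor of order $2+\gamma$, first interpolate it between $m_2$ and $m_{\rho+\gamma}$), and a Young inequality simultaneously absorbs the resulting sub-unit powers of $m_{\rho+\gamma}$ and the linear term $K_2 m_\rho$ into $\tfrac12 K_1 m_0^{-\gamma/\rho} m_\rho^{1+\gamma/\rho}$. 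What remains is a forcing term $\Psi_\rho(t)\le c_\rho\max\{1,t^{-\kappa_\rho}\}$ whose exponent $\kappa_\rho$ one checks against the rate being proved.

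Finally, I would invoke the scalar comparison principle for $y'\le -a\,y^{1+\varepsilon}+\Psi(t)$ (the same device used in \cite{alcagamo13}; see the Appendix): with $\varepsilon=\gamma/\rho$, if $\Psi$ has at most the admissible power-law singularity at $t=0$ one obtains $y(t)\le \bB\max\{1,t^{-1/\varepsilon}\}=\bB\max\{1,t^{-\rho/\gamma}\}$ irrespective of $y(0)$, which gives \eqref{poly-gen}; if moreover $m_{rp}(0)<\infty$ and the lower-order moments are \emph{uniformly} bounded — the induction hypothesis in case (2) — then $\Psi_\rho$ is bounded and $y(t)\le\max\{y(0),(\Psi_\rho/a)^{1/(1+\varepsilon)}\}$, which gives \eqref{poly-prop}. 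The dependence of $\bB_{rp}$ on $2^{rp}$ enters through the binomial expansions of $(\langle v\rangle^2+\langle v_*\rangle^2)^{\rho/2}$ used along the way.

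The main difficulty here is organizational rather than conceptual: keeping the strong induction over non-integer orders consistent, and verifying that the forcing $\Psi_\rho$ never grows faster at $t\to 0^+$ than the rate $t^{-\rho/\gamma}$ one is establishing. Because the dissipative term $-m_\rho^{1+\gamma/\rho}$ and the worst forcing term $m_{2+\gamma}\,m_{\rho-2}$ carry matching total order $\rho+\gamma$, the comparison still closes at the claimed rate, but this balance is exactly the point that has to be checked carefully.
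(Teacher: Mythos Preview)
Your approach can be made to work, but it is considerably more elaborate than the paper's, and the extra machinery (strong induction on non-integer orders, time-singular forcing $\Psi_\rho$, interpolation of $m_{2+\gamma}$ against $m_{\rho+\gamma}$, careful balance at $t\to 0^+$) is unnecessary. The paper bypasses all of this by observing that the positive ``lower-order'' contribution is already \emph{linear} in $m_{rp}$ itself, so no induction is needed at all. Concretely, from either \eqref{thm-b} or \eqref{thm-c} every product $m_a\,m_b$ appearing in the sum has $a+b=rp+\gamma$ with both indices between $\gamma$ and $rp$; by the monotonicity in the Remark after Lemma~\ref{trick} one has $m_a\,m_b\le m_\gamma\,m_{rp}\le m_2(0)\,m_{rp}$, and since the binomial coefficients sum to at most $2^{rp}$ the entire positive part collapses to $B_{rp}\,m_{rp}$ with $B_{rp}=B_{rp}(K_2,2^{rp}K_3)$. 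Combined with Jensen on the negative term this gives the \emph{closed} Bernoulli inequality
\[
m'_{rp}\ \le\ B_{rp}\,m_{rp}\ -\ K_1\,m_0^{-\gamma/(rp)}\,m_{rp}^{\,1+\gamma/(rp)},
\]
whose explicit supersolution yields both \eqref{poly-gen} and \eqref{poly-prop} in one stroke.

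Your route trades this single algebraic observation for an inductive scheme whose forcing $\Psi_\rho$ sits exactly at the critical singularity $t^{-\rho/\gamma-1}$ (from the product $m_{2+\gamma}\,m_{\rho-2}$ once both factors carry their inductive rates), so the comparison closes only at the level of constants rather than orders --- correct, but fragile. Also note a small ordering issue in your write-up: you apply Jensen first and then speak of absorbing sub-unit powers of $m_{\rho+\gamma}$ into the dissipative term; since Jensen gives only a \emph{lower} bound on $m_{\rho+\gamma}$, the Young absorption must be done against $-K_1 m_{\rho+\gamma}$ \emph{before} replacing it by $-K_1 m_0^{-\gamma/\rho} m_\rho^{1+\gamma/\rho}$.
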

 
\begin{prf} These statements can be shown by studying comparison theorems for initial value problems associated with ordinary differential inequalities of the type 
$$
y'(t) + A y^{1+c}(t) \le B y(t),
$$
and comparing them to classical  Bernoulli's differential equations for the same given initial $y(0)$.  In our context, these inequalities are a result of estimating moments for variable hard potentials, i.e. $\gamma>0$ as indicated in \eqref{eq-gamma}.
Comparison with Bernoulli type differential equations was classically used  in angular cutoff cases in \cite{we96, we97, miwe99, alcagamo13}. Also it was used in the proof of propagation of $L^1$ exponential tails for the derivatives of the solution of the Boltzmann equation  by means of  geometric series methods in \cite{bogapa04,gapavi09,alga08}.  
 
 In fact, the extension to the non-cutoff case follows  in a straightforward way from the moments estimates in Proposition~\ref{prop-ode}. 
   Indeed,  the moment estimates, from either \eqref{thm-b} or  \eqref{thm-c}, show  that the only  negative contribution is on the highest order moment,   being  either $m_{r q+\gamma}$ with $\gamma>0$ for $r=\gamma$ or
$2$, respectively.  Then, due to the fact that  $\gamma>0$, an application of classical Jensen's inequality with the convex function $\varphi(x)=x^{1+\gamma/(rp)}$  yields 
  \begin{align*}
  m_{rp+\gamma}(t)  \ \geq \ m^{-\gamma/(rp)}_0(0)\, m_{rp}^{1 + \gamma/(rp)}(t) \ \  \mbox{for all} \ t > 0 \, .
  \end{align*}
Applying this estimate to the negative term in either  \eqref{thm-b} or  \eqref{thm-c},  results in the following estimate
   \begin{align}\label{mom-bd}
m'_{rp} & \;  \leq \; B_{rp} m_{rp} - K_1 m_{rp+\gamma}\; \leq \; B_{rp} m_{rp} - K_1 m_{rp}^{1 + \gamma/(rp)},
\end{align}
with  $r$ either $\gamma$  in  \eqref{thm-b}, or $2$ in  \eqref{thm-c}.  The constants  are $K_1=K_1(\gamma, A_2)$  with $0<\gamma\le 1$,  and  
$A_2$ from the angular integrability condition \eqref{eq-nc}; and   $B_{rp} =B_{rp}(K_2, 2^{rp} K_3)$, after using that  
$\varepsilon_p \le 1,$  where $K_2$ and $K_3$ also depend on the initial data and collision kernel through $\gamma$ and $A_2$.
 
Therefore, as in  \cite{we96}, we set  
$$
y(t):= m_{rp}(t),\ A:= K_1, \ B:=B_{rp} \mbox{ and } c=\gamma/(rp).
$$
 The bound \eqref{poly-prop} then follows by finding an upper solution
 that solves the associated  Bernoulli ODE 
  $$
 y'(t) = B y(t) -  A y^{1+c}(t)
 $$ 
 with finite initial polynomial moment  $y(0)=m_{rp}(0)$. This yields that for any $t>0$
\begin{align}
{m}_{rp}(t) 
&\leq \left[m_{rp}^{-\gamma/(rp)}(0) \, e^{-t \, B\gamma/(rp)}  + \frac AB (1-  e^{-t \, B\gamma/(rp)}) \right]^{-rp/\gamma} 	\nonumber \\
& \leq  \left[\frac AB (1-  e^{-t \, B\gamma/(rp)}) \right]^{-rp/\gamma} 	\nonumber \\
& \leq \left( \frac{A}{B}\right)^{-rp/\gamma} 
	\left\{
		\begin{array}{l}
			 \left( \frac{rp}{B\gamma} e^{B\gamma/rp}\right)^{-rp/\gamma} \,\, t^{-rp/\gamma} , \quad t < 1, \vspace{5pt}\\ 
			 (1-  e^{- \, B\gamma/(rp)})^{-rp/\gamma} , \quad t\geq 1. 
		\end{array}
	\right. \nonumber \\
&\le \bB_{rp} \max\{1,  t^{-rp/\gamma} \}, \label{bBrp}
\end{align}
where $ \bB_{rp} := \left( \frac{K_1}{B_{rp}}\right)^{-rp/\gamma}  \max \left\{  \left( \frac{rp}{\gamma B_{rp}} e^{\gamma B_{rp}/rp}\right)^{-rp/\gamma},  \,(1-  e^{- \, \gamma B_{rp}/(rp)})^{-rp/\gamma}\right\}$.

Now, since $m_{rp}(t)$ is a continuous function of time, if $m_{rp}(0)$ is finite for any  $rp\geq 1$, then the bound for strictly positive times we just obtained in \eqref{bBrp} implies
 \begin{align}\label{bBrp2}
 {m}_{rp}(t) \leq  \bB_{rp} .
 \end{align}
for possibly different constants $\bB_{rp}$. We finally stress that constants ${\bf B}_{rp}$  depend on $2^{rp}, \gamma, m_{2}(0)$ and $A_2$ from condition   \eqref{eq-nc}.
\end{prf}

\vspace{20pt}

\section{Proof of  Mittag-Leffler moments' propagation} \label{sec-ml}


\begin{proof}[Proof of Theorem \ref{thm} (b)]
\noindent Let us recall representation \eqref{def-mlmomsum} of the Mittag-Leffler moment of order $s$ and rate $\alpha$ in terms of infinite sums  
\begin{align}
\displaystyle\int_{\R^d}  f(t,v) \;\; \cE_{2/s} (\alpha^{2/s} \, \bra v \ket^2 ) \; dv
		\;\; = \;\; \displaystyle \sum_{q=0}^\infty \frac{m_{2q}(t) \, \alpha^{2q/s}}{\Gamma(\frac{2}{s} \, q+1)}.	
\end{align}
We introduce abbreviated notation $a = \frac{2}{s}$, and note that since $s\in (0, 2)$, we have
\begin{align}
1 < a:= \frac{2}{s} < \infty.
\end{align}
We consider the $n$-th partial sum, denoted by $\cE^n_a$, and the corresponding sum, denoted by $\cI^n_{a,\gamma}$, in which polynomial moments are shifted by  $\gamma$. In other words, we consider
\begin{align*}
 	\cE^n_a (\alpha,t) = \displaystyle \sum_{q=0}^n \frac{m_{2 q}(t) \; \alpha^{aq}}{\Gamma(aq+1)}, \qquad
 	\cI^n_{a,\gamma} (\alpha,t) = \displaystyle \sum_{q=0}^n \frac{m_{2 q + \gamma}(t) \; \alpha^{aq}}{\Gamma(aq+1)}. 
\end{align*}
For each $n \in \N$, define
\begin{equation}\label{def-Tn}
 	 	T_n:=  \sup \left\{ t \geq 0 \;|\; \cE^n_a(\alpha, \tau) <  4M_0, \; \mbox{for all} \; \tau \in [0,t) \right\}.
\end{equation}
where the constant $M_0$ is the one from the initial condition \eqref{eq-id}.

This  parameter $T_n$ is well-defined and positive. Indeed,  since $\alpha$ will be chosen to be, at least, smaller than $\alpha_0$, then at time $t=0$ we have 
  \begin{equation*} 
\cE^n_a(0) =  \sum_{q=0}^n \frac{m_{2q}(0) \; \alpha^{aq}}{\Gamma(aq+1)} \ <\  \sum_{q=0}^\infty \frac{m_{2q}(0) \; \alpha_0^{aq}}{\Gamma(aq+1)}= \int f_0(v)  \cE_{2/s} (\alpha_0^{2/s} \, \bra v \ket^2 )  \, dv < 4\, M_0, 
\end{equation*}
uniformly in $n$. Therefore, since partial sums are continuous functions of time (they are finite sums and each $m_{2q}(t)$ is also continuous function in time $t$), we conclude that $\cE^n_a(\alpha, t) <4M_0$ holds for $t$ on some positive time interval denoted $[0, t_n)$ with $t_n>0$  (and hence $T_n>0$).

Next, we look for an ordinary differential inequality that the partial sum $\cE^n_a(\alpha, t)$ satisfies, following the steps presented in Subsection \ref{sub-strategy}. We start by splitting $\frac{d}{dt}\cE^n_a(\alpha, t)$ into the following two sums, where index $q_0$ will be fixed later, and then apply the moment differential inequality \eqref{thm-c}
\begin{align} 
&\displaystyle \frac{d }{dt} \cE^n_a(\alpha, t) \,  = \,
   	\displaystyle\sum_{q=0}^{q_0-1} \frac{m'_{2 q}(t) \, \alpha^{aq}}{\Gamma(aq+1)} 
        	\;+\; 
	\displaystyle\sum_{q=q_0}^{n} \frac{m'_{2 q}(t) \  \alpha^{aq}}{\Gamma(aq+1)} \nonumber \\ 
& \leq \,
	\displaystyle\sum_{q=0}^{q_0-1} \frac{m'_{2 q}(t) \, \alpha^{aq}}{\Gamma(aq+1)}
	\, - \, K_1 \displaystyle\sum_{q=q_0}^n \frac{m_{2 q + \gamma}(t) \, \alpha^{aq}}{\Gamma(aq+1)} 
	+ \, K_2 \displaystyle\sum_{q=q_0}^n \frac{m_{2 q}(t) \, \alpha^{aq}}{\Gamma(aq+1)}  \nonumber \\
& \qquad + \, K_3 \displaystyle \sum_{q=q_0}^{n} 
	    \frac{\varepsilon_q \, q (q-1) \, \alpha^{aq}}{\Gamma(aq+1)} 	    
	    \sum_{k=1}^{k_q} \binom{q - 2}{k-1} 
	 \Big(m_{2k+\gamma}\, m_{2(q-k)} + m_{2k} m_{2(q-k)+\gamma} \Big)  \nonumber \\
& =: \, S_0 \, - \, K_1 \, S_1 \, + \, K_2 \, S_2 \, + \, K_3 \, S_3. \label{prop-4sums}
\end{align}
We estimate each of the four sums $S_0, S_1, S_2$ and $ S_3$ separately, with the goal of comparing each of them to the functions $ \cE^n_a(\alpha, t)$ and $ \cI^n_{a,\gamma}(\alpha, t)$. We remark that the most involving term is $S_3$. It resembles the corresponding sum in the anglar cutoff case  \cite{alcagamo13}, with a crucial difference that our sum $S_3$ has two extra powers of $q$, namely $q(q-1)$. Therefore, a very sharp calculations is required to control   the growth of $S_3$ as a function of the number $q$ of moments. This is achieved    by an appropriate renormalization of polynomial moments within $S_3$ and also by invoking the decay rate of associated combinatoric sums of Beta functions developed in the Appendix \ref{appx}.

The term $S_0$ can be bounded by terms that depends on the initial data and the parameters of the collision cross section.
Indeed,  from Lemma \ref{prop-genpoly}, the propagated polynomial moments can be estimated  as follows
\begin{align}\label{prop-s00}
m_p \leq \bB_{p} \ \ \mbox{ and }  \ \  \ m'_{p} \leq  B_{p} \, \bB_{p},\qquad \mbox{for any} \, p>0,
\end{align}
where  the constant ${\bf B}_{p}$ defined in \eqref{bBrp} depends on $\gamma$, the initial $p$-polynomial moment $m_{p}(0)$ and $A_2$ from condition   \eqref{eq-nc}.  

In particular,  for $0<\gamma<1$, we can fix  $q_0$, to be chosen later,  such that the constant 
\begin{align}
 c_{q_0}: = \max_{p \in I_{q_0}} \{\bB_{p}, \,   B_{p} \, \bB_{p}\},  \ \  \ \mbox{with } \ \ I_{q_0} = \{ 0, \dots,  2q_0 +1 \}
\end{align}
 depends only on $q_0$, $\gamma$, $A_2$ from condition  \eqref{eq-nc}, and  the initial polynomial moments $m_{q}(0)$, for  $q\in I_{q_0}$. 
 Thus,  due to the monotoncity of $L^1_k$ norms with respect to $k$ as presented in \eqref{moml1}, both the $2q$-moments and its derivatives, as well as the shifted moments of order $2q+\gamma$,  are  controlled by $c_{q_0}$  as follows
\begin{align}\label{cq0_1}
& m_{2q}(t) , \, m_{2q+\gamma}(t), \,    m'_{2q}(t)   \leq c_{q_0}, \quad \mbox{for all } \, q \in \{0,1,2, ... q_0\}, 
\end{align}

Therefore, for $q_0$ fixed, to be chosen later, $S_0$ is estimated by
\begin{equation}\label{prop-s1}
\begin{array}{l}
	S_0 \, := \,\displaystyle \sum_{q=0}^{q_0 -1} \frac{m'_{2q} \; \alpha^{aq}}{\Gamma(aq+1)} 
		\, \leq \, c_{q_0} \, \sum_{q=0}^{q_0 -1} \frac{\alpha^{aq}}{\Gamma(aq+1)} \\

	\qquad \leq \,  c_{q_0} \,\displaystyle \sum_{q=0}^{q_0 -1} \frac{(\alpha^a)^q}{\Gamma(q+1)}
		\, \leq \, c_{q_0} \, e^{\alpha^a} \, \leq 2 \, c_{q_0} \, ,
\end{array}
\end{equation}
for the parameter  $\alpha$ small enough to satisfy
\begin{equation}\label{alpha2}
   \alpha <   (\ln{2})^{1/a}  \ , \ \ \mbox{ or equivalently,}  \ \  e^{\alpha^a} \leq 2.
\end{equation}

The second term $S_1$ is crucial, as it brings the negative contribution that will yield uniform in $n$ and  global in time control to an ordinary differential inequality for  $ \cE^n_a(\alpha, t)$.
In fact, $S_1$  is controlled from below by   $ \cI^n_{a\gamma}(\alpha, t)$ as follows.
\begin{equation*}
	S_1 \, := \,\displaystyle \sum_{q=q_0}^n \frac{m_{2q+\gamma} \; \alpha^{aq}}{\Gamma(aq+1)} 
		\, = \, \cI^n_{a,\gamma} \, - \,
			\displaystyle \sum_{q=0}^{q_0-1} \frac{m_{2q+\gamma} \; \alpha^{aq}}{\Gamma(aq+1)}, 
\end{equation*}
so using \eqref{cq0_1} and the estimate just obtained for $S_0$ in \eqref{prop-s1}, yields the bound from below  
\begin{equation}\label{prop-s2}
	S_1 \, \geq  \, \cI^n_{a,\gamma} \, - \,
		c_{q_0} \sum_{q=0}^{q_0-1} \frac{\alpha^{aq}}{\Gamma(aq+1)} 
		\, \geq \,  \cI^n_{a,\gamma} \, - \, 2c_{q_0}.
\end{equation}

The sum $S_2$ is a part of the partial sum $\cE^n_a$, so
\begin{equation}\label{prop-s3}
	S_2 \, \leq \, \cE^n_a.
\end{equation}
While this term is positive it will need to be lower order than the one in the negative part of the right hand side.

Finally, we estimate $S_3$ and show that it can be bounded by the product of $ \cE^n_a(\alpha, t)$ and $ \cI^n_{a,\gamma}(\alpha, t)$.  We work out the details of the first term in the sum $S_3 := S_{3,1} + S_{3,2} $, that is the one with $m_{2k+\gamma} \, m_{2(q-k)}$. The other sum with $m_{2k} \, m_{2(q-k)+\gamma}$ can be bounded by following a similar strategy. 
In order to generate both the partial sum $ \cE^n_a(\alpha, t)$ and the shifted one
$ \cI^n_{a,\gamma}(\alpha, t)$, we make use of the following well known  relations between Gamma and Beta functions (see also Appendix \ref{appx}).
  \begin{align}\label{beta-S4}
  B(ak+1, a(q-k)+1)\  &=\  \frac{\Gamma(ak+1) \, \Gamma(a(q-k)+1)}{\Gamma(\,(ak +1) + (a(q-k) +1) \, )} \nonumber \\
  &\ \ \\
  &= \  \frac{\Gamma(ak+1) \, \Gamma(a(q-k) +1)}{\Gamma(aq+2)}\  . \nonumber
  \end{align}
Therefore, multiplying  and dividing 
products of moments 
$m_{2 k+\gamma}  m_{2(q-k)}$  in $S_{3,1}$,
by  ${\Gamma(ak+1) \Gamma(a(q-k)+1)}$ 
yields

\begin{align*}
	S_{3,1}  & := \, \displaystyle \sum_{q=q_0}^n 	
		\frac{\varepsilon_q \, q \, (q-1) \, \alpha^{aq}}{\Gamma(aq+1)}
		\, \sum_{k=1}^{k_q} \binom{q-2}{k-1} m_{2k + \gamma} \, m_{2(q-k)}  \\
	& \, = \,  \displaystyle\sum_{q=q_0}^n \varepsilon_q \, q \, (q-1)
		\sum_{k=1}^{k_q} \binom{q-2}{k-1}
			\frac{m_{2k + \gamma} \alpha^{ak}}{\Gamma(ak+1)} \,
			\frac{m_{2(q-k)} \alpha^{a(q-k)}}{\Gamma(a(q-k)+1)}\, \\
		&\qquad\qquad\qquad\qquad\qquad \,B(ak+1, a(q-k)+1) \,
			\displaystyle\frac{\Gamma(aq+2)}{\Gamma(aq+1)}\, .
\end{align*}
Note that the factors $\frac{m_{2k + \gamma} \alpha^{ak}}{\Gamma(ak+1)}$ and $\frac{m_{2(q-k)} \alpha^{a(q-k)}}{\Gamma(a(q-k)+1)}$ are the building blocks of 
$ \cI^n_{a,\gamma}(\alpha, t)$ and $ \cE^n_a(\alpha, t)$, respectively.

Next, since $\Gamma(aq+2)/\Gamma(aq+1) = aq+1$, using the inequality $\sum_k a_k \, b_k \leq \sum_k a_k \, \sum_k b_k,$ it follows that
\begin{equation}\label{S4-beforeBe}
\begin{array}{l}
	S_{3,1} \, \leq \,  \displaystyle \sum_{q=q_0}^n 	\varepsilon_q \, (aq+1) \, q\, (q-1)
		\left( \displaystyle\sum_{k=1}^{k_q} \binom{q-2}{k-1} \, B(ak+1, a(q-k)+1)  \right) \,  \\
		\qquad \qquad\qquad\qquad\qquad \qquad \qquad 
		\left( \displaystyle \sum_{k=1}^{k_q} \frac{m_{2k + \gamma} \alpha^{ak}}{\Gamma(ak+1)} \,
			\frac{m_{2(q-k)} \alpha^{a(q-k)}}{\Gamma(a(q-k)+1)}  \right). 
\end{array}
\end{equation}

Next we  show that the factor 
$$ (aq+1) \, q\, (q-1)
		\left( \displaystyle\sum_{k=1}^{k_q} \binom{q-2}{k-1} \, B(ak+1, a(q-k)+1)  \right) \,
$$ 
on the right hand side of \eqref{S4-beforeBe} grows at most as $q^{2-a}$. 		 
Indeed,  using Lemma \ref{lem-sumB1},  the sum of the Beta functions  is bounded by $C_a (aq)^{-(1+a)}$.
Therefore,  $S_{3,1}$ is estimated by
\eqn\label{S41}
	S_{3,1} \, \le \, \displaystyle  C_a \sum_{q=q_0}^n 	\varepsilon_q \, q^{2-a} 
		\left( \displaystyle \sum_{k=1}^{k_q} \frac{m_{2k + \gamma} \alpha^{ak}}{\Gamma(ak+1)} \,
			\frac{m_{2(q-k)} \alpha^{a(q-k)}}{\Gamma(a(q-k)+1)}  \right),
\eeqn
where $C_a$ is a (possibly different) constant that depends on $a$. Now,  by Lemma  \ref{averaged-pov},  the factor $ \varepsilon_q \, q^{2-a}$ decreases monotonically to zero as $q\rightarrow \infty$ if the angular kernel $b(\cos{\theta})$ satisfies \eqref{eq-nc} with $\beta = 2a -2$. Hence,
\begin{equation}\label{q01}
\varepsilon_{q} \, q^{2-a}  \leq \varepsilon_{q_0} \, q_0^{2-a}    \, ,   \qquad\ \mbox{for any} \ q\ge q_0 \, ,
\end{equation}
and thus  the term $S_{3,1} $ is further estimated by
\begin{align*}
	S_{3,1} \leq  C_a \, \varepsilon_{q_0} \; q_0^{2-a}  \displaystyle \sum_{q=q_0}^n
		\displaystyle \sum_{k=1}^{k_q} \frac{m_{2k + \gamma} \alpha^{ak}}{\Gamma(ak+1)} \,
			\frac{m_{2(q-k)} \alpha^{a(q-k)}}{\Gamma(a(q-k)+1)}.
\end{align*}
Finally, inspired by \cite{alcagamo13}, we bound this double sum by the product of partial sums $\cE^n_a \, \cI^n_{a,\gamma}$. To achieve that, change the order of summation to obtain
\eqn\label{prop-s4}
\begin{array}{lcl}
	S_{3,1} &\le &  C_a\,\varepsilon_{q_0} \; q_0^{2-a} \, \displaystyle \sum_{k=0}^{k_n} 
		\sum_{\max\{q_0, 2k-1\}}^n
		 \frac{m_{2k + \gamma} \alpha^{ak}}{\Gamma(ak+1)} \,
		\frac{m_{2(q-k)} \alpha^{a(q-k)}}{\Gamma(a(q-k)+1)} \\

	&\leq&   C_a \, \varepsilon_{q_0} \; q_0^{2-a} \,  \displaystyle \sum_{k=0}^{k_n} 
		\frac{m_{2k + \gamma} \alpha^{ak}}{\Gamma(ak+1)} \,
		\sum_{\max\{q_0, 2k-1\}}^n
		\frac{m_{2(q-k)} \alpha^{a(q-k)}}{\Gamma(a(q-k)+1)} \\ \\
	&\leq&  C_a \,  \varepsilon_{q_0} \; q_0^{2-a} \, \cI^n_{a,\gamma} \,\cE^n_a \, ,
\end{array}
\eeqn
obtaining the expected control of $S_{3,1}$. As mentioned above the estimate of the companion sum  $S_{3,2}$ follows in a similar way, so we can assert
\begin{equation}\label{q01}
S_3 \, \le \, C_a\,  \varepsilon_{q_0} \; q_0^{2-a} \,\cE^n_a (t) \, \cI^n_{a,\gamma}(t). 
\end{equation}

Next we obtain an ordinary differential inequality for $\cE^n_a (t)$ depending only on data parameters and $\cI^n_{a,\gamma}(t)$. Indeed, combining \eqref{prop-s1}, \eqref{prop-s2}, \eqref{prop-s3} and \eqref{prop-s4} with \eqref{prop-4sums} yields 
\begin{equation}\label{prop-ei}
\displaystyle \frac{d }{dt} \cE^n_a  \, \leq \, 
     \,  - K_1 \,\cI^n_{a,\gamma} \, + \,  2 \,c_{q_0}(1+K_1) \, + \,  K_2 \,  \cE^n_a
	  \, + \,  \varepsilon_{q_0} \; q_0^{2-a} \,C_a \,  K_3  \,  \cI^n_{a,\gamma}  \,\cE^n_a \, .
\end{equation}
  
Since, by the definition of time $T_n$, the partial sum $\cE^n_a$ is bounded by the constant $4M_0$ on the time interval $[0, T_n]$, we can estimate, uniformly in $n$, the following two terms in \eqref{prop-ei}
 \begin{equation}\label{q03}
2 \,c_{q_0}(1+K_1) +  K_2 \,  \cE^n_a \le 2 \,c_{q_0}(1+K_1) + 4  K_2 \,M_0 =:  \cK_0,
  \end{equation} 
where $\cK_0$ depends only on the initial data and $q_0$ (still to be determined). 

Thus, factoring out $\cI^n_{a, \gamma}$ from the remaining two terms in 
\eqref{prop-ei} yields
\begin{align}\label{prop-ei2}
\displaystyle \frac{d }{dt} \cE^n_a 
	& \, \leq \, - \,\cI^n_{a, \gamma} \, \Big( K_1 \, - \, \varepsilon_{q_0} \; q_0^{2-a} \, C_a \, K_3 \, \cE^n_a \Big)
		 \, + \, \cK_0 \nonumber \\
	& \leq \,  - \,\cI^n_{a, \gamma} \, \Big( K_1 \, - \, 4 \varepsilon_{q_0} \; q_0^{2-a} \, C_a \, K_3 \, M_0 \Big)
		 \, + \, \cK_0,
\end{align}
where in the last inequality we again used that, by the definition of $T_n$, we have $\cE^n_a \leq 4M_0$ on the closed interval $[0, T_n]$.  Now, since $\varepsilon_{q_0} \; q_0^{2-a}$ converges to zero as $q_0$ tends to infinity (by Lemma  \ref{averaged-pov} as $b(\cos{\theta})$ satisfies \eqref{eq-nc} with $\beta = 2a -2$ ), we can choose large enough $q_0$ so that
 \begin{equation}\label{q04} 
K_1 \, - \, 4 \varepsilon_{q_0} \; q_0^{2-a} \, C_a \, K_3 \, M_0>   \frac{K_1}2.
   \end{equation} 
For such choice of $q_0$ we then have
\eqn \label{prop-minus i}
	\displaystyle \frac{d }{dt} \cE^n_a
	  \, \leq - \, \frac{K_1}{2} \, \cI^n_{a, \gamma}  \, + \, \cK_0 \; .
\eeqn

The final step consists  in finding  a lower bound for $\cI^n_{a,\gamma}$ in terms of $\cE^n_a$. 
The following calculation follows from a revised form of the lower bound given in \cite{alcagamo13},
\begin{equation}\label{prop-lower}
 \begin{array}{lcl}
   \cI^n_{a, \gamma}(t) &:=& \displaystyle \sum_{q=0}^n \frac{m_{2 q + \gamma} \  \alpha^{aq}}{\Gamma(aq+1)} 
    
      \; \geq \;  \displaystyle   \sum_{q=0}^n  \int_{\bra v \ket \geq \frac{1}{\sqrt{\alpha}}} 
	    \frac{ \bra v \ket^{2q + \gamma} \  \alpha^{aq}}{\Gamma(aq+1)} \; f(t,v) \; dv \\ \\
    
       & \geq & \frac{1}{\alpha^{\gamma/2}} \displaystyle \sum_{q=0}^n 
       \displaystyle \int_{\bra v \ket \geq \frac{1}{\sqrt{\alpha}}} \frac{ \bra v \ket^{2q} \  \alpha^{aq}}{\Gamma(aq+1)}  \; f(t,v) \; dv \\ \\

       &=& \frac{1}{\alpha^{\gamma/2}}  \left( 
           \displaystyle \sum_{q=0}^n \displaystyle \int_{\R^d} \frac{ \bra v \ket^{2q} \  \alpha^{aq}}{\Gamma(aq+1)}  \;  f(t,v) \; dv 
           
           \; - \; \displaystyle \sum_{q=0}^n \displaystyle \int_{\bra v \ket < \frac{1}{\sqrt{\alpha}}} 
            \frac{ \bra v \ket^{2q} \  \alpha^{aq}}{\Gamma(aq+1)}  \; f(t,v) \; dv  \right) \\ \\

        &\geq& \frac{1}{\alpha^{\gamma/2}}
            \left( \cE^n_a(t) \; - \; \displaystyle \sum_{q=0}^n \displaystyle \int_{\R^d} 
            \frac{ \alpha^{-q} \  \alpha^{aq}}{\Gamma(aq+1)}  \;  f(t,v) \; dv  \right) \\ \\
            
        &\geq& \frac{1}{\alpha^{\gamma/2}} \left(\cE^n_a(t) \; - \;  
          m_0 \; \displaystyle \sum_{q=0}^{\infty}  
		\frac{\alpha^{q(a-1)}}{\Gamma(aq+1)}\right) \\ \\

         &>& \frac{1}{\alpha^{\frac{\gamma}{2}}}\,\cE^n_a(t) \; - \; 
              \frac{1}{\alpha^{\frac{\gamma}{2}}} \, m_0 \; e^{\alpha^{a-1}}.
        \end{array}
\end{equation}

Therefore, applying inequality \eqref{prop-lower} to \eqref{prop-minus i} yields the following linear differential inequality for the partial sum $\cE^n_a$
\begin{equation*}
	\frac{d}{dt} \cE^n_a(t) \, \leq \,  - \,\frac{K_1}{2\, \alpha^{\frac{\gamma}{2}}} \cE^n_a (t)
		 \, + \, \frac{K_1 \, m_0 \, e^{\alpha^{1-a}}}{ 2 \alpha^{\frac{\gamma}{2}}}
		\, +   \cK_{0}.
\end{equation*}
Then, by the maximum principle for ordinary differential inequalities, 
\begin{equation*}
\begin{array}{lcl}
	 \cE^n_{2/s} (t) =  \cE^n_a(t) & \leq & \displaystyle  M_0 \, + \, \frac{2 \, \alpha^{\gamma/2}}{K_1} 
		\left(  \frac{K_1 \, m_0 \, e^{\alpha^{1-a}}}{ 2 \alpha^{\frac{\gamma}{2}}}
		\, + \,  \cK_{0}.\right) \\ 

	& = &  M_0\,  +\,  m_0  \, e^{\alpha^{1-a}}  \, + \, \frac{2 \, \alpha^{\gamma/2}}{K_1}  \, \cK_0 \,  \\

	&\leq&  4M_0,
\end{array}
\end{equation*}
provided that   $\alpha=\alpha_1$ is chosen sufficiently small so that 
\begin{equation}\label{alpha-choice}
   m_0  \, e^{\alpha_1^{1-a}}  \, + \, \frac{2 \, \alpha_1^{\gamma/2}}{K_1}  \, \cK_0  \, < \, 3 M_0\, .
 \end{equation}
 which is possible since $a>1$.

In conclusion, if $q_0$ is chosen according to \eqref{q04}, and hence depending only on the initial data, initial Mittag-Leffler moment, $\gamma$ and $A_2$ from \eqref{eq-nc}, and if $\alpha = \min \{ \alpha_0, (\ln 2)^{1/\alpha}, \alpha_1\}$, from \eqref{alpha-choice}, we have that the {\it strict} inequality $\cE^n_a(t) < 4M_0$ holds on the {\it closed} interval $[0, T_n]$ uniformly in $n$.  Therefore,  invoking the global continuity of $\cE^n_a(t) $ once more,  the set of time $t$ for $\cE^n_a(t) < 4M_0$   holds  on a slightly larger half-open time interval  $[0, T_n+\mu)$, with $\mu >0$. This would contradict maximality of the definition of $T_n$, unless $T_n = +\infty$. Hence, we conclude that $T_n = +\infty$ for all $n$. Therefore, we in fact have that
\begin{align*}
\cE^n_a(\alpha, t) < 4M_0, \quad \mbox{for all} \; t\geq 0, \;\; \mbox{for all} \, \, n \in \N.
\end{align*}
Thus, by letting $n \rightarrow +\infty$, we conclude that 
$\cE^\infty_a(\alpha, t) < 4M_0$  for all $t\geq 0$. That is,
\begin{equation}\label{final-prop1}
	\displaystyle\int_{\R^d}  f(t,v) \;\; \cE_{2/s} (\alpha^{2/s} \, \bra v \ket^2 ) \; dv \,
		    < 4M_0, \quad \mbox{for all} \,\, t \ge 0.
\end{equation}

Estimate \eqref{final-prop1} shows that  the solution of the Boltzmann equation with finite initial Mittag-Leffler moment of order $s$ and rate $\alpha_0$, will propagate Mittag-Leffler moments with the same order $s$ and rate $\alpha$ satisfying $\alpha = \min \{ \alpha_0, (\ln 2)^{1/\alpha}, \alpha_1\}$. This concludes the proof part{\bf(b)} of Theorem~\ref{thm}. 
\end{proof}

Part{\bf (a)} of Theorem~\ref{thm} concerns the generation of Mittag-Leffler or exponential moments. This is proven in the next section.

\section{Proof of exponential moments' generation } \label{sec-exp}

\begin{proof}[Proof of Theorem \ref{thm} (a)]
\noindent Notation and strategy are similar to those in the proof of Theorem \ref{thm} (b), contained in Section \ref{sec-ml}. The goal is to find a positive and bounded real valued number $\alpha$ such that the solution $f(v,t)$ of the Boltzmann equation will have an exponential moment,  of order $\gamma$ and rate $\alpha\min\{ t,1\}$, generated for every positive time $t$, from the fact that the initial data $f_0(v)$ has finite energy given by $M_0^*:= m_2(0)$. 

The proof works with the exponential forms of order $\gamma$. From this viewpoint, the difference
with respect to the propagation of Mittag-Leffler moments result obtained in the previous section is that
the propagation result had to be established for every order $s \in (0, 2)$,
while now the generation of Mittag-Leffler moments  of order $s$  and rate $\alpha$ implies generation of such moments for all smaller orders $0<s$. Hence, it suffices to consider just the order $\boldsymbol{s=\gamma}$. 

First for an arbitrary positive and bounded number $\alpha$, 
we  denote the $n$-th partial sum of the exponential moment of order $\gamma$ by $E^n_\gamma(\alpha t, t)  $ and the corresponding one in which polynomial moments are shifted by $\gamma$ by 
$I^n_{\gamma,\gamma}(\alpha t, t) $, that is
\begin{align}\label{E}
 E^n_\gamma (\alpha t, t) \ &=\  \displaystyle \sum_{q=0}^n \frac{m_{\gamma q}(t) \; (\alpha t)^q}{\Gamma(q+1)} \ =\ \displaystyle \sum_{q=0}^n \frac{m_{\gamma q}(t) \; (\alpha t)^q}{q!} \\
 \cI^n_{\gamma,\gamma} (\alpha t,t) \ &= \ \displaystyle \sum_{q=0}^n \frac{m_{\gamma q + \gamma}(t) \; (\alpha t)^q}{\Gamma(q+1)} \ =\ 
 \displaystyle \sum_{q=0}^n \frac{m_{\gamma q + \gamma}(t) \; (\alpha t)^q}{q!} \, . \label{I}
 \end{align}
The form $E^n_\gamma(\alpha t, t)  $ is the exponential moment of order $\gamma$ with rate $\alpha$ of the probability density $f$ in the  Mittag-Leffler representation.

\noindent Define the time $T^*_n$ as follows
\begin{equation}\label{T_ng}
 T^*_n:= \min \left\{ 1, \;\;  \sup \left\{ t \geq 0 \;|\; E^n_\gamma(\alpha \tau , \tau) <  4M^*_0, \quad \mbox{for all}\,\, \tau \in [0, t) \,\,    \right\} \right\}.
\end{equation}
$T^*_n$ is well defined where now the constant $M^*_0$ is the sum of the initial conserved  mass and energy, i.e. $ M^*_0:=M^*_0(t)=\int f(v,t)\langle v\rangle^2 dv = \int f_0(v) \langle v\rangle^2 dv $ as in  the  initial condition for the generation of Mittag-Leffler moments estimate
 \eqref{eq-gener}.  Since moments are uniformly in time generated for the hard potential case, even for angular non-cutoff regimes (see \cite{we96}), then every finite sum $\cE^n_a(\alpha t, t)$ is well defined and continuous in time.  Note that for $t=0$, we have that $E^n_\gamma (\alpha 0, 0) = m_0 <  4M^*_0$. Then, as  in the previous case, continuity in time of partial sums $\cE^n_a(\alpha t, t)$ implies that $\cE^n_a(\alpha t, t) < 4M^*_0$ 
holds for $t$ on some positive time interval  $[0, t^*_n)$, which implies that $T^*_n >0$.   In addition,  the definition \eqref{T_ng} implies that $T^*_n\le 1$ for all $n \in \N$.

As we did in the previous section for the proof of propagation of Mittag-Leffler moments, we search for an ordinary differential inequality for $E^n_\gamma(\alpha t, t) $, depending only on data parameters and on $ \cI^n_{\gamma,\gamma} (\alpha t,t)$, for a positive and bounded real valued $\alpha$ to be found and characterized. 

To this end, we start by computing  
\begin{align}\label{gen-4sums0}
\displaystyle \frac{d }{dt} E^n_\gamma(\alpha t, t) 
    \; &=  \; \alpha  \sum_{q=1}^{n} \frac{m_{\gamma q}(t)\,  (\alpha t)^{q-1} }{(q-1)!}\; +\; \displaystyle\sum_{q=0}^{n} \displaystyle \frac{m'_{\gamma q}(t)\,  (\alpha t)^q}{q!}  \\
        &= \; \alpha  \sum_{q=1}^{n} \frac{m_{\gamma q}(t)\, (\alpha t)^{q-1} }{(q-1)!}   \; + \; \displaystyle\sum_{q=0}^{q_0-1} \displaystyle \frac{m'_{\gamma q}(t)\, (\alpha t)^q}{q!}
        \;  +\; \displaystyle\sum_{q=q_0}^{n} \displaystyle \frac{m'_{\gamma q}(t)\, (\alpha t)^q}{q!}  \, ,      \nonumber
\end{align}
where index $q_0$ will be fixed later. The first sum in this identity is reindexed by from $q-1$ to $q$ and estimated by $I^n_{\gamma, \gamma}(\alpha t, t)$ (defined in \eqref{I}), as follows 
\begin{equation*} 
 \sum_{q=0}^{n-1} \frac{m_{\gamma q + \gamma}(t)\, (\alpha t)^{q} }{q!}  \le  \sum_{q=0}^{n} \frac{m_{\gamma q + \gamma}(t)\, (\alpha t)^{q} }{q!}  =   I^n_{\gamma, \gamma}(\alpha t, t).
\end{equation*}

Next, replacing the term $m'_{\gamma q}(t)$ by the upper bound in the ordinary differential inequality (\ref{thm-b}) just on the sums starting from $q_0$, for $\alpha>0$, and for

\begin{equation}\label{kq*}
k_{q^*}:= \lfloor\frac q4 - \frac 1{\gamma} + \frac{3}{2}\rfloor  \  := \  \mbox{ integer part of}\  \frac q4 - \frac1{\gamma} + \frac{3}{2} \, , 
\end{equation}
\begin{align}\label{gen-4sums}
\displaystyle \frac{d }{dt} E^n_\gamma(\alpha t,t)  \; &\leq \; 
 \alpha  I^n_{\gamma, \gamma}(\alpha t, t)\; +\; \displaystyle\sum_{q=0}^{q_0-1} \displaystyle \frac{m'_{\gamma q}(t)\, (\alpha t)^q}{q!} \nonumber\\
   	\qquad \, &- \, K_1 \displaystyle \sum_{q=q_0}^{n} \frac{m_{\gamma q + \gamma}(t)\,  (\alpha t)^q}{q!}   
	 \, +\,  K_2  \displaystyle \sum_{q=q_0}^{n} \frac{m_{\gamma q}(t)\,  (\alpha t)^q}{q!} \\
	\qquad &+ \, K_3 \displaystyle \sum_{q=q_0}^{n} 
	    \frac{\varepsilon_{{\gamma q/2}}  \frac{\gamma q}2 (\frac{\gamma q}2-1) \; (\alpha t)^q}{q!} 	    
	     \displaystyle \sum_{k=1}^{k_{q_*}} \binom{\frac q2 - \frac 2{\gamma} }{k-1} \nonumber \\
	&\ \qquad  \qquad \qquad  \left( (m_{2\gamma k+\gamma}(t)\,  m_{\gamma q-2\gamma k}(t)\, 
	      +\,m_{2\gamma k}(t)\,  m_{\gamma q-2\gamma k+\gamma}(t)\, \right) \nonumber \\
	      	&\     \qquad\nonumber   \\
	\qquad \; &=: \;    \alpha  \cI^n_{\gamma, \gamma}(\alpha t,t) \,+ \, S_0 	
- K_1 \, S_1 \, + \, K_2 \, S_2  \,+ \,  K_3 \,S_3 \, . \nonumber
	\end{align}

We stress the positive constant $K_1 = A_2\, C_\gamma$ depends only on the collision cross section with $A_2$ defined in \eqref{part1}, and $C_\gamma$ only depending on $0<\gamma\le 1$.  In the sequel, we will estimate the terms in \eqref{gen-4sums} to show that the negative one is of higher order uniformly in time $t$, for a choice of $\alpha$ and $q_0$ that depend only on the initial and collision kernel data. 

The term $S_0$ can be bounded by terms that depends on the initial data and the parameters of the collision cross section.
Indeed,  as was the case for the propagation estimates,  from Lemma \ref{prop-genpoly}, setting $r=\gamma$ in \eqref{bBrp}, the generated polynomial moments can be estimated  by
\begin{align}\label{gen-s00}
m_{\gamma q}(t) & \leq \bB_{\gamma q} \; \max_{t>0}\{1, t^{-q} \}\qquad\mbox{and} \\
m'_{\gamma q}(t)\,  &\leq  B_{\gamma q}   m_{\gamma q}(t)\, \leq \,  B_{\gamma q}   \, \bB_{\gamma q} \; \max_{t>0}\{1, t^{- q}\}  \nonumber \\
\end{align}
where  the constant ${\bf B}_{\gamma q}$, now from \eqref{bBrp},  also depends on $m_2(0)$, $\gamma$, $q$  and $A_2$ from condition \eqref{eq-nc}. 
Next, for $q_0$ fixed, to be chosen later, set
\begin{align}
c^*_{q_0}: = \max_{q \in \{ 0, \dots, q_0-1 \}} \{\bB_{\gamma q}, \,  B_{\gamma q} \, \bB_{\gamma q}\}\, , 
\end{align}
and then, both the $2q$-moments and its derivatives are  controlled by $c^*_{q_0}$  as follows
\begin{align} 
m_{\gamma q}(t) , m'_{\gamma q}(t)   \ \leq c^*_{q_0} \; \max_{t>0}\{1, t^{- q}\}, \quad \mbox{for all } \, q \in \{0,\dots,q_0-1\}\, . 
\end{align}

\medskip

Thus we can estimate $S_0$, for a fixed $q_0$ to be defined later,  by 
\begin{align}
  S_0 &:= \displaystyle \sum_{q=0}^{q_0 -1} \frac{m'_{\gamma q}(t)\,  (\alpha t)^q}{q!}    \nonumber \\
	& \leq c^*_{q_0} \; \max_{t>0}\{1, t^{- q}\} \,\displaystyle \sum_{q=0}^{q_0 -1}
		 \frac{ (\alpha t)^q } {q!} \nonumber \\      
	&\leq   c^*_{q_0} \; \max_{t>0}\{t^q, 1\} \, \displaystyle \sum_{q=0}^{q_0 -1}
		 \frac{\alpha^q }{q!} \label{gen-s10} \\
           &\leq  c^*_{q_0} e^{\alpha} \leq  2 \, c^*_{q_0},   \label{gen-s1}
\end{align}
uniformly in $t\in [0,T^*_n] \subset [0,1]$,  for any $\alpha \leq \ln 2$. To obtain inequality \eqref{gen-s10} we used that $t \leq T^*_n \leq 1$.

 The sum $S_2$ is a part of the partial sum $E^n_\gamma$, hence
\begin{align}\label{gen-s3}
S_2 := \displaystyle \sum_{q=q_0}^n \frac{m_{\gamma q} (\alpha t)^q}{q!} \leq E^n_{\gamma} (\alpha t, t).
\end{align}

 The sum $S_1$ needs to be bounded from below because of the negativity of the term $K_1\, S_1$.
 To this end,   using again the time dependent  estimates for moments  from   Proposition \ref{prop-genpoly},   the estimate from below follows for $t\in (0, T^*_n] \subset (0,1]$ as
 \begin{equation}\label{gen-s2}
  \begin{array}{lcl}
    S_1 &:=&   \displaystyle\sum_{q=q_0}^{n} \frac{m_{\gamma q + \gamma}(t)\,  (\alpha t)^q}{q!}  \, =\, 
      I^n_{\gamma,\gamma} (\alpha t,t) - \displaystyle \sum_{q=0}^{q_0 -1} \frac{m_{\gamma q+\gamma} (\alpha t)^q}{q!} \\
        
      &\geq& I^n_{\gamma,\gamma}(\alpha t,t) - c^*_{q_0}  \displaystyle \sum_{q=0}^{q_0 -1} 
	  \frac{ \max_{0<t\le1}\{1, t^{-(\gamma q+\gamma)/\gamma}\} (\alpha t)^q}{q!} \\
	  
      &\geq& I^n_{\gamma,\gamma}(\alpha t, t) -  c^*_{q_0}  \displaystyle \sum_{q=0}^{q_0 -1} 
       \frac{  t^{-q-1}  (\alpha t)^q}{q!} \\
              
      &=& I^n_{\gamma,\gamma}(\alpha t, t) - \frac{ c^*_{q_0}}{t}  \displaystyle \sum_{q=0}^{q_0 -1} 
       \frac{ \alpha^q}{q!} \\ \\

       &\geq& I^n_{\gamma,\gamma}(\alpha t, t) - \frac{ c^*_{q_0}}{t} e^{\alpha} \\ \\

   &\geq& I^n_{\gamma,\gamma}(\alpha t, t) - \frac{2  c^*_{q_0}}{t} . 
       
   \end{array}
 \end{equation}

The estimate for the  double sum term  in $S_3$  uses an analogous treatment to the one in the previous section to obtain  Mittag-Leffler moment's propagation.  More precisely,  set $S_3 :=  S_{3,1} +  S_{3,2} $,  and we make use of the identity \eqref{Gproperty} written in the following format 
 \begin{equation} 
 \Gamma(2k+1) \Gamma(q-2k+1) =  B(2k+1, q-2k+1) \, \Gamma(q+2)
 \end{equation} 
 to obtain
\begin{align}\label{ss31}
 S_{3,1}  &:= \displaystyle \sum_{q=q_0}^{n} \varepsilon_{{\gamma q}/2} 
		\frac{\gamma q}{2} \left(\frac{\gamma q}{2} - 1\right)       
		\displaystyle \sum_{k=1}^{k_{q_*}} \binom{\frac{q}{2} - \frac{2}{\gamma}}{k-1} 
	 	\frac{m_{2\gamma k+\gamma}(t) \, (\alpha t)^{2k}}{\Gamma(2k+1)}   
		\frac{m_{\gamma q-2\gamma k}(t) \, (\alpha t)^{q-2k}}{\Gamma(q-2k+1)} \nonumber \\
	&\ \qquad   \qquad \qquad \qquad \qquad  \qquad \qquad B(2k+1, q-2k+1) \frac{\Gamma(q+2)}{\Gamma(q+1)}  \\ 
	& \;  \leq  \; \varepsilon_{\gamma q_0/2} \displaystyle \sum_{q=q_0}^{n}  (q+1) 
		\frac{\gamma q}{2} \left(\frac{\gamma q}{2} - 1\right)          
	         \left( \displaystyle \sum_{k=1}^{k_{q_*}} 
		 \frac{m_{2\gamma k+\gamma}(t) \; (\alpha t)^{2k}}{\Gamma(2k+1)}
           	\frac{m_{\gamma q-2\gamma k}(t) \; (\alpha t)^{q-2k}}{ \Gamma(q-2k +1 )}  \right)          \nonumber \\
	& \qquad\qquad\qquad\qquad\qquad\qquad 
 \left( \displaystyle \sum_{k=1}^{k_{q_*}}   \binom{\frac{q}{2} - \frac{2}{\gamma}}{k-1}  \;  B(2k+1, q-2k+1) \right)  \, .\nonumber
\end{align}

\noindent The last inequality was obtained via the inequality
$\sum_k a_k b_k \leq \sum_k a_k \; \sum_k b_k$, and the fact that $\varepsilon_q$ decreases in $q$. 
Again,  using the estimate of Lemma~\ref{lem-sumB2}, the sum of the Beta functions  is bounded by $C q^{-3}$, with $C$ a uniform constant independent of $q$. Therefore, 
\begin{align}\label{ss01}
 (q+1) \frac{\gamma q}{2} \left(\frac{\gamma q}{2} - 1\right)   & \left( \displaystyle \sum_{k=1}^{k_{q_*}}   \binom{\frac{q}{2} - \frac{2}{\gamma}}{k-1}  \;  B(2k+1, q-2k+1) \right) \nonumber  \\
 &\ \le \,  (q+1) \frac{\gamma q}{2} \left(\frac{\gamma q}{2} - 1\right) q^{-3} \ \le \  C_\gamma \, ,
\end{align}
uniformly in $q$. Then,  estimating  the right hand side of \eqref{ss31}  by the estimate \eqref{ss01} just above, yields
\begin{equation}\label{ss1}
 S_{3,1} \leq K_3 \; C_\gamma \;  \varepsilon_{\gamma q_0/2} \displaystyle \sum_{q=q_0}^n
  \left( \displaystyle \sum_{k=1}^{k_{q_*}}  \frac{m_{2\gamma k+\gamma}(t) \; (\alpha t)^{2k}}{\Gamma(2k+1)}
              \frac{m_{\gamma q-2\gamma k}(t) \; (\alpha t)^{q-2k}}{ \Gamma(q-2k +1)}  \right).    
\end{equation}

Finally, as was the case for the propagation estimates in the previous section, changing the order of summation in the right hand side of  \eqref{ss1}
yields a  control   by a factor  $E^n_\gamma(\alpha t,t) \,  \cI^n_{\gamma,\gamma}(\alpha t,t) $ as follows.  
Recalling the definition   of $k_{q_*}$ from \eqref{kq*}, and evaluating it for $n$ instead of $q$ yields
\begin{align*}
 S_{3,1} 
& \; \leq \;   C_\gamma \, \varepsilon_{\gamma q_0/2}
         \displaystyle \sum_{k=0}^{\left[ \frac{n}{4} + \frac{3}{2} - \frac{1}{\gamma} \right]}  \sum_{q=\max\{q_0, 4k-2\}}^{n} 
          \frac{m_{2\gamma k+\gamma} \; (\alpha t)^{2k}}{\Gamma(2k+1)}
           \frac{m_{\gamma q-2\gamma k} (\alpha t)^{q-2k}}{\Gamma(q - 2k +1)} \\
& \; = \;  C_\gamma \, \varepsilon_{\gamma q_0/2}  \displaystyle \sum_{k=0}^{\left[ \frac{n}{4} + \frac{3}{2} - \frac{1}{\gamma}\right]} 
           \frac{m_{2\gamma k+\gamma}(t) \; (\alpha t)^{2k}}{\Gamma(2k+1)} 
           \left(  \sum_{q=\max\{q_0, 4k-2\}}^{n}  \frac{m_{\gamma q - 2\gamma k}(t)  (\alpha t)^{q-2k}}{\Gamma(q - 2k +1)} \right) \\
& \; \leq \;  C_\gamma \, \varepsilon_{\gamma q_0/2} \;   \displaystyle 
          \sum_{k=0}^{\left[ \frac{n}{4} + \frac{3}{2} - \frac{1}{\gamma}\right]} 
           \frac{m_{2\gamma k+\gamma}(t) \; (\alpha t)^{2k}}{\Gamma(2k+1)}    \, E^n_\gamma (\alpha t,t)\\ \\
& \; \leq \;  C_\gamma \, \varepsilon_{\gamma q_0/2} \; \cI^n_{\gamma,\gamma}(\alpha t,t) \, E^n_\gamma (\alpha t,t) \, . 
\end{align*}

\noindent Analogous estimate can be obtained for  $S_{3,2}$, so overall we have
\begin{equation}\label{gen-s4}
  S_3 \ \le \  2 C_\gamma  \varepsilon_{\gamma q_0/2} \;  \cI^n_{\gamma,\gamma}(\alpha t,t) \; E^n_\gamma (\alpha t,t)  \; .
\end{equation}

\noindent  Therefore, combining  estimates (\ref{gen-s1}), \eqref{gen-s2}, \eqref{gen-s3} and (\ref{gen-s4}) with \eqref{gen-4sums} yields the following differential inequality 
for  $E^n_\gamma = E^n_\gamma (\alpha t,t)$ depending on $\cI^n_{\gamma, \gamma} = \cI^n_{\gamma, \gamma}(\alpha t,t)$,
\begin{equation*}\label{ode-prop}
 \frac{d}{dt} E^n_{\gamma}  \leq 2 c^*_{q_0} + 
    \left(-K_1 \; \cI^n_{\gamma, \gamma}   + K_1 \; \frac{2 \,c^*_{q_0}}{t} 
        + K_2 \; E^n_{\gamma} 
        + 2 \varepsilon_{\gamma q_0/2} C_\gamma\,  K_3  E^n_\gamma  \; \cI^n_{\gamma, \gamma}\right)     
    + \alpha \cI^n_{\gamma, \gamma}
\end{equation*}
This inequality is the analog to the one in \eqref{prop-ei} for the propagation argument.
Since the  partial sum $E^n_\gamma(\alpha t,t)$ is bounded by $4M^*_0$ on the interval $[0,T^*_n]$, uniformly in $n$ and $T^*_n \le1$, then 
the right hand side of the above inequality is controlled by
\begin{equation*}
  \frac{d}{dt} E^n_{\gamma} (\alpha t,t)
	\leq   -\cI^n_{\gamma, \gamma}(\alpha t,t) \Big( K_1  - 8 M^*_0 \, \varepsilon_{\gamma q_0/2} C_\gamma  K_3  - \alpha \Big)
  + 4M^*_0 \, K_2  +  \frac{2 \, K_1 \,  c^*_{q_0}}{t} + 2  c^*_{q_0}.
\end{equation*}

Next, since  $t \leq T^*_n \leq 1$, then $t^{-1} \geq 1$, so the above estimate is further bounded by
\begin{equation*}
  \frac{d}{dt} E^n_{\gamma}  (\alpha t,t)
	\leq   -\cI^n_{\gamma, \gamma}(\alpha t,t) \Big( K_1  - 8 M^*_0\, \varepsilon_{\gamma q_0/2} C_\gamma K_3   - \alpha \Big)
	  +  \frac{\cK_{q_0}}{t}.
\end{equation*}
with $0 < \cK_{q_0}= 2 c^*_{q_0} +   4M^*_0 K_2 + 2 K_1 c^*_{q_0}$ only depending on data parameters, including $q_0$, independent of $n$.

Finally, since $\varepsilon_{\gamma q_0/2}$ converges to zero as $q_0$ goes to infinity, we can choose large enough $q_0$ 
  and small enough $\alpha$  so that  
 $b(\cos{\theta})$ satisfies \eqref{eq-nc} with $\beta = 2a -2$ ), 
 \begin{equation}\label{gg4} 
 K_1 \, - \, 8\varepsilon_{q_0} \; q_0^{2-a} \, K_3  - \alpha >   \frac{K_1}2 \, ,
   \end{equation} 
which yields 
 \eqn \label{ei}
	\displaystyle \frac{d }{dt} \cE^n_a(\alpha_1 t,  t) 
	  \, \leq - \, \frac{K_1}{2} \, \cI^n_{a, \gamma}(\alpha t,  t)   \, + \,  \frac{\cK_{q_0}}{t} \; .
\eeqn

Therefore, the  final step consists  in finding  a lower bound for $\cI^n_{a,\gamma}(\alpha t,  t)$ in terms of $\cE^n_a(\alpha t,  t)$ as follows
\begin{align}\label{lower}\nonumber
I^n_{\gamma, \gamma} (\alpha t,  t) 
& \; = \; \displaystyle \sum_{q=0}^n \frac{m_{\gamma(q + 1)}(t)  \; (\alpha t)^q}{q!}
	\; = \; \displaystyle\sum_{q=1}^{n+1} \frac{m_{\gamma q}(t) \; (\alpha t)^q}{q!} \frac{q}{\alpha t} \\ 
&\; \geq \; \frac{1}{\alpha t} \displaystyle\sum_{q=3}^{n} \frac{m_{\gamma q}(t) \; (\alpha t)^q}{q!}       
	\;= \;\frac{E^n_\gamma(t, \alpha t) - M^*_0}{\alpha t}.
\end{align}

Combining (\ref{ei}) and (\ref{lower}) yields
\begin{align*}
 \frac{d}{dt} E^n_{\gamma}(\alpha t,  t)  \leq -\frac{1}{t} \left(\frac{K_1(E^n_\gamma - M^*_0)}{2 \alpha} - \cK_{q_0}\right)  =  -\frac{K_1}{2\alpha t} \left( E^n_\gamma - M^*_0 -  \frac{2 \alpha}{K_1} \cK_{q_0}\right) .
\end{align*}
Then choosing a small enough  $\alpha$  such that 
  \begin{equation}\label{g-alpha} 
 M^*_0 + \frac{2\alpha}{K_1}{\cK_{q_0}} < 2 M^*_0  \qquad \mbox{or, equivalently,} \qquad      \alpha <  \frac{K_1  M^*_0}{2\cK_{q_0}}   \, ,
   \end{equation}
yields  
 \begin{equation}\label{qq5}
 \frac{d}{dt} E^n_{\gamma}(\alpha t,  t)  \leq -\frac{K_1}{2\alpha t} \left( E^n_\gamma(\alpha t,  t) - 2  M^*_0\right).
 \end{equation}

 Then, by a comparison argument,  whenever $E^n_\gamma (\alpha t,  t)  > 2 M^*_0$,  we have $\frac{d}{dt} E^n_{\gamma} < 0$,  and so $E^n_\gamma (\alpha t,  t) $ decreases in $t$. 
 Since at initial time the partial sum is less that the threshold, i.e. $E^n_\gamma(0,0)=m_0 <  2M^*_0$  and since it is continuous for all  times,  we have that the {\it strict} inequality $E^n_\gamma (\alpha t,t)  \leq 2 M^*_0 < 4  M^*_0$  holds uniformly on the { closed} interval $[0, T^*_n]$. By continuity of the partial sum, this strict inequality $E^n_\gamma(\alpha t, t) < 4 M^*_0$ then holds on a slightly larger interval, which would contradict maximality of $T^*_n$ from the definition \eqref{T_ng}, unless $T^*_n=1$. Hence, we conclude that  $T^*_n=1$ for all $n$.

Therefore, we in fact have that
\begin{align*}
E^n_\gamma(\alpha t, t) < 4M^*_0, \quad \mbox{for all} \; t\in [0, 1] \;\; \mbox{for all} \, \, n \in \N.
\end{align*}
Thus, by letting $n \rightarrow +\infty$, we conclude that 
$E^\infty_\gamma(\alpha t, t) < 4M^*_0$  for all $t\in [0,1]$. That is, 
\begin{equation}\label{final-gen1}
	\displaystyle\int_{\R^d}  f(t,v) \;\; \cE_{2/\gamma} ((\alpha t)^{2/\gamma} \, \bra v \ket^2 ) \; dv \,
		    < 4M_0, \quad \mbox{for all} \,\, t \in [0,1].
\end{equation}

To finalize the proof, first set   $\alpha=\min\{ \ln 2,  \alpha_1 \}$, from \eqref{gen-s1} and with $\alpha_1$  satisfying condition \eqref{g-alpha}  that depends on the initial data, $\gamma$, the collisional kernel and  $A_2$ from the integrability condition \eqref{eq-nc}. This $\alpha$ is a positive and bounded real number.
 
Then, note that the above inequality implies that at the time $t=1$, the Mittag-Leffler moment  of order $\gamma$ and rate $\alpha t = \alpha$ is finite. Now, starting the argument from $t=1$ on, we bring ourselves into the setting of the propagation and conclude that for $t \ge 1$, the  Mittag-Leffler moment of the same order $\gamma$ and potentially smaller $\alpha$ than the one found on time interval $[0,1]$, remain uniformly bounded for all $t \ge 1$.

In conclusion,
\begin{align}
\displaystyle\int_{\R^d}  f(t,v) \;\; \cE_{2/\gamma} ((\alpha t)^{2/\gamma} \, \bra v \ket^2 ) \; dv \,
		    < C, \quad \mbox{for all} \,\, t \in [0,1],
\end{align}
and 
\begin{align}
\displaystyle\int_{\R^d}  f(t,v) \;\; \cE_{2/\gamma} (\alpha^{2/\gamma} \, \bra v \ket^2 ) \; dv \,
		    < C, \quad \mbox{for all} \,\, t  \geq 1.
\end{align}
Therefore, we conclude that for all  $t \geq 0$, we have
\begin{align}
\displaystyle\int_{\R^d}  f(t,v) \;\; \cE_{2/\gamma} ((\alpha \min\{1, t\})^{2/\gamma} \, \bra v \ket^2 ) \; dv \,   < C.
\end{align}

 In particular, this asserts that the solution of the Boltzmann equation with an initial mass and energy, will develop Mittag-Leffler moments,  or equivalently, exponential high energy tails of order $\gamma$ with     rate 
 $r(t)=\alpha \min\{t,1\}$. Therefore the proof of Theorem~\ref{thm} is now complete. 

\end{proof}


\appendix
\section{}\label{appx}

We gather technical results used throughout this manuscript.  The first two lemmas focus on elementary polynomial inequalities that will be used to derive ordinary differential inequalities for polynomial moments in Section \ref{sec-ode}.

 \begin{lemma}[Polynomial inequality I]\label{trick}
  Let $b \leq a \leq \frac{s}{2}$. Then for any $x, y \geq 0$
  \begin{equation}\label{eq-trick}
   x^a y^{s-a}  + x^{s-a} y^a  \; \leq \;  x^b y^{s-b}  +  x^{s-b} y^b.
  \end{equation}
 \end{lemma}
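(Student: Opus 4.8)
The plan is to reduce \eqref{eq-trick} to a one-variable inequality and then dispatch it by a short factorization. First, by the symmetry of \eqref{eq-trick} under $x\leftrightarrow y$ I may assume $x\ge y\ge 0$; the degenerate cases $y=0$ and $x=y$ are immediate (in the latter both sides equal $2x^{s}$), so from now on $x\ge y>0$. Dividing \eqref{eq-trick} through by $y^{s}$ and writing $r:=x/y\ge 1$, the claim becomes
\begin{equation*}
  r^{a}+r^{s-a}\ \le\ r^{b}+r^{s-b}.
\end{equation*}

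The key step is the elementary identity
\begin{equation*}
  r^{a}+r^{s-a}-r^{b}-r^{s-b}
   \ =\ \bigl(r^{a-b}-1\bigr)\bigl(r^{b}-r^{s-a}\bigr),
\end{equation*}
obtained by grouping $r^{a}-r^{b}=r^{b}\bigl(r^{a-b}-1\bigr)$ together with $r^{s-a}-r^{s-b}=-\,r^{s-a}\bigl(r^{a-b}-1\bigr)$. It then remains to check that the two factors on the right have opposite sign: since $r\ge 1$ and $a\ge b$ we have $r^{a-b}-1\ge 0$; and since the hypothesis $a\le s/2$ forces $b\le a\le s-a$, using $r\ge 1$ once more gives $r^{b}\le r^{s-a}$, i.e. $r^{b}-r^{s-a}\le 0$. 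Hence the product is $\le 0$, which is exactly the displayed inequality, and \eqref{eq-trick} follows.

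Equivalently — and this is the viewpoint I would keep in mind as the conceptual content — the function $\lambda\mapsto r^{\lambda}+r^{s-\lambda}$ is non-increasing on $[0,s/2]$ for every fixed $r\ge 1$, since its derivative $(\ln r)\bigl(r^{\lambda}-r^{s-\lambda}\bigr)$ is $\le 0$ there; applying this with $\lambda=a$ and $\lambda=b$ recovers the estimate. There is no genuine obstacle in this lemma: the only points requiring a word of care are the initial symmetrization and disposing of the degenerate cases before dividing by $y^{s}$.
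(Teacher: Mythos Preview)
Your proof is correct and is essentially the same factorization as the paper's: the paper writes the difference of the two sides directly as $(y^{a-b}-x^{a-b})\,x^{b}y^{b}\,(y^{s-a-b}-x^{s-a-b})\ge 0$, which after your normalization $r=x/y$ becomes precisely your identity $(r^{a-b}-1)(r^{b}-r^{s-a})\le 0$. The only cosmetic difference is that the paper's two-variable factorization is symmetric in $x,y$ and so avoids the preliminary reduction and the $y=0$ case.
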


\begin{remark} This lemma is useful for comparing products of moments. Namely, as its consequence, we have that for a fixed $s$, the sequence $\{m_{k} \; m_{s-k}\}_k$ is decreasing in $k$, for $k = 1, 2, ... , \lfloor s/2 \rfloor:=$ Integer Part of $s/2$.   For example, if $s\geq 4$, then $m_2 m_{s-2} \leq m_1 m_{s-1}$.
\end{remark}
\begin{prf}
Note that $a, b$ and $s$ satisfy $a-b \geq 0$ and $s-a-b \geq 0$. Therefore
\begin{equation*}
	\left( y^{a-b} - x^{a-b} \right) x^b y^b \left( y^{s-a-b} - x^{s-a-b} \right) \geq 0,
\end{equation*}
which is easily checked to be equivalent to the inequality \eqref{eq-trick}.
\end{prf}

\begin{lemma}[Polynomial inequality II, Lemma 2 in \cite{bogapa04}]\label{lm-comb}
Assume $p>1$, and let $k_p = \lfloor (p+1)/2 \rfloor$. Then for all $x, y >0$ the following inequalities hold
\eqnn
	\sum_{k=1}^{k_p -1} \binom{p}{k} (x^k y^{p-k} \,+\, x^{p-k} y^k) 
	\, \leq \, 
	(x+y)^p \, - \, x^p \, - \, y^p
	\, \leq \, 
	\sum_{k=1}^{k_p} \binom{p}{k} (x^k y^{p-k} \, + \, x^{p-k} y^k). 
\eeqnn
\end{lemma}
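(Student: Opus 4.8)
Here is a plan for proving Lemma~\ref{lm-comb}.

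\textbf{Reduction.} Since every term is homogeneous of degree $p$ and both inequalities are symmetric in $x,y$, it suffices to set $y=1$, $x=z$ and prove, for $z\in(0,1]$,
\[
\sum_{k=1}^{k_p-1}\binom pk\bigl(z^k+z^{p-k}\bigr)
\ \le\ (1+z)^p-1-z^p\ \le\
\sum_{k=1}^{k_p}\binom pk\bigl(z^k+z^{p-k}\bigr).
\]
For $p\in\N$ this is the finite binomial theorem together with $\binom pk=\binom p{p-k}$ and $\lceil p/2\rceil\le k_p$: reindexing $k\mapsto p-k$ on the terms with $k>k_p$ in $(x+y)^p-x^p-y^p=\sum_{k=1}^{p-1}\binom pk x^ky^{p-k}$ displays the upper bound minus $(1+z)^p-1-z^p$, and $(1+z)^p-1-z^p$ minus the lower bound, as manifestly nonnegative (using $p\le 2k_p$). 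For $p\in(1,2)$ one has $k_p=1$; the left inequality is superadditivity $(1+z)^p\ge 1+z^p$, and the right one follows from $(1+z)^p-1=\int_0^z p(1+t)^{p-1}dt\le\int_0^z p(1+t)\,dt=p\bigl(z+\tfrac{z^2}{2}\bigr)\le p(z+z^{p-1})$, using $z\le 1$ and $p<3$. Henceforth I would take $p>2$ non-integer.

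\textbf{Binomial series and coefficient facts.} For such $p$ and $z\in(0,1]$, $(1+z)^p=\sum_{n\ge 0}\binom pn z^n$ converges absolutely, so $(1+z)^p-1-z^p=\sum_{n\ge1}\binom pn z^n-z^p$. I would use: (i) $\binom pn>0$ for $0\le n\le\lceil p\rceil$, while for $n\ge\lceil p\rceil+1$ the signs of $\binom pn$ alternate; (ii) for $z\le 1$ the numbers $|\binom pn|z^n$ strictly decrease once $n\ge\lceil p\rceil$, since $|\binom p{n+1}|/|\binom pn|=(n-p)/(n+1)<1$. Hence any tail $\sum_{n\ge N}\binom pn z^n$ with $N\ge\lceil p\rceil+1$ is alternating with decreasing terms, so it has the sign of its leading term and is dominated in modulus by it; in particular $\sum_{n\ge\lceil p\rceil+1}\binom pn z^n\le 0$ while $\sum_{n\ge\lceil p\rceil+1}\binom pn z^n\ge\binom p{\lceil p\rceil+1}z^{\lceil p\rceil+1}$. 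I would also record $\binom p{\lceil p\rceil}<1$ and $\binom p{\lfloor p\rfloor}\ge 1$, both because $q\mapsto\binom qn$ increases on $[n-1,n]$ with value $1$ at $q=n$.

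\textbf{The two bounds by term matching.} Subtracting the first $k_p$ (resp. $k_p-1$) terms of the series, the upper bound is equivalent to $\sum_{n\ge k_p+1}\binom pn z^n\le z^p+\sum_{k=1}^{k_p}\binom pk z^{p-k}$ and the lower to $\sum_{n\ge k_p}\binom pn z^n\ge z^p+\sum_{k=1}^{k_p-1}\binom pk z^{p-k}$. For the upper bound, (i)--(ii) give $\sum_{n\ge k_p+1}\binom pn z^n\le\sum_{n=k_p+1}^{\lceil p\rceil}\binom pn z^n$; all exponents here lie in $[k_p+1,\lceil p\rceil]$ and exceed $p-k_p$, because $k_p>(p-1)/2$ forces $p<2k_p+1$, so $z\le 1$ lets me dominate this finite positive sum termwise by the terms $z^p,\binom p{k_p}z^{p-k_p},\binom p{k_p-1}z^{p-k_p+1},\dots$ on the right (match the highest exponent $\lceil p\rceil$ with $z^p$, using $\binom p{\lceil p\rceil}<1$; match exponent $k_p+j$ with $\binom p{k_p+1-j}z^{p-k_p-1+j}$). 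For the lower bound I would first absorb the negative correction: $\binom p{\lceil p\rceil}z^{\lceil p\rceil}+\binom p{\lceil p\rceil+1}z^{\lceil p\rceil+1}\ge\tfrac12\binom p{\lceil p\rceil}z^{\lceil p\rceil}\ge 0$ since $|\binom p{\lceil p\rceil+1}/\binom p{\lceil p\rceil}|<1$, so that $\sum_{n\ge k_p}\binom pn z^n\ge\sum_{n=k_p}^{\lfloor p\rfloor}\binom pn z^n$, and then match each right-hand term to a distinct left-hand term of strictly smaller exponent and not-smaller coefficient (here $z^p$ pairs with $\binom p{\lfloor p\rfloor}z^{\lfloor p\rfloor}$, using $\binom p{\lfloor p\rfloor}\ge 1$).

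\textbf{The coefficient inequalities, and the main obstacle.} Every coefficient comparison above has the shape $\binom pa\ge\binom pb$ with $a<b$, $a+b$ fixed, $a,b\le\lceil p\rceil$; for these I would write $\log\binom pb-\log\binom pa=\sum_{i=a}^{b-1}\log\tfrac{p-i}{i+1}$ and pair the index $i$ with $a+b-1-i$, so each pair contributes $\log\tfrac{(p-c)^2-t^2}{(c+1)^2-t^2}$ with $2c=a+b-1$ and $t$ the distance to the center; its sign equals that of $(p-c)^2-(c+1)^2$, and a short check of the finitely many values of $c$ that occur (which depend on the parity of $\lfloor p\rfloor$) gives the claim once one verifies the radicands stay positive on the relevant range of $t$. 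The delicate part -- what I expect to be the main obstacle -- is exactly this organization of the non-integer case: tracking the parity of $\lfloor p\rfloor$ (which decides whether the left-hand sums carry a spare positive term to absorb the negative tail), choosing the matchings so that exponent and coefficient always move in the favorable direction, and using only the \emph{sign}, never the size, of the remote alternating tail of the binomial series. Modulo that bookkeeping, everything reduces to the elementary estimates collected above.
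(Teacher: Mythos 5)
The paper does not prove this lemma itself; it cites it verbatim as ``Lemma 2 in \cite{bogapa04}''. So there is no in-paper proof to compare your plan against. Evaluating your outline on its own: the overall route --- reduce by homogeneity and symmetry to $y=1$, $x=z\le 1$; dispose of integer $p$ by the finite binomial theorem; handle $p\in(1,2)$ by $(1+z)^p\ge 1+z^p$ and a one-line integral bound; for non-integer $p>2$ use the binomial series, the sign pattern and alternating-tail bound of generalized binomial coefficients, and a term-by-term matching of exponents and coefficients --- is sound, and I checked the load-bearing claims. In particular the exponent inequalities rest on $p<2k_p+1$ and $\lfloor p\rfloor\ge 2k_p-1$ (both of which hold for non-integer $p$), the coefficient anchors $\binom p{\lceil p\rceil}<1$ and $\binom p{\lfloor p\rfloor}\ge 1$ are correct, and your log-pairing trick does reduce each residual coefficient comparison to the sign of $(p-c)^2-(c+1)^2$, which is negative for $c=k_p$ (upper bound, giving $\binom p{k_p+j}\le\binom p{k_p+1-j}$) and positive for $c=(\lfloor p\rfloor-1)/2$ (lower bound, giving $\binom p{k}\le\binom p{\lfloor p\rfloor-k}$), exactly the directions you need.

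Two caveats. First, the submission is explicitly a plan, not a proof: the final paragraph ends with ``a short check\ldots gives the claim once one verifies\ldots'' and ``Modulo that bookkeeping\ldots,'' so the decisive coefficient inequalities and the parity-of-$\lfloor p\rfloor$ cases are identified but not actually carried out. Second, the blanket sentence ``every coefficient comparison above has the shape $\binom pa\ge \binom pb$ with $a<b$'' is not right: in the lower-bound matching you need the \emph{opposite} direction, $\binom pk\le\binom p{\lfloor p\rfloor-k}$. Your machinery (the sign of $(p-c)^2-(c+1)^2$) correctly produces that opposite direction, so the method is not broken, but the sentence as written is misleading and would need to be fixed when writing the argument out.
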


\begin{remark}\label{rk-poly}
Using this lemma, it is easy to see a rough, but useful estimate
\begin{align}\label{rk-poly2} 
\sum_{k=0}^{k_p} \binom{p}{k} (x^k y^{p-k} \,+\, x^{p-k} y^k) 
	\, \leq \, 2 (x + y)^p.
\end{align}
\end{remark}

Next, we recall the basic definitions and  properties of Gamma  $\Gamma(x)$ and Beta $B(x,y)$ functions that are useful for the next estimates. 
They are defined via
\begin{align}
 \Gamma(x) \, = \, \int_0^\infty t^{x-1} \, e^{-t} \, dt,  \qquad \mbox{and} \qquad
 B(x,y) \, = \, \int_0^1 t^{x-1} \, (1-t)^{y-1} \, dt,
\end{align}
respectively.  Two fundamental properties of these well-know functions are 
\begin{align} \label{Gproperty}
\Gamma(x+1) = x \, \Gamma(x),\qquad \mbox{and} \qquad  B(x,y) = \frac{\Gamma(x) \, \Gamma(y)}{\Gamma(x+y)}
\end{align}

 The following classic result for estimates of generalized Laplace transforms  will be needed  to estimate the combinatoric sums of Beta functions to be shown in the subsequent Lemma~\ref{lem-sumB1}. 
 \begin{lemma}[]\label{sp}
  Let $0 < \alpha, R < \infty$, $g\in C([0,R])$ and $S \in C^1 ([0,R])$ be such that
  $S(0)=0$ and $S'(x) <0$ for all $x\in [0,R]$. Then for any $\lambda \geq 1$ we have
  \begin{equation*}   
      \displaystyle \int_0^R x^{\alpha -1} \; g(x) \; e^{\lambda S(x)} \; dx =
      \Gamma(\alpha) \; \left( \frac{1}{-\lambda \; S'(0)}\right)^\alpha \; (g(0) + o(1)).
  \end{equation*}
 \end{lemma}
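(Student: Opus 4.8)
The plan is to reduce the statement to the classical Watson lemma by a change of variables that straightens out the phase $S$. Since $S\in C^1([0,R])$ with $S(0)=0$ and $S'(x)<0$ throughout $[0,R]$, the function $-S$ is a strictly increasing $C^1$ diffeomorphism of $[0,R]$ onto $[0,U]$, where $U:=-S(R)>0$; its inverse $\psi:=(-S)^{-1}$ is $C^1$ on $[0,U]$ by the inverse function theorem, with $\psi(0)=0$ and $\psi'(u)=1/(-S'(\psi(u)))$, so in particular $\psi'(0)=1/(-S'(0))$. Substituting $u=-S(x)$ gives
\begin{equation*}
\int_0^R x^{\alpha-1}\,g(x)\,e^{\lambda S(x)}\,dx=\int_0^U \psi(u)^{\alpha-1}\,g(\psi(u))\,\psi'(u)\,e^{-\lambda u}\,du.
\end{equation*}
Writing $\psi(u)^{\alpha-1}=u^{\alpha-1}(\psi(u)/u)^{\alpha-1}$ and setting
\begin{equation*}
G(u):=\left(\frac{\psi(u)}{u}\right)^{\alpha-1}g(\psi(u))\,\psi'(u)\quad(0<u\le U),\qquad G(0):=\left(\frac{1}{-S'(0)}\right)^{\alpha}g(0),
\end{equation*}
the factor $G$ is continuous on $[0,U]$: indeed $\psi(u)/u\to\psi'(0)=1/(-S'(0))$ and $\psi'(u)\to\psi'(0)$ as $u\to0^+$, while $g\circ\psi$ is continuous, so $G(u)\to(\psi'(0))^{\alpha-1}g(0)\psi'(0)=G(0)$. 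Hence the integral equals $\int_0^U u^{\alpha-1}G(u)e^{-\lambda u}\,du$ with $G\in C([0,U])$.

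Next I would establish the standard asymptotics $\int_0^U u^{\alpha-1}G(u)e^{-\lambda u}\,du=G(0)\,\Gamma(\alpha)\,\lambda^{-\alpha}+o(\lambda^{-\alpha})$ as $\lambda\to\infty$. Fix $\e>0$; by continuity of $G$ at $0$ choose $\delta\in(0,U]$ such that $|G(u)-G(0)|<\e$ for all $u\in[0,\delta]$, and split the integral at $\delta$. On $[\delta,U]$ the integrand is bounded by $Ce^{-\lambda\delta}$ with $C:=(U-\delta)\sup_{[\delta,U]}u^{\alpha-1}|G(u)|$, so this piece is $O(e^{-\lambda\delta})$, which is $o(\lambda^{-\alpha})$. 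On $[0,\delta]$ write $G=G(0)+(G-G(0))$: the term $G(0)\int_0^\delta u^{\alpha-1}e^{-\lambda u}\,du$ equals $G(0)\Gamma(\alpha)\lambda^{-\alpha}$ minus the tail $G(0)\int_\delta^\infty u^{\alpha-1}e^{-\lambda u}\,du=O(e^{-\lambda\delta})$, while the remaining term is bounded in absolute value by $\e\int_0^\infty u^{\alpha-1}e^{-\lambda u}\,du=\e\,\Gamma(\alpha)\lambda^{-\alpha}$. Multiplying by $\lambda^{\alpha}$, letting $\lambda\to\infty$ and then $\e\to0$ yields the claim.

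Combining the two steps,
\begin{equation*}
\int_0^R x^{\alpha-1}\,g(x)\,e^{\lambda S(x)}\,dx=\left(\frac{1}{-S'(0)}\right)^{\alpha}g(0)\,\Gamma(\alpha)\,\lambda^{-\alpha}+o(\lambda^{-\alpha})=\Gamma(\alpha)\left(\frac{1}{-\lambda S'(0)}\right)^{\alpha}\bigl(g(0)+o(1)\bigr)
\end{equation*}
as $\lambda\to\infty$, which is the assertion (the hypothesis $\lambda\ge1$ only serves to pin down a definite range of $\lambda$). The single delicate point is the change of variables in the first step, specifically that $G$ extends continuously across $u=0$: this is exactly where $S\in C^1$ together with $S'(0)\neq0$ — guaranteed by $S'<0$ on the \emph{closed} interval — is used, through the inverse function theorem, which yields $\psi\in C^1$ with $\psi'(0)=1/(-S'(0))$. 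Everything afterward is the textbook Laplace-method estimate, resting on the fact that the contributions away from $u=0$ decay exponentially in $\lambda$ and hence beat any negative power of $\lambda$.
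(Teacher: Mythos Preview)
Your proof is correct. The paper itself does not prove this lemma at all: it simply cites Olver's textbook on asymptotic expansions (\emph{Asymptotics and Special Functions}, Theorem 7.1 on page 81) and states that the result is a direct application of Laplace's method. Your argument --- straightening the phase via the $C^1$ diffeomorphism $u=-S(x)$ and then applying the elementary Watson-lemma splitting --- is precisely the standard derivation one finds in such references, so in substance you are supplying the proof the paper defers to the literature. One minor remark: your claim that $\int_\delta^\infty u^{\alpha-1}e^{-\lambda u}\,du=O(e^{-\lambda\delta})$ is slightly imprecise when $\alpha>1$ (there is a polynomial prefactor in $\lambda$), but the conclusion $o(\lambda^{-\alpha})$ that you actually need follows immediately from, e.g., $\int_\delta^\infty u^{\alpha-1}e^{-\lambda u}\,du=\lambda^{-\alpha}\int_{\lambda\delta}^\infty v^{\alpha-1}e^{-v}\,dv$, so this does not affect the argument.
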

The proof of this estimate is a direct application of the Laplace's method for asymptotic expansion of integrals that can be found in \cite{ol74}, page 81, Theorem 7.1.

\noindent The next two lemmas estimate  a combinatoric sum  of Beta functions. These estimates are inspired by the work in
Lemma 4 in \cite{bogapa04} and Lemma 3.3  in \cite{lumo12}. However, in our context, the arguments of Beta functions are shifted, so we compute exact decay rates for our situation. These estimates are crucial  to control the growth in $q$ of the ordinary differential inequality of partial sums of renormalized moments.

 The first lemma will be used for the proof of propagation of moments with $a=2/s$, while the second will be used for the generation of moments with $s = \gamma$.

 \begin{lemma}[First estimate on combinatoric sums of Beta Functions ] \label{lem-sumB1} 
  Let $q \geq 3$ and $k_q = \left[(q+1)/2\right]$. Then for any $a>1$ we have
  \begin{equation}\label{beta}
     \displaystyle \sum_{k=1}^{k_q} \displaystyle \binom{q - 2}{k-1}  B(ak+1, a(q-k)+1)
	\leq C_a \; \frac{1}{(aq)^{1+a}},
  \end{equation}
  
  where the constant $C_a$ depends only on $a$.
  
 \end{lemma}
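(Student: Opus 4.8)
The plan is to convert the combinatoric sum into a single one–dimensional integral by combining the integral representation of the Beta function with the binomial theorem, and then to read off its precise polynomial decay from the Laplace-type asymptotic in Lemma~\ref{sp}.

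Since every summand is nonnegative, it suffices to bound the full sum
$$\widetilde{S}_q \, := \, \sum_{k=1}^{q-1} \binom{q-2}{k-1}\, B(ak+1,\, a(q-k)+1),$$
which dominates the left-hand side of \eqref{beta}. Using $B(ak+1,a(q-k)+1)=\int_0^1 x^{ak}(1-x)^{a(q-k)}\,dx$ and substituting $j=k-1$, one factors $x^a(1-x)^a$ out of each integrand, so that $a(q-1-j)=a(q-2-j)+a$, and then the binomial theorem $\sum_{j=0}^{q-2}\binom{q-2}{j}x^{aj}(1-x)^{a(q-2-j)}=\big(x^a+(1-x)^a\big)^{q-2}$ yields the closed form
$$\widetilde{S}_q \, = \, \int_0^1 x^a(1-x)^a \,\big(x^a+(1-x)^a\big)^{q-2}\,dx .$$

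Next I would analyze the phase $\phi(x):=x^a+(1-x)^a$. For $a>1$ one has $\phi''(x)=a(a-1)\big(x^{a-2}+(1-x)^{a-2}\big)>0$, so $\phi$ is strictly convex on $[0,1]$ with $\phi(0)=\phi(1)=1$ and a unique interior minimum at $x=\tfrac12$; hence $\phi<1$ on $(0,1)$, and $\phi^{q-2}$ concentrates near the two endpoints as $q\to\infty$. Because the integrand is invariant under $x\mapsto 1-x$, $\widetilde{S}_q=2\int_0^{1/2}x^a(1-x)^a\phi(x)^{q-2}\,dx$. Splitting $[0,\tfrac12]=[0,\tfrac14]\cup[\tfrac14,\tfrac12]$, the tail piece is at most $\phi(\tfrac14)^{q-2}\int_{1/4}^{1/2}x^a(1-x)^a\,dx$, which is exponentially small in $q$ since $\phi(\tfrac14)<1$. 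On $[0,\tfrac14]$ I would apply Lemma~\ref{sp} with $S(x)=\log\phi(x)$, $g(x)=(1-x)^a$, $\alpha=a+1$ and $\lambda=q-2\ge1$: indeed $S(0)=0$, and since $\phi'(0)=-a$ for $a>1$ we get $S'(0)=-a<0$, with $S'<0$ throughout $[0,\tfrac14]$ because $\phi'$ vanishes only at $\tfrac12$. This gives $\int_0^{1/4}x^a(1-x)^a\phi(x)^{q-2}\,dx=\Gamma(a+1)\big(a(q-2)\big)^{-(a+1)}(1+o(1))$ as $q\to\infty$. Combining the two pieces and using $q-2\ge q/3$ (so $(q/(q-2))^{a+1}\le 3^{a+1}$) yields $\widetilde{S}_q\le C_a(aq)^{-(1+a)}$ for all sufficiently large $q$; enlarging $C_a$ to absorb the finitely many remaining values $3\le q<q_0(a)$ (where $\widetilde S_q$ is a fixed finite quantity while $(aq)^{-(1+a)}$ is bounded below) completes the proof.

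The delicate points are exactly the verification of the hypotheses of Lemma~\ref{sp}: that $\phi\in C^1$ near $0$ with $\phi'(0)=-a$ — which genuinely requires $a>1$, since otherwise $x^{a-1}$ does not vanish at $x=0$ — and that $S'$ remains strictly negative on the chosen interval; together with careful bookkeeping that the resulting constant depends only on $a$ (through $\Gamma(a+1)$, through $3^{a+1}$, through $\phi(\tfrac14)<1$, and through the finitely many small-$q$ cases). The restriction $a>1$ is essential: at $a=1$ the phase degenerates, $\phi\equiv1$, and $\widetilde{S}_q=\int_0^1 x(1-x)\,dx=\tfrac16$ does not decay, consistent with \eqref{beta} failing.
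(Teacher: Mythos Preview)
Your proof is correct and follows essentially the same route as the paper: both reduce the sum to the integral $\int_0^1 x^a(1-x)^a\big(x^a+(1-x)^a\big)^{q-2}\,dx$ and then apply Lemma~\ref{sp} with $S(x)=\log\big(x^a+(1-x)^a\big)$. The only differences are cosmetic --- the paper reaches the integral via the polynomial inequality \eqref{rk-poly2} applied to the partial sum, whereas you extend to the full binomial sum and use the binomial theorem as an equality; and you are slightly more careful than the paper in splitting off $[\tfrac14,\tfrac12]$ so that $S'<0$ is strictly satisfied on the interval where Lemma~\ref{sp} is invoked.
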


\begin{prf}  Reindexing the summation from $k=1$ to $k=0$ by changing $k-1$ into $k$  and rearranging the integral forms defining  Beta functions, yields
\begin{align*}
 \displaystyle  & \sum_{k=1}^{k_q} \displaystyle \binom{q - 2}{k-1}  B(ak+1, a(q-k)+1)
	\\
	& = \displaystyle \sum_{k=0}^{k_q-1}   \binom{q - 2}{k}  \; B(a(k+1)+1, a(q-k-1)+1) \\
    & =  \displaystyle\frac{1}{2} \displaystyle \int_0^1 
           \displaystyle \sum_{k=0}^{k_q-1}  \binom{q - 2}{k} 
           \left( x^{a(k+1)} \; (1-x)^{a(q - k - 1)} \; + \; x^{a(q-k-1)} \; (1-x)^{a(k+1)} \right) \; dx \\
    & = \displaystyle\frac{1}{2} \displaystyle \int_0^1 x^a (1-x)^a 
          \displaystyle \sum_{k=0}^{k_{q-2}} \binom{q-2}{k}  
           \left( x^{ak} \; (1-x)^{a(q - 2 - k)} \; + \; x^{a(q-2-k)} \; (1-x)^{ak} \right) \; dx \\
           & = \displaystyle\frac{1}{2} \displaystyle \int_0^1 x^a (1-x)^a \displaystyle \sum_{k=0}^{k_p} \binom{p}{k}  
           \left( x^{ak} \; (1-x)^{a(p - k )} \; + \; x^{a(p-k)} \; (1-x)^{ak} \right) \; dx 
           \end{align*}
after setting $q-2 = p$ in the last integral. The estimate  \eqref{rk-poly2}, then yields
\begin{align*}
 \displaystyle   \sum_{k=1}^{k_q} \displaystyle \binom{q - 2}{k-1} & B(ak+1, a(q-k)+1) \\
 &\leq 
  \displaystyle \frac{1}{2} \displaystyle \int_0^1 x^a (1-x)^a  \; 2\; \left(x^a + (1-x)^a \right)^{p} \; dx \\
  &=
\displaystyle \int_0^1 x^a (1-x)^a \left(x^a + (1-x)^a \right)^{q-2} \; dx \\
     &= 2 \displaystyle \int_0^{1/2} x^a \; g(x) \; e^{q S(x)} \; dx,
\end{align*}
 where $g(x) = (1-x)^a \left( x^a + (1-x)^a \right)^{-2}$ and $S(x) = \log(x^a + (1-x)^a)$, for $x \in [0, 1/2]$.
Finally, applying  Lemma \ref{sp} for  these $g(x)$ and $S(x)$ as indicated, and noting that  $g(0) = 1$ and $S'(0) =-a$,  yields the desired estimate
\begin{equation}\label{Beta1}
 \displaystyle  \sum_{k=1}^{k_q} \displaystyle \binom{q - 2}{k-1}  B(ak+1, a(q-k)+1) \; \leq \;  C_a \; \Gamma(a+1) \; \left( \frac{1}{a q} \right)^{a+1}.
\end{equation}
\end{prf}

 \begin{lemma}[Second estimate on combinatoric sums of Beta Functions] \label{lem-sumB2}
 Let $0<s\leq 1$ and $q\geq 3$. Then, there exits a constant $C$, independent on $q$, such that
  \begin{equation}\label{lem-sumB2-0}
   \displaystyle \sum_{k=1}^{ 1 + k_{\frac{q}{2}-\frac{2}{s}} } \;
     \binom{\frac{q}{2}-\frac{2}{s} }{k-1} \; B(2k+1, \; q - 2k +1) \leq C \frac{1}{q^3}.
 \end{equation}  
 \end{lemma}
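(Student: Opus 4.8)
The plan is to follow the proof of Lemma~\ref{lem-sumB1} almost verbatim, the only genuinely new feature being the asymmetric shift in the arguments of the Beta functions, which I will absorb by pulling out a fixed power of $(1-t)$. Throughout set $p := \tfrac q2 - \tfrac2s$, and note that $q-2k-2 = 2(p-k) + (\tfrac4s-2)$ with $\tfrac4s-2 \ge 2 > 0$ since $s\le 1$.

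First I would reindex the sum by $k\mapsto k+1$, turning the left-hand side of \eqref{lem-sumB2-0} into $\sum_{k=0}^{k_p}\binom{p}{k}B(2k+3,\,q-2k-1)$. Writing each Beta function as $B(2k+3,q-2k-1) = \int_0^1 t^{2k+2}(1-t)^{q-2k-2}\,dt$ and factoring $t^{2k+2}(1-t)^{q-2k-2} = t^2(1-t)^{\tfrac4s-2}\cdot t^{2k}(1-t)^{2(p-k)}$, I can pull the $k$-independent weight $t^2(1-t)^{\tfrac4s-2}$ out of the (finite) summation, obtaining $\int_0^1 t^2(1-t)^{\tfrac4s-2}\big(\sum_{k=0}^{k_p}\binom{p}{k}t^{2k}(1-t)^{2(p-k)}\big)\,dt$.

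Next, for $q \ge \tfrac8s$ — exactly the range in which $p \ge \tfrac2s \ge 2 > 1$, so that the hypothesis $p>1$ behind Remark~\ref{rk-poly} holds — I would bound the inner $k$-sum via \eqref{rk-poly2} applied with $x=t^2$, $y=(1-t)^2$, giving $\sum_{k=0}^{k_p}\binom{p}{k}t^{2k}(1-t)^{2(p-k)} \le 2\big(t^2+(1-t)^2\big)^p$. Splitting $\int_0^1 = \int_0^{1/2}+\int_{1/2}^1$ and using the elementary inequalities $t^2+(1-t)^2 = 1-2t(1-t) \le 1-t \le e^{-t}$ on $[0,\tfrac12]$ together with $(1-t)^{\tfrac4s-2}\le 1$, the first piece reduces to $\int_0^\infty t^2 e^{-pt}\,dt = 2p^{-3}$, and the second (after $u=1-t$, using $(1-u)^2\le 1$) to $\int_0^\infty u^{\tfrac4s-2}e^{-pu}\,du = \Gamma(\tfrac4s-1)p^{-(\tfrac4s-1)} \le \Gamma(\tfrac4s-1)p^{-3}$, the last step using $p\ge 1$ and $\tfrac4s-1\ge 3$ (again $s\le 1$). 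Since $p = \tfrac q2 - \tfrac2s \ge \tfrac q4$ on this range, this yields the claimed $O(q^{-3})$ bound with a constant depending only on $s$. If one wanted the sharp leading constant instead, one could feed $\int_0^{1/4}t^2(1-t)^{\tfrac4s-2}e^{pS(t)}\,dt$ with $S(t)=\log(t^2+(1-t)^2)$ into Lemma~\ref{sp} exactly as in Lemma~\ref{lem-sumB1}, treating $[\tfrac14,\tfrac12]$ and $[\tfrac12,1]$ as exponentially small and $O(p^{-3})$ remainders respectively; but the elementary route already suffices here.

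It remains to handle the finitely many integers $q$ with $3\le q < \tfrac8s$: for these the left-hand side of \eqref{lem-sumB2-0} is a finite sum of finite Beta functions — finite because every exponent satisfies $q-2k-1 \ge q-(p+1)-1 = \tfrac q2+\tfrac2s-2 > 0$ when $s\le 1$ — hence bounded by a constant $\tilde C(s)$, which is absorbed into a bound of the form $\tilde C(s)(8/s)^3 q^{-3}$ since $q^3 < (8/s)^3$ there; taking $C$ to be the larger of the two constants finishes the proof. I do not expect a real obstacle: the only points needing care are (i) phrasing the final bound in terms of $q$ rather than $p$, which is legitimate because $s$ (hence $p\sim q/2$) is fixed and $C$ is allowed to depend on $s$, and (ii) checking that the range $q\ge 8/s$ on which the integral estimate is run is precisely where $p>1$ holds, so that the complementary finitely many $q$'s can be disposed of by hand — this is also why I prefer the fully elementary estimate, which sidesteps the mild nuisance (present already in Lemma~\ref{lem-sumB1}) that the Laplace endpoint $t=\tfrac12$ has $S'(\tfrac12)=0$.
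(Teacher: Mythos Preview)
Your proof is correct, but the paper takes a much shorter route. The paper observes that since $s\le 1$ gives $\tfrac{q}{2}-\tfrac{2}{s}\le \tfrac{q}{2}-2$, monotonicity of the generalized binomial coefficient in its upper argument yields
\[
\binom{\tfrac{q}{2}-\tfrac{2}{s}}{k-1}\ \le\ \binom{\tfrac{q}{2}-2}{k-1},
\]
and after enlarging the upper summation index accordingly the resulting sum is \emph{exactly} the left-hand side of Lemma~\ref{lem-sumB1} with $a=2$ and $q$ replaced by $q/2$. One line of bookkeeping plus a direct citation of \eqref{beta} then gives the bound $C_2\,q^{-3}$, with a constant independent of $s$.

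Your approach instead keeps the original binomial parameter $p=\tfrac{q}{2}-\tfrac{2}{s}$, absorbs the asymmetry into the weight $(1-t)^{4/s-2}$, and redoes the integral estimate from scratch with elementary exponential bounds in place of Laplace's method. This is longer and produces an $s$-dependent constant (which the lemma permits), but it has two mild advantages: it sidesteps any appeal to Lemma~\ref{sp}, and it makes the small-$q$ range and the $p>1$ hypothesis of Remark~\ref{rk-poly} fully explicit. The paper's reduction is slicker and sharper; your direct estimate is more self-contained.
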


\begin{prf}
 \noindent First we note a simple property of binomial coefficients. For any integer $k \in \N_0$ and
 any real numbers $\tilde{a}, a \in \R$ that satisfy  $\tilde{a} \geq a \geq k$, 
 \begin{equation}\label{binom}
  \binom{a}{k} \leq \binom{\tilde{a}}{k}.
 \end{equation}
 \noindent This is easily proved by noting that the binomial coefficient $\binom{a}{k}$ 
 (and similarly $\binom{\tilde{a}}{k}$) can be computed as
 \begin{equation*}\label{binom2}
  \binom{a}{k} = \frac{a \  (a-1) \  (a-2)  \dots (a-k+1)}{k!}.
 \end{equation*}
 \noindent Next, since $s\leq 1$, 
 \begin{equation} \label{qs}
  \frac{q}{2}-\frac{2}{s} \leq \frac{q}{2} - 2. 
 \end{equation}
Therefore,
 \begin{align}\label{binom3}
   & \displaystyle \sum_{k=1}^{1 + k_{\frac{q}{2}-\frac{2}{s}} } \;
     \binom{\frac{q}{2}-\frac{2}{s} }{k-1} \; B(2k+1, \; q - 2k +1) \nonumber\\    
	&\qquad \qquad \leq \displaystyle \sum_{k=1}^{1+k_{\frac{q}{2} -2}} \; 
		\binom{\frac{q}{2} - 2}{k-1}   \; B(2k+1, \; q - 2k +1) \\ 
	&\qquad\qquad = \displaystyle \sum_{k=1}^{k_{\frac{q}{2}}} \; 
		\binom{\frac{q}{2} - 2}{k-1}   \; B\left(2k+1, \; 2\left(\frac{q}{2} - k\right) +1\right) .  \nonumber
\end{align}
Now applying \eqref{beta} yields \eqref{lem-sumB2-0}.
\end{prf}

\section{}

Finally, for completeness we include detailed calculation of deriving the representation of energies from \eqref{ep}. Recall that
\eqnn
v' = \frac{v+v_*}{2} + \frac{1}{2} |u| \sigma.
\eeqnn 
Hence,
\begin{align*}
	\langle v' \rangle^2 
	& =  1 + \frac{|v+v_*|^2}{4} + \frac{|v - v_*|^2}{4} + \frac{1}{2} |u| \sigma \\
	& = 1 + \frac{|v|^2 + |v_*|^2}{2} + \frac{1}{2} |u| (v + v_*) \cdot \left(\hat{u} \cos\theta + \omega \sin\theta\right) \\
	& =  1 + \frac{|v|^2 + |v_*|^2}{2} + \frac{1}{2} (v+v_*) \cdot (v- v_*) \cos\theta 
		+ \frac{1}{2} |u| |V| \sin\theta (\hat{V}\cdot \omega)\\
	& = 1 + |v|^2 \cos^2 \frac{\theta}{2} + |v_*|^2 \sin^2\frac{\theta}{2} 
		+ \frac{1}{2} |u||V| \sin\theta (j \cdot \omega) \sin\alpha \\
	& =  \langle v\rangle ^2 \cos^2 \frac{\theta}{2} + \langle v_*\rangle^2 \sin^2\frac{\theta}{2} 
		+ |v \times v_*| \sin\theta (j\cdot \omega),
\end{align*}
which coincides with the representation in \eqref{ep}.

\bigskip

{\bf{Acknowledgements.}} 
This work has been supported by  NSF under grants DMS-1413064, DMS-1516228, NSF-RNMS DMS1107465 and DMS 1440140.
 Support from both the Institute of Computational Engineering and Sciences (ICES) at the University of Texas Austin  and  the Mathematical Sciences Research Institute (MSRI) in Berkeley, California, are gratefully acknowledged.

\bibliographystyle{abbrv}
\bibliography{Majabib}

\end{document}